\documentclass[a4paper]{amsart}
\usepackage[ansinew]{inputenc}
\usepackage[T1]{fontenc}
\usepackage{longtable}
\usepackage{xcolor}
\usepackage{mathrsfs}
\usepackage{amssymb,amsmath,amsthm}
\usepackage[matrix, arrow, curve]{xy}
\usepackage{tikz-cd}
\usepackage{stmaryrd}
\usepackage{mathtools}
\usepackage{extarrows}
\usepackage{hhline}
\usepackage{hyperref}
\usepackage[nameinlink]{cleveref}
\usetikzlibrary{quotes,babel, angles}

\newcommand{\fin}{\textnormal{fin}\,}
\newcommand{\Proj}{\textnormal{Proj}\,}
\newcommand{\proj}{\textnormal{proj}\,}
\newcommand{\lend}{l_{\textnormal{end}}}

\newcommand{\Ex}{\textnormal{Ex}}

\newcommand{\op}{\textnormal{op}}
\newcommand{\Hom}{\textnormal{Hom}}
\newcommand{\Ext}{\textnormal{Ext}}
\renewcommand{\mod}{\textnormal{mod}\,}

\newcommand{\Mod}{\textnormal{Mod}\,}
\newcommand{\vMod}{\textnormal{Mod}}
\newcommand{\im}{\textnormal{Im}\,}
\newcommand{\coker}{\textnormal{coker}\,}

\newcommand{\gen}{\textnormal{gen}\,}
\newcommand{\cogen}{\textnormal{cogen}\,}
\newcommand{\Ab}{\textnormal{Ab}}

\newcommand{\Ind}{\textnormal{Zg}\,}

\newcommand{\inj}{\textnormal{inj}\,}

\newcommand{\Spec}{\textnormal{Spec}}
\newcommand{\Frac}{\textnormal{Frac}}

\newcommand{\Stab}{\mathrm{Stab}}
\newcommand{\End}{\mathrm{End}}
\newcommand{\cO}{\mathcal{O}}

\numberwithin{equation}{section} \theoremstyle{plain}
\newtheorem*{thm*}{Theorem}
\newtheorem{thm}{Theorem}
\numberwithin{thm}{section}

\newtheorem{coro}[thm]{Corollary}
\newtheorem*{coro*}{Corollary}

\newtheorem*{ass*}{Assumption}
\newtheorem{lem}[thm]{Lemma}
\newtheorem*{lem*}{Lemma}
\newtheorem{prop}[thm]{Proposition}
\newtheorem*{prop*}{Proposition}
\newtheorem{rem}[thm]{Remark}
\newtheorem*{rem*}{Remark}
\newtheorem{exa}[thm]{Example}
\newtheorem*{exa*}{Example}

\newtheorem*{df*}{Definition}

\newtheorem*{ques*}{Question}

\newtheorem*{conj*}{Conjecture}

\newtheorem*{construction*}{Construction}
\newtheorem*{ack*}{ACKNOWLEDGEMENTS}

\begin{document}

\title{Constructible Subcategories and Unbounded Representation Type}
\author{Kevin Schlegel \\With an appendix by Andres Fernandez Herrero}
\date{}

\address{Kevin Schlegel, University of Stuttgart, Institute of Algebra and Number Theory, Pfaffenwaldring 57, 70569 Stuttgart, Germany}
\email{kevin.schlegel@iaz.uni-stuttgart.de}
\address{Andres Fernandez Herrero, University of Pennsylvania, Department of Mathematics,
209 South 33rd Street, Philadelphia, PA 19104, USA}
\email{andresfh@sas.upenn.edu}

\subjclass[2020]{16E30, 16G60, 03C60, 14L30}
\keywords{Brauer-Thrall conjectures, representation type, Ziegler spectrum, scheme of modules, exact structures, matrix reductions}
\begin{abstract} We show that bounded type implies finite type for a constructible subcategory of the module category of a finitely generated algebra over a field, which is a variant of the first Brauer-Thrall conjecture. A full subcategory is constructible if it consists of all modules that vanish on a finitely presented functor. Our approach makes use of the Ziegler spectrum of a ring and a connection, established in this work, with the scheme of finite dimensional modules. We also discuss a variant of the second Brauer-Thrall conjecture in this setting. It is shown to be true under additional assumptions on the algebra, but wrong in general. Lastly, it is proven that exact structures and matrix reductions give rise to constructible subcategories. Based on this, a translation from matrix reductions to reductions of exact structures is provided.    
\end{abstract}
\maketitle 

\section*{Introduction}
The Brauer-Thrall conjectures had a big influence on the development of the representation theory of finite dimensional algebras \cite{Jans}. The first Brauer-Thrall conjecture, first proven by Roiter \cite{Roiter}, states that for a finite dimensional algebra $A$ bounded representation type implies finite representation type. That is, if the dimension of all finite dimensional indecomposable $A$-modules is bounded, then there are only finitely many of them, up to isomorphism. It was shown in \cite{BG} that the statement can be extended to finite dimensional modules over a finitely generated algebra $A$ by reducing it to the finite dimensional case. The main result in this work is to prove a variant of the first Brauer-Thrall conjecture for a wide range of subcategories of the module category $\Mod A$.\\
\\
\textbf{Theorem A} (\Cref{mthm}) \textit{Let $k$ be a field, $A$ a finitely generated $k$-algebra and $\mathcal{X} \subseteq \Mod A$ a constructible subcategory. If $\mathcal{X}$ is of infinite type, then $\mathcal{X}$ is of unbounded type.}\\

A full subcategory $\mathcal{X} \subseteq \Mod A$ is \emph{constructible} if $\mathcal{X} = \{X\in \Mod A \mid F(X) = 0\}$ for a finitely presented functor $F\colon \Mod A \to \Ab$. We say that $\mathcal{X}$ is of \emph{infinite type} if there are infinitely many non-isomorphic finite dimensional indecomposable modules in $\mathcal{X}$, \emph{unbounded type} if there is no bound on the dimension of the finite dimensional indecomposable modules in $\mathcal{X}$, and \emph{strongly unbounded type} if there are natural numbers $n_1 < n_2 < \dots$ such that for all $i>0$ there are infinitely many non-isomorphic indecomposable modules of dimension $n_i$ in $\mathcal{X}$. These are properties of the full subcategory $\fin \mathcal{X} \subseteq \mathcal{X}$ of finite dimensional modules. If $A$ is finite dimensional, then the following are examples for $\fin \mathcal{X}$ with \mbox{$\mathcal{X} \subseteq \Mod A$} constructible: the category $\mod A$ of all finite dimensional $A$-modules, Hom- and Ext$^n$-orthogonals of a finite dimensional module within $\mod A$, subcategories \mbox{(co-)generated} by a finite dimensional module, the full subcategory of $\mod A$ consisting of all modules with projective (injective) dimension bounded by some $n\in \mathbb{N}$, functorially finite torsion(-free) classes in $\mod A$, and $\Delta$-good module categories for quasi-hereditary algebras. This shows the abundance of constructible subcategories.

The main obstacle in the proof of Theorem A is that a constructible subcategory $\mathcal{X} \subseteq \Mod A$ loses most of the nice homological properties of the module category. Thus, the methods of Roiter \cite{Roiter} and Auslander's approach \cite{Auslander0} do not apply. The approach in this work is of more geometric nature. We make use of the Ziegler spectrum of a ring \cite{Ziegler} and show a connection with the scheme of finite dimensional $A$-modules \cite{Morrison}, essentially reducing the problem to the case of the coordinate ring of a regular curve over $k$. The Ziegler spectrum is a topological space that appears in the context of purity, which goes back to the work of Pr\"ufer for abelian groups \cite{Pruefer}. The theory of purity gives a nice framework to understand the structure of possibly large modules. It also appears in the context of logic and model theory
of modules \cite{Prest}. An important result is the correspondence between open sets in the Ziegler spectrum, definable subcategories of $\Mod A$ and Serre subcategories of the abelian category of finitely presented functors $\Mod A \to \Ab$, see \cite{Crawley-Boevey2, Krause, Herzog}. This provides the main connection with constructible subcategories: An open set in the Ziegler spectrum is compact if and only if the corresponding definable subcategory is a constructible subcategory. The following describes the key steps of the proof of Theorem A.
\begin{itemize}
    \item \textbf{The Ziegler spectrum and schemes of modules.} For $n\in \mathbb{N}$ let $\vMod(A,n)$ denote the scheme of $n$-dimensional $A$-modules. (One may also think in terms of module varieties if the field is algebraically closed, see for example \cite{CBgeo}.) The $k$-points in $\vMod(A,n)$ correspond to $n$-dimensional $A$-modules. For a subcategory $\mathcal{X} \subseteq \Mod A$ let $\vMod(\mathcal{X},n)$ be the points in $\vMod(A,n)$ such that the corresponding $A$-modules are contained in $\mathcal{X}$. We establish a connection between the Ziegler spectrum of $A$ and the scheme  $\vMod(A,n)$, which shows that if $\mathcal{X}$ is a constructible subcategory, then $\vMod(\mathcal{X}, n) \subseteq \vMod(A,n)$ is a constructible set, that is, a finite union of locally closed sets in the Zariski topology.
    \item \textbf{Curves passing through distinct orbits.} To show Theorem A we may assume that the constructible subcategory $\mathcal{X} \subseteq \Mod A$ contains infinitely many non-isomorphic modules of some fixed dimension $n\in \mathbb{N}$ and deduce that $\mathcal{X}$ is of unbounded type. In this case $\vMod(\mathcal{X},n)$ contains infinitely many $k$-points in distinct orbits under the natural action of $GL_n(k)$. Since additionally $\vMod(\mathcal{X}, n) \subseteq \vMod(A,n)$ is a constructible set, by a result shown by Andres Fernandez Herrero in the appendix  there is a curve inside $\vMod(\mathcal{X},n)$ with the following property: There are infinitely many non-isomorphic indecomposable $A$-modules that appear as direct summands of the $A$-modules corresponding to points of the curve.
    \item \textbf{Lift to a localisation of the coordinate ring.} Let $B$ denote a finite localisation of the coordinate ring of the curve inside $\vMod(\mathcal{X},n)$ such that $B$ is regular. Then $B$ is a Dedekind ring and its Ziegler spectrum is well-understood. As a consequence, we show a dichotomy for constructible subcategories $\mathcal{Y} \subseteq \Mod B$. The constructible subcategory $\mathcal{X} \subseteq \Mod A$ lifts to a constructible subcategory $\mathcal{Y} \subseteq \Mod B$, which is large in terms of the dichotemy. Thus, $\mathcal{Y}$ contains $B/m^i$ for all but finitely many maximal ideals $m \subseteq B$ and all $i>0$. 
    \item \textbf{Large modules.} The $B$-modules $B/m^i \in \mathcal{Y}$ correspond to $A$-modules $X_{m,i} \in \mathcal{X}$, which are the candidates to show that $\mathcal{X}$ is of unbounded type. Using the Ziegler spectrum of $A$ their behaviour can be controlled by the infinite dimensional $A$-module $X$ corresponding to $\Frac(B)$. Indeed, by the properties of the curve it can be shown that $X$ has an infinite dimensional indecomposable direct summand. Consequently, it follows that for every $m$ there are infinitely many non-isomorphic indecomposable direct summands of the modules $X_{m,i}$ with $i >0$. The last step is to confirm that their dimension is not bounded. This is deduced from the existence of the short exact sequences $0 \to X_{m,i} \to X_{m,j} \to X_{m,j-i} \to 0$ for $j>i>0$.
\end{itemize}

    The second Brauer-Thrall conjecture states that for a finite dimensional algebra over an infinite field, infinite representation type implies strongly unbounded representation type. For algebraically closed fields it was proven by Bautista for $\textnormal{char}(k) \neq 2$ and the case $\textnormal{char}(k) = 2$ was completed by Bongartz \cite{Bautista, Bongartz}. This also implies the conjecture for perfect infinite fields, but it remains open in general. Under additional assumptions on the algebra, a variant of the second Brauer-Thrall conjecture for constructible subcategories  is shown.\\
    \\
    \textbf{Theorem B.} (\Cref{kgbtt}) \textit{Let $k$ be a perfect infinite field, $\bar{k}$ the algebraic closure of $k$, $A$ a finite dimensional $k$-algebra such that the Krull-Gabriel dimension of $A\otimes_k \bar{k}$ is defined and $\mathcal{X} \subseteq \Mod A$ a constructible subcategory. If $\mathcal{X}$ is of infinite type, then $\mathcal{X}$ is of strongly unbounded type.}\\

    The statement in Theorem B is wrong for arbitrary finite dimensional algebras, see \Cref{counter}. The Krull-Gabriel dimension, introduced by Geigle \cite{Geigle}, is a measure for the complexity of the module category of $A$. A conjecture of Prest states that the Krull-Gabriel dimension of $A$ is defined if and only if $A$ is domestic \cite[Conjecture 9.1.15]{Prest}. The conjecture is known for several classes of algebras, for a detailed discussion see \cite[Section 1]{Pastuszak}.

    Lastly, we show how exact structures, in the sense of Quillen \cite{Quillen}, and matrix reductions, in the framework of lift categories \cite{CB}, yield constructible subcategories. An exact structure $\mathcal{E}$ is a collection of kernel-cokernel pairs subject to some closure properties. Further, $\mathcal{E}$ is \emph{finitely generated} if it equals the smallest exact structure containing a specific kernel-cokernel pair. For a finite dimensional algebra $A$ and a finitely generated exact structure $\mathcal{E}$ on $\mod A$ the modules $X\in \Mod A$ that behave injectively with respect to $\mathcal{E}$ ($\coker \Hom_{A}(f, X) = 0$ for all $(f,g)\in \mathcal{E}$) form a constructible subcategory of $\Mod A$. It follows from the connection between exact structures and the theory of purity shown in \cite{Sch} that, in this way, one can essentially describe every constructible subcategory.

    Matrix reductions in terms of lift categories were introduced by Crawley-Boevey. A main feature is the extent of generality in which they work. In particular, for finite dimensional algebras there is no restriction on the field. A connection between the second Brauer-Thrall conjecture and the existence of certain infinite dimensional modules, originally proven via lift categories, see \cite[Theorem 9.6]{Crawley-Boevey}, can partially be proven using the Ziegler spectrum \cite[Theorem 9.6]{Herzog}. This hints at a relation between matrix reductions and the theory of purity, which is established in this work. As a consequence, we show that performing matrix reductions for a finite dimensional algebra $A$ gives rise to constructible subcategories of $\Mod A$. By the discussion before, these constructible subcategories essentially come from exact structures on $\mod A$.\\
    \\
    \textbf{Theorem C} (\Cref{translation}) \textit{Let $k$ be a field and $A$ a finite dimensional $k$-algebra. Matrix reductions for $A$ in the form of lift categories translate to reductions of exact structures. Moreover, the exact structures involved are finitely generated.}\\

    The concept of reductions of exact structures was introduced in \cite{BHLR} and it is described by switching between differenct exact structures on $\mod A$. The authors introduce the notion of length and Gabriel-Roiter measure relative to an exact structure and investigate how these change when performing reductions of exact structures. Further, it was shown in \cite[Section 4]{BHLR} that matrix reductions for a given example translate to reductions of exact structures. Whether or not this is always possible was an open question. Theorem C yields a positive answer.

\section{Preliminaries}

A main tool in this work is the theory of purity for definable categories in the sense of Prest \cite{Prest2}. While we are mainly concerned with module categories over rings, this very general setup gives a more complete point of view and  simplifies some arguments.   

\subsection{Definable categories.} For an additive category $\mathcal{A}$ with filtered colimits and products let $C(\mathcal{A})$ be the category of additive functors $\mathcal{A} \rightarrow \Ab$ that commute with filtered colimits and products. The category $C(\mathcal{A})$ is abelian and \mbox{(co{-)}kernels} are computed locally. Let $\Ex(C(\mathcal{A}), \Ab)$ be the category of exact functors $C(\mathcal{A}) \rightarrow \Ab$. We call $\mathcal{A}$ \emph{definable} if $C(\mathcal{A})$ is essentially small and $\mathcal{A} \rightarrow \Ex(C(\mathcal{A}), \Ab), X\mapsto \bar{X}$ is an equivalence, where $\bar{X}(F) = F(X)$ for $X\in \mathcal{A}$ and $F\in C(\mathcal{A})$. For example, finitely accessible categories with products are definable. The definition of definable categories here is equivalent to the one in \cite[Chapter 10]{Prest2} as well as Krause's definition of exactly definable categories in  \cite{Krause}, see \cite[Theorem 12.10]{Prest2}.

\begin{prop*} \cite[Corollary 2.9]{Krause} There is a one to one correspondence between definable categories $\mathcal{A}$ and essentially small abelian categories $\mathcal{C}$ up to equivalence, given by 
\begin{align*}
    \mathcal{A} \mapsto C(\mathcal{A}) \quad \text{ and } \quad \mathcal{C} \mapsto \Ex(\mathcal{C}, \Ab). 
\end{align*}
\end{prop*}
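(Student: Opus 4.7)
The plan is to verify that the two assignments $\mathcal{A}\mapsto C(\mathcal{A})$ and $\mathcal{C}\mapsto \Ex(\mathcal{C},\Ab)$ are well-defined on equivalence classes and mutually quasi-inverse.

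First I would handle the easy direction. If $\mathcal{A}$ is definable then $C(\mathcal{A})$ is essentially small by hypothesis, and it is abelian because (co-)kernels in $C(\mathcal{A})$ can be formed pointwise in $\Ab$: a pointwise (co-)kernel of a natural transformation between functors preserving filtered colimits and products again preserves filtered colimits and products, so the pointwise (co-)kernel lies in $C(\mathcal{A})$. Moreover, the equivalence $\mathcal{A}\simeq \Ex(C(\mathcal{A}),\Ab)$ is baked into the very definition of definability, so one of the two compositions is a tautology.

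For the opposite direction, given an essentially small abelian category $\mathcal{C}$, I would first check that filtered colimits and products exist in $\Ex(\mathcal{C},\Ab)$ and are computed pointwise in $\Ab$, using that exactness of functors to $\Ab$ is preserved by filtered colimits and arbitrary products. Each $X\in \mathcal{C}$ then yields the evaluation $\ev_X\in C(\Ex(\mathcal{C},\Ab))$, assembling into a functor
\[
\Phi\colon \mathcal{C}\longrightarrow C(\Ex(\mathcal{C},\Ab)),\qquad X\longmapsto \ev_X.
\]
The heart of the argument is showing that $\Phi$ is an equivalence. Once this is established, $C(\Ex(\mathcal{C},\Ab))$ is automatically essentially small, and the remaining definability condition $\Ex(\mathcal{C},\Ab)\xrightarrow{\sim}\Ex(C(\Ex(\mathcal{C},\Ab)),\Ab)$ becomes a tautology after pulling back along $\Phi$; the formula $\mathcal{C}\simeq C(\Ex(\mathcal{C},\Ab))$ then closes the loop on the two assignments being inverse.

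The main obstacle is therefore the equivalence $\Phi$. Full faithfulness requires producing enough exact functors $\mathcal{C}\to \Ab$ to separate objects and morphisms of $\mathcal{C}$, and essential surjectivity requires showing that every functor on $\Ex(\mathcal{C},\Ab)$ preserving filtered colimits and products is of the form $\ev_X$. My approach would be to realise $\mathcal{C}$ as a Serre quotient of $\mod \Lambda$ for some small additive category $\Lambda$, for instance via Freyd's embedding into a free abelian category or a Gabriel--Popescu style argument. Under this identification, $\Ex(\mathcal{C},\Ab)$ becomes precisely the definable subcategory of $\Mod\Lambda$ cut out by the Serre subcategory $\mathcal{S}\subseteq \mod \Lambda$ corresponding to the quotient, and $C(\Ex(\mathcal{C},\Ab))$ becomes the Serre quotient $\mod\Lambda/\mathcal{S}\simeq \mathcal{C}$. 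The fullness/faithfulness of $\Phi$ then reduces to the Yoneda lemma inside $\mod\Lambda$ modulo $\mathcal{S}$, and essential surjectivity reduces to the well-known description of the category of finitely presented functors on a module category together with the Serre/definable correspondence highlighted in the introduction.
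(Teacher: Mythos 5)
The paper cites this result from Krause (``Exactly definable categories,'' Corollary~2.9) and does not supply its own proof; your outline follows Krause's route closely, observing that the forward composite is a tautology from the definition of definability, and that the real content is the equivalence $\mathcal{C}\simeq C(\Ex(\mathcal{C},\Ab))$, which one proves by presenting $\mathcal{C}$ as a Serre quotient of the free abelian category on a small additive category $\Lambda$ and then invoking the known Serre/definable correspondences for $\Mod\Lambda$. One small slip: where you write ``Serre quotient of $\mod\Lambda$'' you should mean a Serre quotient of $C(\Mod\Lambda)$ (the finitely presented functors $\Mod\Lambda\to\Ab$), which is the free abelian category on $\Lambda$; and the Gabriel--Popescu suggestion is not the right tool here --- it concerns Grothendieck categories rather than small abelian ones, so Freyd's free abelian category is what you actually want.
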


\subsection{Purity} Let $\mathcal{A}$ be a definable category. A sequence $0 \rightarrow X \rightarrow Y \rightarrow Z \rightarrow 0$ in $\mathcal{A}$ is \emph{pure-exact} if 
\begin{align*}
    0 \longrightarrow F(X) \longrightarrow F(Y) \longrightarrow F(Z) \longrightarrow 0
\end{align*}
is exact for all $F\in C(\mathcal{A})$. In that case $X$ is a \emph{pure subobject} of $Y$. An object $X\in \mathcal{A}$ is \emph{pure-injective} if every pure-exact sequence starting in $X$ splits. The isomorphism classes of indecomposable pure-injective objects in $\mathcal{A}$ form a topological space, the \emph{Ziegler spectrum} of
$\mathcal{A}$, denoted by $\Ind \mathcal{A}$. A subset $\mathcal{U}$ of $\Ind \mathcal{A}$ is closed if there is a definable subcategory $\mathcal{X}$ of $\mathcal{A}$ such that $\mathcal{U} = \mathcal{X} \cap \Ind \mathcal{A}$.

A full additive subcategory $\mathcal{X}$ of $\mathcal{A}$ is a \emph{definable subcategory} if $\mathcal{X}$ is closed under filtered colimits, products and pure subobjects. Equivalently, there is a set of functors $S \subseteq C(\mathcal{A})$ such that
\begin{align*}
    \mathcal{X} = \mathcal{X}_S = \{X\in \mathcal{A} \mid F(X) = 0 \text{ for all }F\in {S}\}.
\end{align*}
We write $\mathcal{X}_F = \mathcal{X}_{\{F\}}$ for $F \in C(\mathcal{A})$.

A full additive subcategory of an abelian category is a \emph{Serre subcategory} if it is closed under extensions, subobjects and quotients. The following assignments are important for the theory of purity.

\begin{thm*} \cite[Corollary 4.7, Theorem 6.2]{Krause} Let $\mathcal{A}$ be a definable category. There are one to one correspondences between closed sets $\mathcal{U}$ in $\Ind \mathcal{A}$, Serre subcategories $\mathcal{S}$ of $C(\mathcal{A})$ and definable subcategories $\mathcal{X}$ of $\mathcal{A}$, given by
\begin{align*}
    \mathcal{U} &\mapsto \mathcal{S}_\mathcal{U}=\{F\in C(\mathcal{A}) \mid F(X) = 0 \text{ for all }X\in \mathcal{U}\},\\
    \mathcal{S}&\mapsto \mathcal{X}_{\mathcal{S}} = \{X \in \mathcal{A} \mid F(X) = 0\text{ for all }F\in \mathcal{S} \},\\
    \mathcal{X} &\mapsto \mathcal{U}_\mathcal{X} = \mathcal{X}\cap \Ind \mathcal{A}.
\end{align*}
\end{thm*}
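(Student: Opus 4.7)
The plan is to split the three-way correspondence into two independent bijections: a purely algebraic one between Serre subcategories of $C(\mathcal{A})$ and definable subcategories of $\mathcal{A}$, and a topological one identifying each definable subcategory with its intersection with $\Ind \mathcal{A}$. Composing them through $\mathcal{U} \mapsto \mathcal{S}_\mathcal{U} \mapsto \mathcal{X}_{\mathcal{S}_\mathcal{U}}$ then delivers all three correspondences simultaneously.

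For the algebraic bijection I would lean on the equivalence $\mathcal{A} \simeq \Ex(C(\mathcal{A}), \Ab)$ from the preceding Proposition. Given a Serre subcategory $\mathcal{S} \subseteq C(\mathcal{A})$, the Serre quotient $C(\mathcal{A})/\mathcal{S}$ is again essentially small abelian, and precomposition with the quotient functor identifies $\Ex(C(\mathcal{A})/\mathcal{S}, \Ab)$ with the full subcategory of $\Ex(C(\mathcal{A}), \Ab)$ consisting of those exact functors that kill $\mathcal{S}$, which under the equivalence is exactly $\mathcal{X}_\mathcal{S}$. In particular, $\mathcal{X}_\mathcal{S}$ is definable. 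The round-trip $\mathcal{S} \mapsto \mathcal{X}_\mathcal{S} \mapsto \mathcal{S}_{\mathcal{X}_\mathcal{S}}$ then recovers $\mathcal{S}$: an $F$ annihilated by every exact functor on $C(\mathcal{A})/\mathcal{S}$ must vanish in that quotient by a Gabriel--Popescu style faithfulness of the embedding of an essentially small abelian category into exact functors on it. Conversely, any definable $\mathcal{X} = \mathcal{X}_T$ for some $T \subseteq C(\mathcal{A})$ coincides with $\mathcal{X}_{\mathcal{S}_\mathcal{X}}$, because vanishing on $T$ automatically extends to the Serre closure.

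The second stage, which is the genuine obstacle, is to show that a definable subcategory $\mathcal{X}$ is detected by $\mathcal{U}_\mathcal{X} := \mathcal{X} \cap \Ind \mathcal{A}$; equivalently, that any $F \in C(\mathcal{A})$ vanishing on all indecomposable pure-injectives of $\mathcal{X}$ vanishes on all of $\mathcal{X}$. My plan is to use that every $X \in \mathcal{X}$ admits a pure monomorphism into its pure-injective hull $H(X)$, which again lies in $\mathcal{X}$ by closure of $\mathcal{X}$ under pure subobjects together with a construction of $H(X)$ as a direct summand of a product of objects of $\mathcal{X}$; purity of the inclusion gives an embedding $F(X) \hookrightarrow F(H(X))$, so it suffices to handle pure-injectives. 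Here I would invoke Ziegler's decomposition theorem for pure-injectives combined with a Zorn argument guaranteeing that every non-zero pure-injective of $\mathcal{X}$ contains an indecomposable pure-injective direct summand (inside the definable subcategory it generates). Propagating the vanishing of $F$ on $\mathcal{U}_\mathcal{X}$ through this decomposition forces $F(H(X)) = 0$, hence $F(X) = 0$. Once density is established, $\mathcal{X} \mapsto \mathcal{U}_\mathcal{X}$ is injective, and since closed sets are by definition of the form $\mathcal{X} \cap \Ind \mathcal{A}$ it is bijective, closing the correspondence.
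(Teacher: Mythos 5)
The paper does not prove this statement; it is a citation to Krause's \emph{Exactly definable categories}. Your proposal is an attempt at an independent proof, so I will assess it on its own terms.

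Your first stage is essentially correct. The identification of $\mathcal{X}_\mathcal{S}$ with $\Ex(C(\mathcal{A})/\mathcal{S}, \Ab)$ via the universal property of the Serre quotient is right, and the round-trip $\mathcal{S} \mapsto \mathcal{X}_\mathcal{S} \mapsto \mathcal{S}_{\mathcal{X}_\mathcal{S}}$ recovers $\mathcal{S}$ because a nonzero object of an essentially small abelian category survives under some exact functor to $\Ab$ (this follows from the existence of a faithful exact embedding into a module category, Freyd--Mitchell, composed with the forgetful functor; ``Gabriel--Popescu style'' is a slight misnomer but the idea is fine). Likewise $\mathcal{X}_T = \mathcal{X}_{\mathcal{S}_{\mathcal{X}_T}}$ because vanishing is inherited along subquotients and extensions.

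Your second stage has a genuine gap that cannot be repaired as stated. The Zorn argument you invoke, that every nonzero pure-injective object of $\mathcal{X}$ has an indecomposable pure-injective direct summand, is false: superdecomposable pure-injectives exist (already over suitable finite dimensional algebras, and over $k[x,y]$), and by definition they admit no indecomposable direct summands at all. Ziegler's structure theorem does not rescue the argument, since it only expresses a pure-injective $E$ as the pure-injective hull of $(\bigoplus_i N_i) \oplus E_c$ with the $N_i$ indecomposable and $E_c$ superdecomposable; knowing $F(N_i)=0$ for all $i$ (and hence $F$ vanishes on $H(\bigoplus_i N_i)$, which is a summand of $\prod_i N_i$) says nothing about $F(E_c)$. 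The density of the indecomposable pure-injectives in a definable subcategory is a real theorem requiring a different mechanism. The standard route (Krause, Herzog, and ultimately Ziegler's original argument) works in the locally coherent Grothendieck category $\Lex(C(\mathcal{A})^{\op}, \Ab)$, where pure-injectives of $\mathcal{A}$ become injectives, and shows that a nonzero coherent object $F$ admits a nonzero morphism to an indecomposable injective by passing to the quotient by a maximal Serre subcategory of coherent objects not containing $F$; maximality, rather than any Krull--Schmidt-type decomposition, is what makes the relevant injective hull indecomposable. That idea is absent from your sketch and is the missing ingredient.

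Once the density is granted, your conclusion that $\mathcal{X} \mapsto \mathcal{U}_\mathcal{X}$ is a bijection onto closed sets is correct: the preparatory step that $F(X) \hookrightarrow F(H(X))$ along the pure monomorphism $X \hookrightarrow H(X)$, and that $H(X) \in \mathcal{X}$ as a summand of a product of copies of $X$, is fine.
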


A definable subcategory $\mathcal{X}$ of a definable category $\mathcal{A}$ is again a definable category \cite[Proposition 4.1]{Krause}. The functor $C(\mathcal{A}) \rightarrow C(\mathcal{X}), F \mapsto F|_\mathcal{X}$ induces an equivalence between the Serre quotient $C(\mathcal{A})/\mathcal{S}_\mathcal{X}$ and $C(\mathcal{X})$. Moreover, the inclusion $\mathcal{X} \subseteq \mathcal{A}$ induces a homeomorphism $\Ind \mathcal{X} \rightarrow \mathcal{U}_\mathcal{X}$, see \cite[Corollary 6.3]{Krause}.

\begin{prop*} \cite[Theorem 14.1]{Prest2} Let $\mathcal{U}$ be a closed set in $\Ind \mathcal{A}$. Then $\Ind \mathcal{A} \setminus \mathcal{U}$ is compact if and only if there is $F\in C(\mathcal{A})$ with $\mathcal{U} =  \mathcal{X}_{F} \cap \Ind \mathcal{A}$.
\end{prop*}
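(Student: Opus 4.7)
The plan is to translate compactness into a finiteness property of the Serre subcategory correspondence, using Alexander's subbase theorem as the bridge. The first step is to single out a convenient subbase: the sets $(F) := \{M \in \Ind \mathcal{A} \mid F(M) \neq 0\}$, for $F \in C(\mathcal{A})$, form a subbase for the topology on $\Ind \mathcal{A}$, since by definition every closed set is of the form $\mathcal{X}_S \cap \Ind \mathcal{A} = \bigcap_{F \in S}(\mathcal{X}_F \cap \Ind \mathcal{A})$. Two structural facts will be used repeatedly: (i) $\mathcal{X}_{F_1} \cap \dots \cap \mathcal{X}_{F_n} = \mathcal{X}_{F_1 \oplus \dots \oplus F_n}$, because a direct sum vanishes at $M$ precisely when each summand does; and (ii) if $\mathcal{S}$ is the Serre subcategory of $C(\mathcal{A})$ generated by a family $\{F_i\}_{i \in I}$, then $\mathcal{X}_\mathcal{S} = \bigcap_i \mathcal{X}_{F_i}$. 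The latter uses that $C(\mathcal{A})$ has kernels and cokernels computed locally, so for any short exact sequence $0 \to G_1 \to G \to G_2 \to 0$ in $C(\mathcal{A})$ the sequence $0 \to G_1(M) \to G(M) \to G_2(M) \to 0$ is exact, whence vanishing at a fixed $M$ is preserved under subobjects, quotients, and extensions.

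For the direction $(\Rightarrow)$, assume $\Ind \mathcal{A} \setminus \mathcal{U}$ is compact and let $\mathcal{S} = \mathcal{S}_\mathcal{U}$ be the Serre subcategory corresponding to $\mathcal{U}$. Then
\begin{align*}
\Ind \mathcal{A} \setminus \mathcal{U} \;=\; \bigcup_{G \in \mathcal{S}} (G),
\end{align*}
an open cover by subbasic opens. Compactness produces finitely many $F_1, \dots, F_n \in \mathcal{S}$ with $\Ind \mathcal{A} \setminus \mathcal{U} = (F_1) \cup \dots \cup (F_n)$. Setting $F := F_1 \oplus \dots \oplus F_n$ and applying (i) then yields $\mathcal{U} = \mathcal{X}_F \cap \Ind \mathcal{A}$.

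For the direction $(\Leftarrow)$, assume $\mathcal{U} = \mathcal{X}_F \cap \Ind \mathcal{A}$ and let $\{(F_i)\}_{i \in I}$ be any subbasic cover of $\Ind \mathcal{A} \setminus \mathcal{U}$. Complementing gives $\bigcap_{i \in I}(\mathcal{X}_{F_i} \cap \Ind \mathcal{A}) \subseteq \mathcal{X}_F \cap \Ind \mathcal{A}$, which by (ii) and the bijectivity of the correspondence from the excerpt translates into the statement that $F$ belongs to the Serre subcategory $\mathcal{S}'$ of $C(\mathcal{A})$ generated by $\{F_i\}_{i \in I}$. Since $\mathcal{S}'$ is the directed union of the Serre subcategories generated by finite subfamilies, there exist $F_{i_1}, \dots, F_{i_n}$ such that $F$ already lies in $\langle F_{i_1}, \dots, F_{i_n}\rangle$. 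Translating back by (ii) yields $\mathcal{X}_{F_{i_1}} \cap \dots \cap \mathcal{X}_{F_{i_n}} \subseteq \mathcal{X}_F$, so $(F_{i_1}), \dots, (F_{i_n})$ already cover $\Ind \mathcal{A} \setminus \mathcal{U}$. Alexander's subbase theorem then gives compactness of $\Ind \mathcal{A} \setminus \mathcal{U}$.

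The main obstacle I anticipate is the finiteness property in step $(\Leftarrow)$: verifying that the Serre closure of $\{F_i\}_{i \in I}$ equals the directed union of Serre closures of its finite subfamilies. This is formal once one presents the Serre closure by transfinite iteration of the operations of taking subobjects, quotients, and extensions — every element is witnessed by a finite tree of such operations — but it is the one non-topological input beyond the correspondence cited from \cite{Krause}. All other ingredients reduce to routine manipulations of the bijection between closed sets of $\Ind \mathcal{A}$ and Serre subcategories of $C(\mathcal{A})$.
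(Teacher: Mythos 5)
The paper does not give its own proof of this statement; it is quoted as Theorem~14.1 of Prest's memoir \cite{Prest2} (and can also be found in Krause and in Ziegler's original work). So there is no in-paper argument to compare against. On its own merits, your proof is correct and is in fact the standard argument.

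A few remarks. Your observation that every closed set has the form $\bigcap_{F\in S}(\mathcal{X}_F\cap\Ind\mathcal{A})$ shows that the sets $(F)$ actually form a \emph{base}, not merely a subbase, for the Ziegler topology; this means Alexander's subbase theorem is dispensable in the $(\Leftarrow)$ direction (an arbitrary open cover can be refined to one by basic opens directly), though invoking Alexander is a perfectly clean way to organize the argument. The step you flag as the main obstacle --- that the Serre subcategory $\mathcal{S}'$ generated by $\{F_i\}_{i\in I}$ is the directed union of the Serre subcategories generated by finite subfamilies --- is handled correctly and is indeed the crucial non-topological input: the directed union is closed under subobjects and quotients for trivial reasons, and under extensions because any short exact sequence involves only two outer terms, so both land in a single $\langle F_{j}: j\in J\rangle$ with $J$ finite. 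Your use of the order-reversing bijection from Theorem~1.2 to convert $\bigcap_i(\mathcal{X}_{F_i}\cap\Ind\mathcal{A})\subseteq\mathcal{X}_F\cap\Ind\mathcal{A}$ into $F\in\mathcal{S}'$ is also exactly right, and relies on the fact that $\mathcal{S}_{\mathcal{U}_{\mathcal{S}'}}=\mathcal{S}'$ (the correspondence is a bijection, not merely a Galois connection). In the $(\Rightarrow)$ direction you should, strictly speaking, also note that $(G_j)\subseteq\Ind\mathcal{A}\setminus\mathcal{U}$ for each $G_j\in\mathcal{S}_{\mathcal{U}}$, so the finite union is \emph{equal} to (not merely contains) $\Ind\mathcal{A}\setminus\mathcal{U}$; this follows immediately from $G_j$ vanishing on $\mathcal{U}$, and you implicitly use it. Overall: a complete and correct proof.
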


\subsection{Definable functors} An additive functor between definable categories that commutes with filtered colimits and products is called \emph{definable}. This definition agrees with the one in \cite[Section 7]{Krause} by \cite[Theorem 13.1]{Prest2}. The correspondence in Proposition 1.1 is functorial in the following sense. It induces a duality between the category of essentially small abelian categories with exact functors as morphisms, and the category of definable categories with definable functors as morphisms, see again \cite[Theorem 13.1]{Prest2}. 

\begin{thm*} \cite[Theorem 7.8]{Krause} Let $f\colon \mathcal{A}\rightarrow \mathcal{B}$ be a definable functor between definable categories $\mathcal{A}$ and $\mathcal{B}$. \begin{itemize}
    \item[\rm (1)] The functor $f$ preserves pure-injectivity and pure-exact sequences. 
    \item[\rm (2)] For $U \subseteq \Ind \mathcal{A}$ let $V \subseteq \Ind \mathcal{B}$ be the set of all indecomposable direct summands of objects in $f(U)$. The closure $\overline{V}$ consists of all indecomposable direct summands of objects in $f(\overline{U})$.
\end{itemize}
\end{thm*}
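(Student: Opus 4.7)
The plan is to exploit the duality from Proposition~1.1, under which the definable functor $f$ corresponds to an exact functor $f^{\ast}\colon C(\mathcal{B})\to C(\mathcal{A})$ with a natural isomorphism $G(f(X))\cong (f^{\ast}G)(X)$ for $X\in\mathcal{A}$ and $G\in C(\mathcal{B})$. Preservation of pure-exact sequences in Part (1) is then immediate: if $0\to X\to Y\to Z\to 0$ is pure-exact in $\mathcal{A}$, applying $G\in C(\mathcal{B})$ to its image in $\mathcal{B}$ coincides by naturality with evaluating $f^{\ast}G\in C(\mathcal{A})$ on the original sequence, which is exact. For preservation of pure-injectives I would use the Jensen--Lenzing characterisation: $X\in\mathcal{A}$ is pure-injective iff for every set $I$ the summation map $X^{(I)}\to X$ factors through the canonical pure monomorphism $X^{(I)}\hookrightarrow X^{I}$. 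Since a definable functor preserves products and coproducts (the latter as filtered colimits of finite coproducts), applying $f$ to such a factorisation yields the corresponding factorisation $f(X)^{(I)}\hookrightarrow f(X)^{I}\to f(X)$, and hence $f(X)$ is pure-injective.

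For Part (2), write $W$ for the set of all indecomposable direct summands of objects in $f(\overline U)$. By Part (1), every $f(Y)$ with $Y\in\overline U$ is pure-injective, so $W\subseteq\Ind\mathcal{B}$, and $V\subseteq W$ is immediate from $U\subseteq\overline U$. To obtain $W\subseteq \overline V$, let $N\in W$ be a summand of some $f(Y)$ with $Y\in\overline U$ and let $(G)=\{M\in\Ind\mathcal{B}\mid GM\neq 0\}$ be a basic Ziegler open neighbourhood of $N$. Additivity of $G$ forces $G(f(Y))\neq 0$, hence $(f^{\ast}G)(Y)\neq 0$, so the open set $(f^{\ast}G)\subseteq\Ind\mathcal{A}$ contains $Y\in\overline U$ and must meet $U$, yielding $X\in U$ with $G(f(X))\neq 0$, from which an indecomposable summand of $f(X)$ in $V\cap (G)$ is extracted. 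Conversely, $\overline V = \langle V\rangle\cap\Ind\mathcal{B}$ by the Ziegler correspondence, and since $f$ preserves products, filtered colimits, and pure subobjects, it carries the definable subcategory of $\mathcal{A}$ generated by $\overline U$ into $\langle V\rangle$. Every $N\in \overline V$ is then to be realised as an indecomposable direct summand of some $f(Y)$ with $Y\in \overline U$ via a suitable limit construction, for example an ultraproduct of elements of $U$ along a Ziegler-convergent filter, which is preserved by $f$.

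The principal technical difficulty is this final realisation step in Part (2) --- promoting an abstract Ziegler limit point to a concrete indecomposable direct summand of some $f(Y)$ --- together with the careful handling of possible superdecomposable components in the pure-injective decompositions that enter both inclusions; the remainder of the argument is straightforward once the duality and the summation characterisation of pure-injectives are in hand.
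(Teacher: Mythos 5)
Note first that the paper itself does not prove this statement; it is cited verbatim from Krause's paper on exactly definable categories, so there is no internal argument here to compare against. I can only evaluate your sketch on its own merits.

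Your Part (1) is essentially correct. The duality $G(f(X)) \cong (f^{\ast}G)(X)$ for the exact functor $f^{\ast}\colon C(\mathcal{B}) \to C(\mathcal{A})$ corresponding to $f$ gives preservation of pure-exactness immediately, and the Jensen--Lenzing factorisation criterion transfers under $f$ because a definable functor preserves products and coproducts (the latter as filtered colimits of finite coproducts), hence carries the canonical factorisation $X^{(I)} \to X^{I} \to X$ to the corresponding one for $f(X)$. In Part (2), however, the "principal technical difficulty" you flag at the end is the entire content of the theorem, and the surrounding argument is either incomplete or circular. For $W \subseteq \overline{V}$: from $G(f(X)) \neq 0$ with $X \in U$ the Ziegler correspondence only produces an indecomposable $M$ with $G(M) \neq 0$ lying in the definable subcategory \emph{generated by} $f(X)$; you need $M$ to be a direct summand of $f(X)$ itself, but a priori $M$ might only arise as a summand of a power $f(X)^{\kappa} = f(X^{\kappa})$, or $G$ might be nonvanishing only on a superdecomposable summand of $f(X)$ carrying no indecomposable piece at all. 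For $\overline{V} \subseteq W$: the assertion that $f$ "carries the definable subcategory of $\mathcal{A}$ generated by $\overline{U}$ into $\langle V \rangle$" does not follow from preservation of products, filtered colimits and pure subobjects alone --- it amounts to $f(\overline{U}) \subseteq \langle V \rangle$, which is exactly what is to be established, so the reasoning is circular; and the appeal to an ultraproduct along a Ziegler-convergent filter would at best exhibit a pure-injective lying in $\langle f(\overline{U}) \rangle$, not a witness of the specific required form, namely an object $f(Y)$ with $Y \in \overline{U}$ of which $N$ is a direct summand. As written, neither inclusion is proved.
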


\subsection{Finitely accessible categories} Natural examples of definable categories are finitely accessible categories with products, see \cite[Proposition 2.4]{Krause}. An additive category $\mathcal{A}$ with filtered colomits is \emph{finitely accessible} if the full subcategory of finitely presented objects in $\mathcal{A}$ is essentially small, and every object in $\mathcal{A}$ is a filtered colimit of finitely presented objects. An object $X\in \mathcal{A}$ is \emph{finitely presented} if $\Hom_{\mathcal{A}}(X,-)$ commutes with filtered colimits. 

If $\mathcal{A}$ is a finitely accessible category with products, then $C(\mathcal{A})$ can (and will) be identified with the category of finitely presented functors $\mathcal{A} \rightarrow \Ab$ \cite[Section 2]{Krause}. A functor $F \colon \mathcal{A} \rightarrow \Ab$ is \emph{finitely presented} if there is a short exact sequence
\begin{align*}
    \Hom_{\mathcal{A}}(Y,-) \longrightarrow \Hom_{\mathcal{A}}(X,-) \longrightarrow F \longrightarrow 0
\end{align*}
with $X,Y \in \mathcal{A}$ finitely presented.

\subsection{Module categories} Let $R$ be a ring (associative with identiy). The category $\Mod R$ of left $R$-modules is a finitely accessible category with products and thus definable. The finitely presented objects in $\Mod R$ form the full subcategory $\mod R$ of finitely presented modules. We set $C(R) = C(\Mod R)$ and $\Ind R = \Ind \Mod R$. The following are important properties of the Ziegler spectrum in this setting.
\begin{itemize} 
    \item[(1)] The space $\Ind R$ is compact \cite{Ziegler}. This can be deduced from Proposition 1.2 by considering the forgetful functor $F\colon \Mod R \rightarrow \Ab$.
    \item[(2)] For $X\in \Mod R$ the \emph{endolength} of $X$, denoted by $\lend(X)$, is the length of $X$ as an $\textnormal{End}_R(X)$-module. If $X$ is indecomposable and $\lend(X) < \infty$, then $\{X\}$ is a closed set in $\Ind R$, see \cite[Theorem 6.14]{Krause0}. 
    \item[(3)] For all $n \in \mathbb{N}$ the set $\{X \in \Mod R \mid X \text{ is indecomposable and } \lend(X) \leq n\}$ is closed in $\Ind R$, see \cite[Theorem 9.3]{Herzog}.
    \item[(4)] Let $R = A$ be a finite dimensional algebra and $X\in \Mod A$ indecomposable. Then $\{X\}$ is open in $\Ind A$ if and only if $\dim X < \infty$, see \cite[Example 3.7]{Krause0}. Note that $\dim X < \infty$ implies $\lend(X) < \infty$, so then $\{X\}$ is also closed.
\end{itemize}

\section{Constructible Subcategories}

Let $\mathcal{A}$ be a definable category. A full subcategory $\mathcal{X}$ of $\mathcal{A}$ is a \emph{constructible subcategory} if there is $F \in C(\mathcal{A})$ such that
\begin{align*}
    \mathcal{X} = \mathcal{X}_F = \{X \in \mathcal{A} \mid F(X) = 0\}.
\end{align*}
In that case $\mathcal{X}$ is also a definable subcategory. Our analysis mainly deals with the case $\mathcal{A} = \Mod A$ for a finitely generated $k$-algebra $A$, where $k$ is a field. 

\begin{rem}\label{int} \rm Let $\mathcal{A}$ be a definable category.
\begin{itemize}
    \item[(1)] A definable subcategory $\mathcal{X}$ of $\mathcal{A}$ is constructible if and only if for the closed set $\mathcal{U} \subseteq \Ind \mathcal{A}$ corresponding to $\mathcal{X}$ its complement $\Ind \mathcal{A} \setminus \mathcal{U}$ is compact, see Theorem 1.2 and Proposition 1.2.
    \item[(2)] Let $\mathcal{X}, \mathcal{Y}$ be constructible subcategories of $\mathcal{A}$. The intersection $\mathcal{X} \cap \mathcal{Y}$ is again a constructible subcategory of $\mathcal{A}$. Indeed, if $\mathcal{X} = \mathcal{X}_F$ and $\mathcal{Y} = \mathcal{X}_G$ for $F, G \in C(\mathcal{A})$, then $\mathcal{X} \cap \mathcal{Y} = \mathcal{X}_{F\oplus G}$ with $F\oplus G \in C(\mathcal{A})$. 
\end{itemize}
\end{rem}

We show that the property of being a constructible subcategory as well as a definable subcategory is transitive.

\begin{lem}\label{trans} Let $\mathcal{A}$ be a definable category. If $\mathcal{X}$ is a definable (constructible) subcategory of $\mathcal{A}$ and $\mathcal{Y}$ a definable (constructible) subcategory of $\mathcal{X}$, then $\mathcal{Y}$ is a definable (constructible) subcategory of $\mathcal{A}$. 
\end{lem}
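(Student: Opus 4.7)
The plan is to treat the definable and constructible cases separately, leveraging throughout the identification $C(\mathcal{X}) \simeq C(\mathcal{A})/\mathcal{S}_{\mathcal{X}}$ via restriction, recalled in Section~1.2. I would begin with a preliminary observation: the inclusion $\mathcal{X} \hookrightarrow \mathcal{A}$ both preserves and reflects pure-exact sequences. Indeed, every $G \in C(\mathcal{X})$ arises as the restriction of some $F \in C(\mathcal{A})$, so for a short exact sequence with all terms in $\mathcal{X}$, exactness after applying every $F \in C(\mathcal{A})$ is equivalent to exactness after applying every $G \in C(\mathcal{X})$. I would also note that every definable subcategory is closed under pure quotients as well as pure subobjects, since its defining functors in $C(\mathcal{A})$ are exact on pure-exact sequences.

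For the definable case, assume $\mathcal{Y} \subseteq \mathcal{X}$ is definable in $\mathcal{X}$. Filtered colimits and products in $\mathcal{X}$ agree with those in $\mathcal{A}$, so $\mathcal{Y}$ automatically inherits closure under these from its definability in $\mathcal{X}$. To handle pure subobjects, take $Y \in \mathcal{Y}$ together with a pure inclusion $Y' \hookrightarrow Y$ in $\mathcal{A}$, fitting into a pure-exact sequence $0 \to Y' \to Y \to Y'' \to 0$. Since $\mathcal{X}$ is closed under pure subobjects and pure quotients, both $Y'$ and $Y''$ lie in $\mathcal{X}$; by the preliminary observation the sequence is also pure-exact in $\mathcal{X}$, and definability of $\mathcal{Y}$ inside $\mathcal{X}$ then forces $Y' \in \mathcal{Y}$.

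For the constructible case, write $\mathcal{X} = \mathcal{X}_F$ with $F \in C(\mathcal{A})$ and $\mathcal{Y} = \mathcal{X}_G$ with $G \in C(\mathcal{X})$. Using the Serre quotient description, lift $G$ to some $\tilde{G} \in C(\mathcal{A})$ with $\tilde{G}|_{\mathcal{X}} \simeq G$. I would then verify that $\mathcal{Y} = \mathcal{X}_{F \oplus \tilde{G}}$ inside $\mathcal{A}$: an object $X$ vanishes on $F \oplus \tilde{G}$ precisely when $F(X) = 0$, placing it in $\mathcal{X}$, and $\tilde{G}(X) = G(X) = 0$, placing it in $\mathcal{Y}$. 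Equivalently, via \Cref{int}(1), $\Ind \mathcal{A} \setminus \mathcal{U}_{\mathcal{Y}}$ decomposes as the union of the compact set $\Ind \mathcal{A} \setminus \mathcal{U}_{\mathcal{X}}$ with the image of the compact set $\Ind \mathcal{X} \setminus \mathcal{U}_{\mathcal{Y}}$ under the homeomorphism $\Ind \mathcal{X} \to \mathcal{U}_{\mathcal{X}}$, hence is compact. No serious obstacle is anticipated; the only mildly subtle point is the identification of pure-exactness in $\mathcal{X}$ with pure-exactness in $\mathcal{A}$ for sequences whose terms lie in $\mathcal{X}$, which is precisely what the Serre quotient presentation of $C(\mathcal{X})$ provides.
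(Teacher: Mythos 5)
Your proof is correct, and the constructible case is essentially the paper's argument, but the definable case takes a genuinely different route, so a brief comparison is warranted.

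The paper handles both cases at once: it writes $\mathcal{X} = \mathcal{X}_S$, $\mathcal{Y} = \mathcal{X}_T$ with $S \subseteq C(\mathcal{A})$, $T \subseteq C(\mathcal{X})$, lifts each $G \in T$ to $H_G \in C(\mathcal{A})$ via the essential surjectivity of $C(\mathcal{A}) \to C(\mathcal{A})/\mathcal{S}_{\mathcal{X}} \simeq C(\mathcal{X})$, and observes $\mathcal{Y} = \mathcal{X}_{\tilde{S}}$ for $\tilde{S} = \{F \oplus H_G \mid F \in S, G \in T\}$; the constructible case falls out because singletons yield singletons. Your constructible argument is precisely this, specialized to singletons (and your alternative compactness argument via the homeomorphism $\Ind \mathcal{X} \to \mathcal{U}_{\mathcal{X}}$ from Section 1.2 is also valid and perhaps more transparent). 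For the definable case, however, you instead verify the three closure axioms directly. The one point you correctly flag as subtle — that purity in $\mathcal{X}$ and in $\mathcal{A}$ agree for sequences with terms in $\mathcal{X}$ — rests on the same essential surjectivity of $C(\mathcal{A}) \to C(\mathcal{X})$ that drives the paper's construction, so the two approaches are not as far apart as they first appear. What the paper's route buys is economy: one construction covers both cases, and no auxiliary facts about closure under pure quotients or about transfer of purity need to be invoked explicitly. What your route buys is that the definable case becomes a transparent axiom-check that does not require exhibiting any defining functors at all. Both are fine; the auxiliary observations you use (definable subcategories are closed under pure quotients, purity is preserved and reflected by the inclusion) are correct and follow from the definitions as you indicate.
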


\begin{proof} Let $\mathcal{X}= \mathcal{X}_S$ and $\mathcal{Y} = \mathcal{X}_T$ for $S \subseteq C(\mathcal{A})$ and $T \subseteq C(\mathcal{X})$, see Section 1.2. The equivalence $C(\mathcal{X}) \simeq C(\mathcal{A})/ \mathcal{S}_{\mathcal{X}}$ shows that every $G\in T$ factors as 
\begin{equation*}
\begin{tikzcd}
    \mathcal{X} \arrow[d, hook] \arrow[r, "G"] & \Ab\\
    \mathcal{A} \arrow[ru, "H_G", swap] & 
\end{tikzcd} 
\end{equation*}
for some $H_G \in C(\mathcal{A})$. Now $\mathcal{Y} = \mathcal{X}_{\tilde{S}}$ with $\tilde{S} = \{ F\oplus H_G \mid F\in S, G \in T \} \subseteq C(\mathcal{A})$. Clearly, if $S$ and $T$ are singletons, then so is $\tilde{S}$.
\end{proof}

The following are typical examples of constructible subcategories of a module category that itself are equivalent to a module category.

\begin{exa}\label{modex}\rm Let $R$ be a ring.
\begin{itemize}
    \item[(1)] For an ideal $I \subseteq R$ the category $\Mod R/I$ is equivalent to the full subcategory of all $X \in \Mod R$ with $IX = 0$. This is a definable subcategory by checking the closure properties. The condition $I X = 0$ can be expressed as $F(X) = 0$, where $F\colon \Mod R \rightarrow \Ab$ is defined by the exact sequence
    \begin{align*}
       \Hom_{R}(R/I,-) \longrightarrow \Hom_{R}(R,-) \longrightarrow F \longrightarrow 0. 
    \end{align*}
    If $I$ is finitely generated, then $R/I \in \mod R$ and $F\in C(R)$. Thus, in this case $\Mod R/I$ is equivalent to the constructible subcategory $\mathcal{X}_F$ of $\Mod R$.
    \item[(2)] For a morphism $P_1 \rightarrow P_2$ between projective modules in $\mod R$ consider the finitely presented functors $F,G\in C(R)$ defined by the exact sequence
    \begin{align*}
       0 \longrightarrow F \longrightarrow  \Hom_{R}(P_2, -) \longrightarrow \Hom_{R}(P_1,-) \longrightarrow G \longrightarrow0.
    \end{align*}
    Then $X\in \mathcal{X}_{F\oplus G}$ if and only if $\Hom_{R}(P_2, X) \rightarrow \Hom_{R}(P_1,X)$ is a bijection. This is a universal localisation in the sense of Schofield \cite{Schofield}, so there is a ring epimorphism $R \rightarrow S$ such that $\Mod S \simeq \mathcal{X}_{F\oplus G}$.
\end{itemize}
\end{exa}

From now on we fix a field $k$. While most of our methods work for constructible subcategories of the module category of a finitely generated $k$-algebra, in all of the examples we deal with finitely presented $k$-algebras. The case of a finitely presented $k$-algebra is particularly nice.

\begin{prop}\label{embed} Let $A$ be a finitely generated (presented) $k$-algebra. Then $\Mod A$ is equivalent to a definable (constructible) subcategory of $\Mod \tilde{A}$, where $\tilde{A}$ is a finite dimensional $k$-algebra. Further, the equivalence changes the dimension of modules by a factor of $2$.
\end{prop}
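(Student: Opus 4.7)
My plan is to factor the embedding through the free algebra on the generators of $A$. Choose a presentation $A = B/I$ where $B = k\langle x_1, \dots, x_n \rangle$ is the free $k$-algebra on $n$ generators, and $I$ is a two-sided ideal that is finitely generated precisely when $A$ is finitely presented. For $r \in B$, right-multiplication by $r$ is a left-$B$-linear endomorphism of $B$, and Yoneda turns it into a morphism of representable functors in $C(B)$ whose image $F_r$ is finitely presented with $F_r(M) = rM$. Because $B$ is unital, $IM = 0$ is equivalent to $r_j M = 0$ for each member of a two-sided generating family $\{r_j\}$ of $I$. Hence $\Mod A$ is equivalent to the definable subcategory $\{M \in \Mod B : IM = 0\}$ of $\Mod B$, which is constructible when $A$ is finitely presented because only finitely many functors $F_{r_j}$ are needed to cut it out.

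Next I embed $\Mod B$ constructibly into $\Mod \tilde{B}$ for a finite dimensional algebra $\tilde{B}$. Let $Q$ be the quiver with two vertices $1, 2$ and arrows $\alpha_0, \alpha_1, \dots, \alpha_n$, all from $2$ to $1$, and set $\tilde{B} := kQ$. Because every arrow points in the same direction, no length-two composition exists, so $\tilde{B}$ is finite dimensional of dimension $n+3$. A $\tilde{B}$-module is a tuple $(V_1, V_2, f_0, \dots, f_n)$ with $f_i \colon V_2 \to V_1$, and I define the functor $F \colon \Mod B \to \Mod \tilde{B}$ by $F(Y) := (Y, Y, \mathrm{id}_Y, a_1 \cdot, \dots, a_n \cdot)$ on objects and diagonally on morphisms. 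Full faithfulness is direct: intertwining a morphism $F(Y) \to F(Z)$ with $\alpha_0$ forces its two components to agree, and intertwining with the remaining $\alpha_i$ then says that this common component commutes with every algebra generator of $B$, hence is $B$-linear.

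The crucial step, where I expect the main technical point to lie, is to show that the essential image $\mathcal{X}$ of $F$ is constructible in $\Mod \tilde{B}$. One sees that $\mathcal{X}$ consists of exactly those tuples for which $f_0$ is an isomorphism: given such a tuple, the endomorphisms $f_0^{-1} f_i$ of $V_2$ define a $B$-module structure (since $B$ is free, any assignment of the generators is admissible), and the pair $(f_0, \mathrm{id}_{V_2})$ provides a $\tilde{B}$-linear isomorphism from $F$ of this $B$-module. The condition that $f_0$ be an isomorphism is the simultaneous vanishing of its kernel and cokernel viewed as functors on $\Mod \tilde{B}$; both are finitely presented because they arise from the natural transformation $\Hom_{\tilde{B}}(\tilde{B}e_2, -) \to \Hom_{\tilde{B}}(\tilde{B}e_1, -)$ induced via Yoneda by the left $\tilde{B}$-linear map $\tilde{B}e_1 \to \tilde{B}e_2$ corresponding to $\alpha_0$.

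Finally, combining the two embeddings by the transitivity in \Cref{trans} and setting $\tilde{A} := \tilde{B}$ produces an equivalence of $\Mod A$ with a definable (respectively constructible) subcategory of $\Mod \tilde{A}$. The dimension-doubling claim is immediate from $F(X) \cong X \oplus X$ as a $k$-vector space.
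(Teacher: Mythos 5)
Your proof is correct and takes essentially the same route as the paper: first reduce $\Mod A$ to the definable (or constructible) subcategory of $\Mod k\langle x_1,\dots,x_n\rangle$ cut out by the relations, then embed that free-algebra module category into modules over the $(n{+}1)$-arrow Kronecker path algebra, with the extra arrow carrying the identity and its invertibility detected by a finitely presented functor; transitivity and the evident $\dim F(X)=2\dim X$ finish the argument. The only differences from the paper's proof are cosmetic (arrow orientation and the labeling of the extra arrow), and your more explicit handling of the functors cutting out $\{M : IM=0\}$ is an equivalent reformulation of the paper's \Cref{modex}~(1).
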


\begin{proof} Let $A \cong k \langle x_1, \dots, x_n \rangle /I$. The category $\Mod A$ is equivalent to a definable subcategory $\mathcal{X}$ of $\Mod k \langle x_1, \dots, x_n \rangle$, respectively constructible subcategory if $I$ is finitely generated, by \Cref{modex} (1). Consider the $(n+1)$-Kronecker quiver
    \begin{equation*}
        Q = \begin{tikzcd}
        1 \arrow[r, "\alpha_1", shift left=2.5] \arrow[r, "\cdots", shift right=1] \arrow[r, "\alpha_{n+1}", shift right=2.5, swap] & 2
        \end{tikzcd}
    \end{equation*}
and set $\tilde{A} = k Q$, which is a finite dimensional $k$-algebra. We identify $\tilde{A}$-modules with quiver representations.

There is a fully faithful functor $f\colon \Mod k \langle x_1, \dots, x_n \rangle \rightarrow \Mod \tilde{A}$ that maps a module $X \in \Mod A$ to the quiver representation, where $\alpha_i$ corresponds to the map $X \rightarrow X, y \mapsto x_i y$ for $1 \leq i \leq n$ and $\alpha_{n+1}$ corresponds to $1_X$. The essential image of $f$ equals the collection of all quiver representations, where the map corresponding to $\alpha_{n+1}$ is a bijection. This condition can be expressed via \Cref{modex} (2) using the morphism $kQe_2 \rightarrow kQe_1, x \mapsto x \alpha_{n+1}$. It follows that the essential image of $f$ is a constructible subcategory of $\Mod \tilde{A}$. By \Cref{trans} the essential image of $f|_\mathcal{X}$, which is equivalent to $\Mod A$, is a definable subcategory of $\Mod \tilde{A}$, respectively constructible subcategory if $I$ is finitely generated. Further, by construction $\dim f(X) = 2 \dim X$ for $X\in \Mod A$.
\end{proof}

The above result shows that a constructible subcategory of the module category of a finitely presented $k$-algebra is equivalent to a constructible subcategory of the module category of a finite dimensional $k$-algebra. Note that a finite dimensional $k$-algebra is in particular finitely presented.

For a constructible subcategory $\mathcal{X}$ of $\Mod A$, where $A$ is a $k$-algebra, we are mainly interested in the full subcategory $\fin \mathcal{X}$ of finite dimensional modules in $\mathcal{X}$. Because the Hom-spaces are finite dimensional over $k$, it follows that $\fin \mathcal{X}$ is a Krull-Schmidt category. It may happen that $\fin \mathcal{X}$ is trivial, while $\mathcal{X}$ is not. For example, for the Weyl algebra $A = k \langle x,y \rangle/(xy-yx - 1)$ there are no non-zero finite dimensional $A$-modules.

\begin{lem}\label{duality} Let $A$ be a $k$-algebra and $\mathcal{X}$ a constructible subcategory of $\Mod A$. There is a constructible subcategory $\mathcal{Y}$ of $\Mod A ^\op$ such that $(\fin \mathcal{X})^\op \simeq \fin \mathcal{Y}$. 
\end{lem}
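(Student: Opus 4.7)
The plan is to use the standard $k$-linear duality $D := \Hom_k(-,k)$, which restricts to an equivalence $(\fin \Mod A)^{\op} \simeq \fin \Mod A^{\op}$, and to transport the defining functor of $\mathcal{X}$ to a finitely presented functor on $\Mod A^{\op}$ by replacing $\Hom$ with tensor. Concretely, I would fix $F \in C(A)$ with $\mathcal{X} = \mathcal{X}_F$, together with a presentation
\begin{align*}
\Hom_A(P_1, -) \xrightarrow{\alpha^*} \Hom_A(P_0, -) \longrightarrow F \longrightarrow 0
\end{align*}
coming from a morphism $\alpha \colon P_0 \to P_1$ in $\mod A$, and define $G \colon \Mod A^{\op} \to \Ab$ by
\begin{align*}
G(Y) := \ker\bigl(Y \otimes_A P_0 \xrightarrow{\mathrm{id}_Y \otimes \alpha} Y \otimes_A P_1\bigr).
\end{align*}

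The first step is to verify that $G \in C(A^{\op})$. For a finitely presented left $A$-module $P$ with presentation $A^m \to A^n \to P \to 0$, tensoring produces the exact sequence $(-)^m \to (-)^n \to -\otimes_A P \to 0$ of functors $\Mod A^{\op} \to \Ab$; since $(-)^n \cong \Hom_{A^{\op}}(A^n, -)$ is representable, the functor $-\otimes_A P$ lies in $C(A^{\op})$. Because $C(A^{\op})$ is abelian, the kernel $G$ of a morphism of finitely presented functors is again finitely presented, and $\mathcal{Y} := \mathcal{X}_G$ is a constructible subcategory of $\Mod A^{\op}$.

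The second step is to identify $G(DX)$ with the $k$-dual of $F(X)$ on finite-dimensional modules. By tensor-hom adjunction and the reflexivity $X \cong DDX$ for $X \in \fin \Mod A$, one has $\Hom_A(P_i, X) \cong \Hom_k(DX \otimes_A P_i, k)$, and unwinding the identification shows that $\alpha^*$ corresponds to $D(\mathrm{id}_{DX} \otimes \alpha)$. Since $F(X)$ is finite-dimensional, applying $D$ to the defining cokernel sequence gives $DF(X) \cong G(DX)$, so $F(X) = 0$ if and only if $G(DX) = 0$. The symmetric argument applied to $Y \in \fin \Mod A^{\op}$ via $Y \cong DDY$ yields $D(\fin \mathcal{X}) = \fin \mathcal{Y}$, so $D$ restricts to the equivalence $(\fin \mathcal{X})^{\op} \simeq \fin \mathcal{Y}$. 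The only real content beyond formal manipulation is the verification that $G \in C(A^{\op})$, which rests on the abelianness of $C(A^{\op})$ and on $-\otimes_A P$ being finitely presented whenever $P \in \mod A$.
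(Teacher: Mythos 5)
Your proof is correct and follows essentially the same route as the paper's. The paper transports $F$ to $C(A^{\op})$ by invoking the known exact duality $d\colon C(A)\to C(A^{\op})$, $\Hom_A(X,-)\mapsto(-)\otimes_A X$ (cited from Prest), whereas you construct $dF$ by hand from a presentation of $F$ and check directly that the resulting kernel functor $G=\ker\bigl((-)\otimes_A P_0\to(-)\otimes_A P_1\bigr)$ lies in $C(A^{\op})$; the identification $DF(X)\cong G(DX)$ on finite-dimensional modules is then the same computation, via the isomorphism $\Hom_A(P_i,X)\cong D(DX\otimes_A P_i)$, that the paper records as the bijection $\Hom_A(X,DY)\cong D(Y\otimes_A X)$.
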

\begin{proof} Let $F \in C(A)$ such that $\mathcal{X} = \mathcal{X}_F$. The standard $k$-duality $D = \Hom_{k}(-,k)$ induces a duality between $\fin \Mod A$ and $\fin \Mod A^\op$. Consider the second duality {$d\colon C(A) \rightarrow C(A^\op)$} defined by $\Hom_{A}(X,-) \mapsto (-)\otimes_A X$ for $X\in \mod A$, see for example \cite[Theorem 10.3.4]{Prest}. The $k$-linear bijection
\begin{align*}
    \Hom_{A}(X,DY) \cong D(Y \otimes_A X), \quad f\mapsto (y \otimes x\mapsto f(x)(y))
\end{align*}
shows that $G(DY) = 0$ if and only if $dG(Y) = 0$ for $G\in C(A)$ and $Y \in \Mod A^\op$. Indeed, this is immediate for $G = \Hom_{A}(X,-)$ and in general consider a projective resolution of $G$ in $C(A)$. It follows that 
$(\fin \mathcal{X})^\op \simeq \fin \mathcal{Y}$ via $D$ for $\mathcal{Y} = \mathcal{X}_{dF}$.
\end{proof}

We provide examples for the full subcategory of finite dimensional modules in a constructible subcategory of the module category of a finite dimensional $k$-algebra. By \Cref{int} (2) every finite intersection of them is again such a category.

\begin{exa}\label{exa} \rm Let $A$ be a finite dimensional $k$-algebra. The following are categories of the form $\fin \mathcal{X}$, where $\mathcal{X}$ is a constructible subcategory of $\Mod A$. 
\begin{itemize} 
\item[(1)] Every full additive subcategory of $\mod A$ containing precisely finitely many isomorphism classes of indecomposable modules.
\item[(2)] Every full additive subcategory of $\mod A$ containing all but finitely many isomorphism classes of indecomposable modules. 
\item[(3)] Hom- and Ext$^n$-orthogonals of a finite dimensional module within $\mod A$ with $n \in \mathbb{N}$.
    \item[(4)] The category of all modules in $\mod A$ with projective (injective) dimension bounded by some $n \in \mathbb{N}$. 
    \item[(5)] The category of all modules in $\mod A$ that are generated (cogenerated) by a module in $\mod A$.
    \item[(6)] Every functorially finite torsion(-free) class in $\mod A$. 
    \item[(7)] The $\Delta$-good module category $\mathcal{F}(\Delta)$, which consists of all modules in $\mod A$ filtered my standard modules, if $A$ is a quasi-hereditary algebra.
\end{itemize}
\end{exa}
\begin{proof} Throughout we make use of duality, see \Cref{duality}.

(1), (2) Consider finitely many indecomposable modules $X_1, \dots, X_n$ in $\mod A$. The set  $\mathcal{U} = \{X_1, \dots, X_n\}$ as well as $\Ind \mathcal{A} \setminus \mathcal{U}$ is closed in $\Ind A$ by Section 1.5 (4). Hence, both sets are compact since so is $\Ind A$, see {Section 1.5 (1)}. It follows that $\mathcal{U} = \mathcal{X}_F \cap \Ind \mathcal{A}$ and $\Ind \mathcal{A} \setminus \mathcal{U} = \mathcal{X}_G \cap \Ind \mathcal{A}$ for $F,G \in C(A)$ by Proposition 1.2. Now $\fin \mathcal{X}_F$ (respectively $\fin \mathcal{X}_G$) is the full additive subcategory of $\mod A$ containing precisely the  indecomposable modules $X_1, \dots, X_n$ up to isomorphism (respectively all but $X_1, \dots, X_n$).

(3) It is well-known that the functors $\Hom_A(M,-)$ and $\Ext_A^n(M,-)$ are finitely presented for $M\in \mod A$, so contained in $C(A)$. The case of $\Hom_A(-,M)$ and $\Ext_A^n(-,M)$ follows by duality.

(4) The inequality $\text{pdim}\,X \leq n$, respectively $\text{idim}\, X \leq n$, holds if and only if $\Ext_{A}^{n+1} (X,S) = 0$, respectively $\Ext_A^{n+1} (S,X) = 0$, where $S$ is the direct sum of the finitely many simple modules in $\mod A$. Thus, we can apply (3). 

(5) Recall that for $M\in \mod A$ the category $\gen M$ of all modules in $\mod A$ that are generated by $M$ equals the collection of all $X\in \mod A$ that admit an epimorphism $M^n \rightarrow X$ for some $n\in \mathbb{N}$. For $X\in \Mod A$ let
\begin{align*}
    \textnormal{ev}_X \colon M \otimes_k \Hom_A(M,X) \longrightarrow X,\quad x \otimes f \mapsto f(x). 
\end{align*}
The functor $F\colon \Mod A \rightarrow \Ab, X\mapsto \coker \textnormal{ev}_X$ commutes with filtered colimits and products, since so do $M\otimes_k (-), \Hom_{A}(M,-)$ and $\coker (-)$. Hence, $F\in C(A)$. Further, for $X\in \mod A$ we have $X\in \gen M$ if and only if $ F(X) = 0$. It follows that $\fin \mathcal{X}_F = \gen M$. By duality, the category $\cogen M$ of all modules in $\mod A$ cogenerated by $M$ equals $\fin \mathcal{X}_G$ for some $G\in C(A)$.

(6) Let $(\mathcal{T}, \mathcal{F})$ be a torsion pair in $\mod A$. Then $\mathcal{T} = \gen M$ for $M\in \mod A$ if and only if $\mathcal{T}$ is functorially finite if and only if $\mathcal{F}$ is functorially finite if and only if  $\mathcal{F} = \cogen N$ for $N \in \mod A$ \cite{Smalo}. Thus, the claim follows by (5).  

(7) A precise definition of $\mathcal{F}(\Delta)$ is given in \cite[Part II]{Ringel1}. Let $\nabla_1, \dots, \nabla_n$ be a complete list of costandard modules. Then
\begin{align*}
    \mathcal{F}(\Delta) = \{X\in \mod A \mid \Ext_A^1(X, \bigoplus_{i=1}^n \nabla_i ) = 0\}
\end{align*}
by \cite[Theorem 4*]{Ringel1} and we can apply (2).
\end{proof}

Let $k$ be a field, $A$ a $k$-algebra and $\mathcal{X}$ a constructible subcategory of $\Mod A$. We say that $\mathcal{X}$ is of
\begin{itemize}
    \item[(1)] \emph{infinite type} if there are infinitely many non-isomorphic finite dimensional indecomposable modules in $\fin \mathcal{X}$,
    \item[(2)] \emph{unbounded type} if there is no bound on the dimension of indecomposable modules in $\fin \mathcal{X}$, and
    \item[(3)] \emph{strongly unbounded type} if there is a sequence  $n_1 < n_2 < \dots $ of natural numbers such that for all $i \in \mathbb{N}$ there are infinitely many non-isomorphic indecomposable modules of dimension $n_i$ in $\fin \mathcal{X}$.
\end{itemize}
Cleary, strongly unbounded type implies unbounded type, which implies infinite type. We are concerned to what extend the non-trivial implications hold.

\section{Brauer-Thrall I}

The first Brauer-Thrall conjecture states that infinite type implies unbounded type for the module category of a finite dimensional algebra. It was first proven by Roiter \cite{Roiter}. In \cite{BG} it was shown that the statement can be extended to finitely generated algebras $A$ over a field $k$. We prove a variant of the first Brauer-Thrall conjecture for constructible subcategories $\mathcal{X}$ of $\Mod A$. That is, if $\mathcal{X}$ is of infinite type, then $\mathcal{X}$ is of unbounded type. This is done via the scheme of $n$-dimensional $A$-modules \cite{Morrison}, which we describe next, and its connection to the Ziegler spectrum. For an overview of the key steps of the proof, see the introduction.

Let $A \cong k\langle x_1, \dots, x_m\rangle /I$. An $A$-module of dimension $n\in \mathbb{N}$ is given by $n\times n$ matrices $M_1, \dots, M_m$ over $k$ fulfilling the relations $I$. This induces polynomial relations on the entries of the matrices, so they form an affine $k$-scheme with coordinate ring $C$, which is a finitely generated commutative $k$-algebra. We call 
\begin{align*}
\vMod(A,n) = \Spec(C) = \{p \subseteq C \mid p \text{ is a prime ideal}\}    
\end{align*}
the \emph{scheme of $n$-dimensional $A$-modules}. A maximal ideal $m \subseteq C$ with $C/m \cong k$ is called a \emph{$k$-point}.

\begin{rem}\label{geom}\rm The following is known, see \cite[Remark 4.7]{Ringel0}. Let $n\in \mathbb{N}$ and $C$ be the coordinate ring of $\text{Mod}(A,n)$. There is an $A$-$C$-bimodule $M$ free over $C$ of rank $n$ such that every $n$-dimensional $A$-module is isomorphic to some $M \otimes_C C/m$ for a $k$-point $m\in \vMod(A,n)$. In general, an arbitrary point $p\in \vMod(A,n)$ corresponds to the $A$-module $M \otimes_k \Frac(C/p)$. We illustrate the construction of $M$ (and $C$).

Let $A \cong k \langle x_1, \dots, x_m \rangle/I$ and $k[t]$ be the polynomial ring over $k$ in $m n^2$-many variables. Consider $n\times n$ matrices $M_1, \dots, M_m$ over $k[t]$ such that each entry is a different indeterminate of $k[t]$. Let $J \subseteq k[t]$ be the smallest ideal such that $p(M_1, \dots, M_m) = 0$ over $k[t]/J$ for all $p \in I$. Then the matrices $M_1, \dots, M_m$ over $C = k[t]/J$ determine an $A$-$C$-bimodule $M$, which is free over $C$ of rank $n$. An $n$-dimensional $A$-module $X$ is given by $n\times n$ matrices $N_{1}, \dots, N_{m}$ over $k$. These matrices are obtained by evaluating $M_1, \dots, M_m$ at a suitable point in $k^{mn^2}$, which corresponds to a maximal ideal $m\subseteq C$ such that $C/m \cong k$ and $M \otimes_C C/m \cong X$.
\end{rem}

In what follows fix $n \in \mathbb{N}$, the coordinate ring $C$ of the scheme of $n$-dimensional $A$-modules $\vMod(A,n)$, and the $A$-$C$-bimodule $M$ as in \Cref{geom}.

The \emph{Zariski topology} on $\vMod(A,n)$ has the sets $D_a = \{p\in \vMod(A,n) \mid a \notin p\}$ with $a\in C$ as a basis of open sets. A different topology on $\vMod(A,n)$ is more important for our purposes. The \emph{constructible topology} on $\vMod(A,n)$ is the coarsest topology such that $D_a$ is both open and closed for all $a\in C$, see \cite[Tag 08YF]{sp} for some details. Thus, also all finite combinations of unions and intersections of the sets $D_a$ as well as $\vMod(A,n) \setminus D_a$ are open and closed. Such a set is \emph{constructible} in $\vMod(A,n)$ and the constructible sets coincide with the compact open sets in the constructible topology of $\vMod(A,n)$, see \cite[Tag 0905]{sp}. There is a nice connection with the Ziegler spectrum of $C$. 

\begin{thm}\label{constr}\cite[p. 555]{Herzog} The map
\begin{align*}
    \vMod(A,n) \longrightarrow \Ind C, \quad p \mapsto \Frac(C/p)
\end{align*}
induces a homeomorphism between $\vMod(A,n)$ with the constructible topology and the closed set $\{X \in \Ind C \mid \lend (X) = 1\} \subseteq \Ind C$ with the subspace topology.
\end{thm}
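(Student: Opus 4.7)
The plan is to carry out three main steps: (i) verify that $\phi\colon p\mapsto \kappa(p):=\Frac(C/p)$ gives a bijection between $\Spec C=\vMod(A,n)$ and the target $\{X\in\Ind C\mid \lend(X)=1\}$; (ii) observe that the target is closed in $\Ind C$; and (iii) identify the constructible topology on the source with the subspace topology on the target by working with sub-bases on both sides.

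For (i), each $\kappa(p)$ is a field, so $\End_C(\kappa(p))=\kappa(p)$ and $\kappa(p)$ is indecomposable of endolength $1$; being endofinite, it is pure-injective and thus lies in $\Ind C$. Surjectivity onto $\{\lend=1\}$ uses that an indecomposable pure-injective $X$ with $\lend(X)=1$ is a simple cyclic module over the local ring $\End_C(X)$, which forces $\End_C(X)$ to be a division ring through which the $C$-action factors; the kernel is a prime $p$ with $X\cong\kappa(p)$. This identification is the content of the cited Herzog result. For (ii), the set $\{X\in \Ind C\mid \lend(X)\le 1\}$ is closed in $\Ind C$ by Section 1.5 (3), and it coincides with $\{\lend=1\}$ on $\Ind C$ since indecomposable pure-injectives are nonzero.

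For the openness direction of (iii), I will exhibit, for each $a\in C$, finitely presented functors in $C(C)$ detecting whether $a$ is zero or invertible on $\kappa(p)$: set $F_a(X):=X/aX$, given as the cokernel of $\Hom_C(C,-)\xrightarrow{a\cdot}\Hom_C(C,-)$, and $G_a(X):=aX=X/\ker(a\cdot)$, given as the cokernel of $\Hom_C(C/(a),-)\to\Hom_C(C,-)$. On $\kappa(p)$ multiplication by $a$ is zero when $a\in p$ and bijective when $a\notin p$, so $F_a(\kappa(p))\ne 0\iff a\in p$ and $G_a(\kappa(p))\ne 0\iff a\notin p$. By Proposition 1.2 this exhibits $\phi(V(a))$ and $\phi(D_a)$ as compact open subsets of $\{\lend=1\}$ in the subspace topology; since $\{D_a,V(a)\mid a\in C\}$ is a sub-basis of the constructible topology, this settles openness.

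For the continuity direction, I need to show that for any $F\in C(C)$ the locus $\{p\in\Spec C\mid F(\kappa(p))\ne 0\}$ is constructible. Choose a presentation $\Hom_C(N,-)\to\Hom_C(M,-)\to F\to 0$ with $M,N\in\mod C$, together with free presentations of $M$ and $N$ themselves (available since $C$ is Noetherian). Evaluating at $\kappa(p)$, the group $F(\kappa(p))$ becomes the cokernel of a linear map between finite-dimensional $\kappa(p)$-vector spaces whose dimensions and whose matrix entries are read off from fixed matrices over $C$ reduced modulo $p$; each rank stratum of such matrices is Zariski locally closed by determinantal conditions. The condition $F(\kappa(p))\ne 0$ thus becomes a finite Boolean combination of locally closed subsets of $\Spec C$, hence constructible. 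The main obstacle is the classification in step (i), for which we appeal to Herzog; the continuity step additionally requires the indicated stratification to deal with the jumps in the dimensions of the relevant $\Hom$-spaces as $p$ varies.
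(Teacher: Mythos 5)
The paper does not prove this theorem itself; it cites it from Herzog \cite[p.~555]{Herzog}. So there is no internal proof to compare against, and your blind argument is in fact supplying the justification the paper outsources.

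Your proof is correct. Step (i) is sound: for $X$ indecomposable pure-injective with $\lend(X)=1$, faithfulness of $X$ over the local ring $\End_C(X)$ together with $JX \subsetneq X$ forces the Jacobson radical to vanish, so $\End_C(X)$ is a division ring $D$; the $C$-action lands in $Z(D)$, its kernel is a prime $p$, and indecomposability as a $C$-module forces $\dim_{\Frac(C/p)} D = 1$, whence $X \cong \Frac(C/p)$. Step (ii) correctly invokes Section 1.5~(3). In step (iii), the functors $F_a(X) = X/aX$ and $G_a(X) = X/\ker(a\cdot)$ are indeed finitely presented and detect $V(a)$ and $D_a$ on endolength-one points, and since $\{V(a), D_a \mid a\in C\}$ is a sub-basis of clopen sets for the constructible topology and $\phi$ is a bijection, this gives openness of $\phi$. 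For continuity, your rank-stratification argument is the standard one: after free presentations one reduces non-vanishing of $F(\Frac(C/p))$ to a Boolean combination of determinantal rank conditions on matrices over $C$ reduced modulo $p$, hence to a constructible condition.

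One remark worth recording: the continuity half (which you flag as the more delicate one) can be bypassed entirely. The set $\{X \in \Ind C \mid \lend(X)=1\}$ is closed in the compact space $\Ind C$, hence compact, and $\Spec(C)$ with the constructible topology is compact Hausdorff. Once you have proved that $\phi$ is a continuous open bijection in one direction --- say that $\phi^{-1}$ is continuous, which is your openness argument --- the map $\phi^{-1}$ is a continuous bijection from a compact space to a Hausdorff space and is therefore automatically a homeomorphism. This trades your stratification argument for two already-established compactness facts and shortens the proof noticeably.
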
 

For a full additive subcategory $\mathcal{X} \subseteq \Mod A$ let $\vMod(\mathcal{X},n)$ be the set of all $p \in \vMod(A,n)$ with $M \otimes_C \Frac(C/p) \in \mathcal{X}$. In particular, if $X$ is an $n$-dimensional $A$-module corresponding to a $k$-point $m \in \vMod(A,n)$, then $X \in \mathcal{X}$ if and only if \mbox{$m \in \vMod(\mathcal{X}, n)$}, see \Cref{geom}.

\begin{lem}\label{constru} Let $\mathcal{X}$ be a full additive subcategory of $\Mod A$.\begin{itemize}
    \item[(1)] If $\mathcal{X}$ is a definable subcategory, then $\vMod(\mathcal{X},n)$ is closed in the constructible topology of $\vMod(A,n)$.
    \item[(2)] If $\mathcal{X}$ is a constructible subcategory, then $\vMod(\mathcal{X},n)$ is a constructible set in $\vMod(A,n)$. 
\end{itemize}
\end{lem}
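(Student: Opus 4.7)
The plan is to pull $\mathcal{X}$ back along the tensor functor $f = M \otimes_C -\colon \Mod C \to \Mod A$ to a definable (resp.\ constructible) subcategory $\mathcal{Y} = f^{-1}(\mathcal{X})$ of $\Mod C$, and transport the resulting Ziegler-side information to $\vMod(A,n)$ via the homeomorphism of \Cref{constr}.

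First I would check that $f$ is a definable functor. Since $M$ is $C$-free of rank $n$, $f$ is exact and commutes with products and with filtered colimits. Consequently, if $\mathcal{X} = \mathcal{X}_S$ for some $S \subseteq C(A)$, then $\mathcal{Y} = \mathcal{X}_{S'}$ with $S' = \{F \circ f \mid F \in S\}$, and each $F \circ f$ lies in $C(C)$ because it is additive and commutes with filtered colimits and products (and hence is finitely presented by Section 1.4). When $S$ is a singleton, so is $S'$, so $\mathcal{Y}$ inherits constructibility from $\mathcal{X}$.

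By \Cref{geom}, the point $p \in \vMod(A,n)$ corresponds to $f(\Frac(C/p))$, so the homeomorphism of \Cref{constr} identifies $\vMod(\mathcal{X},n)$ with $\mathcal{Y} \cap Z$, where $Z = \{X \in \Ind C \mid \lend(X) = 1\}$. For (1), since $\mathcal{Y}$ is definable, $\mathcal{Y} \cap \Ind C$ is closed in $\Ind C$ by Theorem 1.2, so $\mathcal{Y} \cap Z$ is closed in the subspace $Z$; this gives that $\vMod(\mathcal{X},n)$ is closed in the constructible topology.

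For (2), write $\mathcal{X} = \mathcal{X}_F$ and $\mathcal{Y} = \mathcal{X}_{F \circ f}$ as above. By Proposition 1.2 the complement $\Ind C \setminus \mathcal{Y}$ is open and compact in $\Ind C$, so its intersection with $Z$ is open in $Z$ and compact (as a closed subset of a compact set). Because \Cref{constr} makes $Z$ homeomorphic to $\vMod(A,n)$ with its constructible topology, $Z$ is Hausdorff; hence this compact open subset is also closed, and $\mathcal{Y} \cap Z$ is clopen in $Z$. Therefore $\vMod(\mathcal{X},n)$ is clopen in the constructible topology on $\vMod(A,n)$, which, since $\Spec(C)$ is a spectral space, is equivalent to being constructible in the Zariski topology. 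The delicate point is the pullback argument --- verifying that $f$ is definable and that $F \circ f$ remains in $C(C)$; the rest is a formal combination of the Ziegler-side correspondences with the Hausdorffness of the constructible topology.
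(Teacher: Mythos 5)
Your proposal is correct and follows essentially the same route as the paper: pull $\mathcal{X}$ back along $f = M\otimes_C -$ to a definable (resp.\ constructible) subcategory of $\Mod C$, then transport through the homeomorphism of \Cref{constr}, intersecting with the closed set of endosimple points. The only cosmetic difference is in the last step of (2): you pass through Hausdorffness of the constructible topology to turn "compact open" into "clopen" and then to "constructible," whereas the paper invokes directly that the compact open sets of the constructible topology are exactly the constructible sets (both are immediate from the spectral-space facts cited).
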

\begin{proof} If $\mathcal{X}$ is definable, then $\mathcal{X} = \mathcal{X}_S$ for a set of functors $S\subseteq C(A)$, see Section 1.2. The functor $f = M \otimes_C (-) \colon \Mod C \to \Mod A$ commutes with filtered colimits and products since $M$ is free over $C$ of rank $n$. Thus, $T = \{F\circ f \mid F \in S\} \subseteq C(C)$. Now $\mathcal{X}_T \subseteq \Mod C$ equals the collection of all $Y \in \Mod C$ with $M \otimes_C Y \in \mathcal{X}$. Consider the corresponding closed set $\mathcal{U} = \mathcal{X}_T \cap \Ind C$ and $\mathcal{V} = \{X\in \Ind C \mid \lend(X) = 1\}$, which is also closed in $\Ind C$ by Section 1.5 (3). By definition $\vMod(\mathcal{X},n)$ equals the preimage of $\mathcal{U} \cap \mathcal{V}$ under the homeomorphism in \Cref{constr}. It follows that $\vMod(\mathcal{X}, n)$ is closed.

If $\mathcal{X}$ is constructible, then $S$ and $T$ are singletons and so $\Ind C \setminus \mathcal{U}$ is compact by Proposition 1.2. Hence, $\mathcal{V} \setminus \mathcal{U} = \mathcal{V} \cap (\Ind C\setminus \mathcal{U})$ is compact as the intersection of a closed and compact set. It follows that $\vMod(A,n) \setminus \vMod(\mathcal{X},n)$, which is the preimage of $\mathcal{V} \setminus \mathcal{U}$ under the homeomorphism in \Cref{constr}, is a compact open set in the constructible topology of $\vMod(A,n)$ and thus a constructible set. Since the complement of a constructible set is again constructible, also $\vMod(\mathcal{X},n)$ is constructible.
\end{proof}

Next, fix a constructible subcategory $\mathcal{X}$ of $\Mod A$. We would like to show that infinite type implies unbounded type for $\mathcal{X}$. To do so, we may assume that there are infinitely many non-isomorphic modules of dimension $n$ in $\mathcal{X}$ and deduce that $\mathcal{X}$ is of unbounded type. In particular, in this case $k$ must be an infinite field since otherwise there are only finitely many $A$-module structures on $k^n$ as $A$ is finitely generated. Throughout, this assumption is made. By the results of Andres Fernandez Herrero in the appendix we show the following.

\begin{prop}\label{ass} There is a prime ideal $p\subseteq C$ in $\vMod(\mathcal{X},n)$ and positive integer $l>0$ such that $C/p$ has Krull dimension 1 with the following property: There are $|k|$-many non-isomorphic indecomposable direct summands of the $A$-modules $M \otimes_C C/m$ for maximal ideals $m\supseteq p$ in $\vMod(\mathcal{X},n)$ with $\dim C/m \leq l$.
\end{prop}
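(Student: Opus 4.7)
The plan is to reduce the statement to a purely geometric assertion about the scheme $\vMod(A,n)$ and then invoke the appendix by Fernandez Herrero. First I would observe that, by \Cref{geom}, isomorphism classes of $n$-dimensional $A$-modules are in bijection with $GL_n(k)$-orbits of $k$-points of $\vMod(A,n)$. The standing assumption that $\mathcal{X}$ contains infinitely many non-isomorphic $n$-dimensional modules therefore translates into the statement that the set of $k$-points of $\vMod(\mathcal{X},n)$ meets infinitely many distinct $GL_n(k)$-orbits. Combined with \Cref{constru}(2), which ensures that $\vMod(\mathcal{X},n)$ is a constructible subset of $\vMod(A,n)$, we are in the setting for which the appendix is designed.

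Next I would apply the appendix theorem to produce a one-dimensional irreducible closed subvariety $V \subseteq \vMod(A,n)$ whose generic point lies in $\vMod(\mathcal{X},n)$, together with an integer $l>0$, such that the closed points of $V \cap \vMod(\mathcal{X},n)$ of residue degree at most $l$ supply $A$-modules $M \otimes_C C/m$ whose indecomposable direct summands, taken together, exhaust $|k|$-many pairwise non-isomorphic isomorphism classes. Letting $p$ be the prime ideal defining $V$ immediately yields the required data $(p,l)$: then $C/p$ has Krull dimension $1$ and $p \in \vMod(\mathcal{X},n)$. A minor subtlety is that $\vMod(\mathcal{X},n)$ need not be closed in $V$, but since it is constructible and contains the generic point of $V$, it must contain a dense open subset of $V$, and in particular $|k|$-many closed points of bounded residue degree.

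The hard part, and really the only nontrivial content of the proposition, is the geometric theorem in the appendix itself. I expect its proof to combine generic flatness along the curve with an analysis of the $GL_n$-action, ruling out the possibility that all indecomposable summands appearing along a one-parameter family of modules could be drawn from a fixed set of cardinality smaller than $|k|$; the use of $k$ being infinite (forced here by the assumption that $\mathcal{X}$ contains infinitely many non-isomorphic $n$-dimensional modules, together with $A$ being finitely generated) is essential. Inside the body of the paper, my proof of the proposition reduces to the bookkeeping outlined above: verifying the appendix's hypotheses for $\vMod(\mathcal{X},n)$ and translating its conclusion into the language of the prime $p$, the bound $l$, and maximal ideals $m \supseteq p$.
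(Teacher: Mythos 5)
Your proposal is correct and follows essentially the same route as the paper: verify that $\vMod(\mathcal{X},n)$ is a constructible, $GL_n$-invariant subset containing $k$-points in infinitely many distinct orbits (using \Cref{geom} and \Cref{constru}), then apply \Cref{thm: main result appendix} and take $p$ to be the generic point of the resulting curve. The only slight imprecision is in your description of the appendix output — it produces a \emph{locally closed} curve whose entire underlying space already lies in $\vMod(\mathcal{X},n)$ (so the density/constructibility subtlety you flag is already absorbed into the appendix theorem rather than needing a separate argument), but this does not affect the validity of your argument.
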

\begin{proof} Geometrically, $p$ corresponds to a curve inside $\vMod(\mathcal{X},n)$. There is a group scheme $GL_n$ acting on $\vMod(A,n)$, see the appendix. Clearly, $\vMod(\mathcal{X}, n)$ is invariant under this action and $\vMod(\mathcal{X},n) \subseteq \vMod(A,n)$ is a constructible set by \Cref{constru}. By the assumption on $\mathcal{X}$ there is an infinite set of non-isomorphic $A$-modules of dimension $n$ over $k$ that correspond to $k$-points in $\vMod(\mathcal{X},n)$. Thus, all conditions are fulfilled to apply \Cref{thm: main result appendix} and the existence of the desired curve follows (take $p$ to be the generic point of the curve). 
\end{proof}

Recall that a prime ideal is \emph{regular} if the localisation at the prime ideal is a regular local ring. The set of regular elements in $\Spec(C/p)$ is open in the Zariski topology, so all but finitely many maximal ideals are regular (since $C/p$ has Krull dimension 1). In particular, there is a finite localisation of $C/p$, which we denote by $B$, such that every maximal ideal of $B$ is regular, so $B$ is a Dedekind domain. Further, all but finitely many maximal ideals $m \supseteq p$ of $C$ correspond to maximal ideals $\tilde{m}$ of $B$ such that $C/m \cong B/{\tilde{m}}$, so by \Cref{ass} there are infinitely many non-isomorphic indecomposable $A$-modules that appear as direct summands of $M\otimes_C B/\tilde{m}$ for maximal ideals $\tilde{m} \subseteq B$. The Ziegler spectrum of a Dedekind domain is well-understood.

\begin{exa}\label{ded} \cite[Example 9.5]{Ziegler} \rm The points in $\Ind B$ are
\begin{align*}
    B/m^i, \qquad \hat{B}_m, \qquad P_m, \qquad \Frac(B) 
\end{align*}
with $i>0$, $m\subseteq B$ maximal ideals, $\hat{B}_m$ the completion at $m$ and $P_m$ the injective hull of $B/m$. The following describes a basis of open neighbourhoods of each point.
    \begin{center}
        \begin{tabular}{ |c|c| }
        \hline basis of open neighbourhoods & point  \\ 
            \hhline{|=|=|} $\{B/m^i\}$ & $B/m^i$  \\
            \hline $\{\hat{B}_m, B/m^i \mid i > j\}$ with $j>0$ & $\hat{B}_m$\\ 
            \hline $\{P_m, B/m^i \mid i > j\}$ with $j>0$  & $P_m$\\
            \hline $\Ind B \setminus \{B/m_1^{i_1}, \dots, B/m_s^{i_s}\}$ for  & $\Frac(B)$\\
         maximal ideals $m_j \subseteq B$ and $i_j>0$ & \\
            \hline
        \end{tabular}
    \end{center}
\vphantom\\
\\
It follows that a set $\mathcal{U} \subseteq \Ind B$ is closed if it fulfills the following two conditions: If $\hat{B}_m \in \mathcal{U}$ or $P_m \in \mathcal{U}$ or $|\mathcal{U}| = \infty$, then $\Frac(B) \in \mathcal{U}$; if $\mathcal{U}$ contains  $B/{m}^i$ for infinitely many $i>0$, then $\hat{B}_m, P_m \in \mathcal{U}$.
\end{exa}

By this description of $\Ind B$, we show a dichotomy for constructible subcategories of $\Mod B$.

\begin{lem}\label{cased} Let $\mathcal{Y}$ be a constructible subcategory of $\Mod B$. Then $\mathcal{Y}$ is of infinite type if and only if $\Frac(B) \in \mathcal{Y}$. Moreover, in this case $B/m^i \in \fin \mathcal{Y}$ for all $i> 0$ and all but finitely many maximal ideals $m \subseteq B$. 
\end{lem}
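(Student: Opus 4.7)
The plan is to analyse the closed set $\mathcal{U} := \mathcal{Y} \cap \Ind B$ via the explicit topology recorded in \Cref{ded}. Since $\mathcal{Y}$ is constructible, Proposition 1.2 tells us that the complement $\mathcal{V} := \Ind B \setminus \mathcal{U}$ is a compact open subset of $\Ind B$; this compactness is the engine of the moreover part.

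First I would identify $\fin \mathcal{Y}$ combinatorially. Because $B$ is a finitely generated $k$-algebra of Krull dimension one, the Nullstellensatz gives $\dim_k B/m^i < \infty$ for every maximal ideal $m \subseteq B$ and every $i > 0$. Conversely, a $B$-module of finite $k$-dimension is finitely generated and torsion (the ring $B$ has infinite $k$-dimension, so no free summand can appear), hence, being a module over the Dedekind domain $B$, decomposes into indecomposables of the form $B/m^i$. Thus the finite dimensional indecomposables in $\fin \mathcal{Y}$ are precisely the $B/m^i$ that lie in $\mathcal{U}$.

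For the forward direction, suppose $\mathcal{Y}$ is of infinite type. Then $\mathcal{U}$ contains infinitely many modules $B/m^i$, so $|\mathcal{U}| = \infty$, and the closure criterion in \Cref{ded} forces $\Frac(B) \in \mathcal{U}$, i.e.\ $\Frac(B) \in \mathcal{Y}$.

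For the backward direction and the moreover part, assume $\Frac(B) \in \mathcal{Y}$, so $\Frac(B) \notin \mathcal{V}$. Inspecting the basis in \Cref{ded}, the basic open subsets of $\Ind B$ that do not contain $\Frac(B)$ are exactly those of the three forms $\{B/m^i\}$, $\{\hat{B}_m\} \cup \{B/m^i \mid i > j\}$, and $\{P_m\} \cup \{B/m^i \mid i > j\}$, each associated to a single maximal ideal $m$. Covering $\mathcal{V}$ by such basic opens and invoking compactness yields a finite subcover, and consequently only finitely many maximal ideals $m_1, \dots, m_s$ are involved in any point of $\mathcal{V}$. Hence for every maximal ideal $m \notin \{m_1, \dots, m_s\}$ and every $i > 0$ we have $B/m^i \in \mathcal{U}$, and the $k$-finiteness observed above places $B/m^i \in \fin \mathcal{Y}$. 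In particular $\mathcal{Y}$ is of infinite type, which also establishes the ``if'' direction of the equivalence. The only subtle step is the observation that the basic opens avoiding $\Frac(B)$ each involve a single maximal ideal; once this is noted, the proof is a direct compactness argument on top of \Cref{ded}.
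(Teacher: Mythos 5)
Your proof is correct and follows essentially the same route as the paper's: identify $\fin\mathcal{Y}$ with the modules $B/m^i$, use compactness of $\Ind B \setminus \mathcal{U}$ to write it as a finite union of the basic opens from \Cref{ded}, and read off the conclusion. You simply spell out the ``inspection'' step that the paper leaves implicit, in particular the observation that every basic open avoiding $\Frac(B)$ involves only one maximal ideal.
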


\begin{proof} The modules $B/m^i$ with $m \subseteq B$ maximal and $i>0$ are precisely the finite dimensional indecomposable $B$-modules. For the closed set $\mathcal{U} = \mathcal{Y} \cap \Ind B$ the complement $\Ind B\setminus \mathcal{U}$ is compact by \Cref{int} (1). Thus, $\Ind B \setminus \mathcal{U}$ equals a finite union of the open neighbourhoods in \Cref{ded}. Now the claim follows by inspection.
\end{proof}

We investigate the consequences for the constructible subcategory $\mathcal{X} \subseteq \Mod A$. For a maximal ideal $m\subseteq B$ we call $T_m = \{M \otimes_C B/m^i \mid i>0\}$ the \emph{tube} at $m$. Note that $T_m$ is a collection of finite dimensional $A$-modules that are not necessarily indecomposable.

\begin{prop}\label{contains} The constructible subcategory $\mathcal{X} \subseteq \Mod A$ contains the tube $T_m$ for all but finitely many maximal ideals $m\subseteq B$.
\end{prop}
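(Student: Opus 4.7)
The plan is to transfer the problem from $\Mod A$ back to $\Mod B$ via the tensor functor $g = M \otimes_C (-) \colon \Mod B \to \Mod A$, where a $B$-module is viewed as a $C$-module through the composition $C \twoheadrightarrow C/p \hookrightarrow B$, and then invoke the dichotomy in \Cref{cased}. Writing $g$ as $N \otimes_B (-)$ with $N = M \otimes_C B$, we see that $N$ is free of rank $n$ over $B$, so $g$ commutes with filtered colimits and products and is therefore a definable functor.

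Write $\mathcal{X} = \mathcal{X}_F$ for some $F \in C(A)$. I would then consider the composition $F \circ g \colon \Mod B \to \Ab$. Since both $F$ and $g$ commute with filtered colimits and products, so does $F \circ g$, placing it in $C(B)$. Setting $\mathcal{Y} = \mathcal{X}_{F \circ g}$, we obtain a constructible subcategory of $\Mod B$ characterised by $Y \in \mathcal{Y}$ if and only if $g(Y) = M \otimes_C Y \in \mathcal{X}$.

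The crucial point is that the prime $p$ was produced by \Cref{ass} to lie in $\vMod(\mathcal{X},n)$, meaning $M \otimes_C \Frac(C/p) \in \mathcal{X}$. Because $B$ is a finite localisation of $C/p$ we have $\Frac(B) = \Frac(C/p)$, so $g(\Frac(B)) \in \mathcal{X}$, i.e.\ $\Frac(B) \in \mathcal{Y}$. Applying \Cref{cased} to the constructible subcategory $\mathcal{Y}$ then yields $B/m^i \in \fin \mathcal{Y}$ for every $i>0$ and all but finitely many maximal ideals $m \subseteq B$. Unwinding the definition of $\mathcal{Y}$, this reads $M \otimes_C B/m^i \in \mathcal{X}$ for all $i>0$, which is exactly the statement that $T_m \subseteq \mathcal{X}$ for all but finitely many $m$. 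There is no serious obstacle here: the only real verification is that $g$ is definable (so that the pullback of a constructible subcategory is constructible), after which \Cref{cased} does all the work.
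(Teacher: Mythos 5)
Your proof is correct and follows essentially the same route as the paper: pull $F$ back along $g = M \otimes_C (-) \colon \Mod B \to \Mod A$ to get a constructible subcategory $\mathcal{Y} = \mathcal{X}_{F \circ g}$ of $\Mod B$, observe $\Frac(B) \in \mathcal{Y}$ from \Cref{ass}, and apply \Cref{cased}. Your remark that $g$ equals $N \otimes_B (-)$ with $N = M \otimes_C B$ free of rank $n$ over $B$ is a slightly more explicit justification of definability than the paper gives, but the argument is otherwise identical.
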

\begin{proof} Let $\mathcal{X} = \mathcal{X}_F$ for $F\in C(A)$ and $f\colon \Mod B \rightarrow \Mod A, X \mapsto M \otimes_C X$. Since $M$ is free of rank $n$ over $C$, it follows that $f$ commutes with filtered colimits and products. Thus, $\mathcal{Y} = \mathcal{X}_{F\circ f}$ is a constructible subcategory of $\Mod B$, which consists of all $X\in \Mod B$ with $f(X) \in \mathcal{X}$. By the choice ot $p$
\begin{align*}
    f(\Frac(B)) = M\otimes_C \Frac(B) \cong M \otimes_C \Frac(C/p) \in \mathcal{X},
\end{align*}
see \Cref{ass}. It follows that $\Frac(B) \in \mathcal{Y}$, so $B/m^i \in \mathcal{Y}$ for all $i>0$ and all but finitely many maximal ideals $m\subseteq B$ by \Cref{cased}. By the definition of $T_m$ it follows that $T_m \subseteq \mathcal{X}$ for all but finitely many maximal ideals $m\subseteq B$.
\end{proof}

\begin{rem} \label{gen}\rm The $A$-module $X =  M \otimes_C \Frac(B)$ is infinite dimensional over $k$ but has finite endolength, since it is of dimension $n$ over $\Frac(B) \subseteq \text{End}_A(X)$. It follows that $X$ decomposes into a (possibly infinite) direct sum of indecomposable modules of finite endolength that involves only finitely many isomorphism classes by \cite[Proposition 4.5]{Crawley-Boevey}. An infinite dimensional indecomposable module of finite endolength is called \emph{generic}.
\end{rem}

It is shown that the infinite dimensional $A$-module $M \otimes_C \Frac(B)$ controls the tubes $T_m$. This requires the following result.

\begin{lem}\label{disc} Let $A$ be a finitely generated $k$-algebra. For a finite dimensional indecomposable module $X \in \Mod A$ the set $\{X\}$ is closed and open in $\Ind A$.
\end{lem}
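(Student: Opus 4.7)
The plan is to reduce the statement to the already-known finite dimensional case in Section 1.5~(4) by means of the embedding of \Cref{embed}. Closedness will actually fall out for free from the endolength criterion, while openness is what requires the reduction.

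First, for closedness, note that $X$ being finite dimensional over $k$ forces $\lend(X) \leq \dim_k X < \infty$, since any composition series of $X$ as an $\End_A(X)$-module is bounded in length by $\dim_k X$. Hence Section~1.5~(2) applies and yields that $\{X\}$ is a closed point of $\Ind A$.

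For openness, I would apply \Cref{embed} to obtain a finite dimensional $k$-algebra $\tilde A$ together with a fully faithful definable functor $f\colon \Mod A \to \Mod \tilde A$ whose essential image $\mathcal{X}$ is a definable subcategory of $\Mod \tilde{A}$ equivalent to $\Mod A$, with $\dim_k f(X) = 2 \dim_k X$ finite. By the discussion after Theorem~1.2, the inclusion $\mathcal{X} \hookrightarrow \Mod \tilde A$ induces a homeomorphism between $\Ind A \simeq \Ind \mathcal{X}$ and the closed subset $\mathcal{U}_{\mathcal{X}} = \mathcal{X}\cap \Ind \tilde A$ of $\Ind \tilde A$, under which $X$ corresponds to $f(X)$. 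Since $\tilde A$ is finite dimensional and $f(X)$ is a finite dimensional indecomposable $\tilde A$-module, Section~1.5~(4) gives that $\{f(X)\}$ is open in $\Ind \tilde A$. Intersecting with $\mathcal{U}_{\mathcal{X}}$ and transporting along the homeomorphism then shows $\{X\}$ is open in $\Ind A$.

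The only step that needs any care is checking that \Cref{embed} really gives a definable subcategory of $\Mod \tilde A$ in the merely finitely generated case (which it does, as stated), so that the homeomorphism between Ziegler spectra is available; otherwise the argument is a direct combination of \Cref{embed}, the subspace realization of $\Ind \mathcal{X}$ inside $\Ind \tilde A$, and items~(2) and~(4) of Section~1.5.
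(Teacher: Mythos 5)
Your proof is correct and follows essentially the same route as the paper's: reduce to the finite dimensional case via \Cref{embed} and the induced homeomorphism between $\Ind A$ and a closed subset of $\Ind \tilde{A}$, then quote Section~1.5~(4). The only (minor) divergence is that you establish closedness directly from the endolength criterion in Section~1.5~(2), which holds over any ring and so avoids the embedding for that half of the claim; the paper instead transports both openness and closedness through the homeomorphism and cites Section~1.5~(4) for both. Either way is fine, and your version is marginally more economical for the closed part.
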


\begin{proof} By \Cref{embed} there exists a finite dimensional $k$-algebra $\tilde{A}$ such that $\Mod A$ is equivalent to a definable subcategory of $\Mod \tilde{A}$. The equivalence induces a homeomorphism between $\Ind A$ and a closed set in $\Ind \tilde{A}$, see Section 1.2, that maps finite dimensional modules to finite dimensional modules. Now the claim follows by Section 1.5 (4).
\end{proof}

\begin{lem}\label{many} Let $Y$ be an indecomposable finite dimensional $A$-module.
\begin{itemize} 
    \item[(1)] The module $M\otimes_C \Frac(B)$ has a generic direct summand.
    \item[(2)] For every maximal ideal $m\subseteq B$ there are infinitely many non-isomorphic indecomposable modules that appear as direct summands of modules in $T_m$. 
    \item[(3)] Let $m_j, j\in J$ be infinitely many maximal ideals of $B$ and $X_j \in T_{m_j}$ such that $Y$ is isomorphic to a direct summand of $X_j$ for all $j\in J$. Then $Y$ is a direct summand of $M\otimes_C \Frac(B)$.
\end{itemize} 
\end{lem}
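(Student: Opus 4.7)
My plan is to transfer everything to $\Mod B$ via the definable functor $f = M \otimes_C (-) \colon \Mod B \to \Mod A$ and invoke Theorem 1.3, reading off the closures of the relevant subsets of $\Ind B$ from \Cref{ded}. The basic topological observation I will use repeatedly is: for any $U \subseteq \Ind B$ the point $\Frac(B)$ lies in $\overline{U}$ whenever $U$ is infinite, each $\hat{B}_{\tilde n}$ or $P_{\tilde n}$ lies in $\overline{U}$ only if $U$ contains $B/\tilde n^{i}$ for infinitely many $i$, and every other point $B/\tilde n^{i}$ is itself isolated.

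For (1) I will take $U = \{B/\tilde m : \tilde m \in W\}$ where $W$ is an infinite set of maximal ideals of $B$ supplied by \Cref{ass} and the discussion that follows it, so that $V \subseteq \Ind A$, the set of indecomposable summands of objects in $f(U)$, is infinite. Each element of $V$ is a summand of a finite-dimensional $A$-module, hence finite-dimensional, hence an isolated point of $\Ind A$ by \Cref{disc}. Compactness of $\Ind A$ forces a limit point $Y^{\ast} \in \overline{V} \setminus V$, which must be infinite-dimensional. Since $\overline{U} = U \cup \{\Frac(B)\}$, Theorem 1.3(2) realises $Y^{\ast}$ as an indecomposable summand of $f(x)$ for some $x \in \overline{U}$; the infinite dimension of $Y^{\ast}$ rules out $x \in U$, forcing $x = \Frac(B)$. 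The endolength bound $\lend(Y^{\ast}) \leq \lend(f(\Frac(B))) \leq n$ recalled in \Cref{gen} then shows $Y^{\ast}$ is generic.

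Part (2) is a similar but simpler argument: fix $m$ and take $U = \{B/m^{i} : i > 0\}$, whose closure is $U \cup \{\hat{B}_{m}, P_{m}, \Frac(B)\}$. If the set $V$ of indecomposable summands of $f(U) = T_{m}$ were finite, it would be a finite union of closed isolated points and therefore already closed, contradicting the fact that by Theorem 1.3(2) $\overline{V}$ must contain the generic summand of $f(\Frac(B))$ produced in (1).

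For (3) the strategy changes. Since $\{Y\}$ is both open and closed in $\Ind A$ by \Cref{disc}, Proposition 1.2 yields $G \in C(A)$ with $\mathcal{X}_{G} \cap \Ind A = \Ind A \setminus \{Y\}$, so $G(Y) \neq 0$ while $G(Y') = 0$ for every other indecomposable pure-injective $Y'$. Additivity of $G$ and the hypothesis that $Y$ is a direct summand of $X_{j}$ give $G(X_{j}) \neq 0$, placing every $B/m_{j}^{i_{j}}$ outside the constructible subcategory $\mathcal{X}_{G \circ f} \subseteq \Mod B$ (the composite $G \circ f$ lies in $C(B)$ by the duality between definable categories and small abelian categories recalled in Section 1.3). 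Since the $m_{j}$ are infinitely many distinct maximal ideals, the dichotomy \Cref{cased} forces $\Frac(B) \notin \mathcal{X}_{G \circ f}$, i.e.\ $G(f(\Frac(B))) \neq 0$. By Theorem 1.3(1) and \Cref{gen} the module $f(\Frac(B))$ is pure-injective of finite endolength and therefore splits as $\bigoplus_{t} Y_{t}^{(\alpha_{t})}$ over finitely many isomorphism classes of indecomposable pure-injectives; the non-vanishing of $G$ on this sum together with the isolating property $G(Y') = 0$ for $Y' \not\cong Y$ forces some $Y_{t} \cong Y$, giving the desired summand. I expect (3) to be the most delicate step, mainly because of verifying that $G \circ f$ is genuinely finitely presented on $\Mod B$ and linking the topological picture to the additive picture; parts (1) and (2) should be rather direct applications of the closure formula once $\overline{U}$ has been read off \Cref{ded}.
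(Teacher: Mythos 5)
Your proposal is correct and follows essentially the same route as the paper: parts (1) and (2) use exactly the same sets $U$, the same closure computation via \Cref{ded} and Theorem 1.3(2), and the same compactness-plus-isolated-points contradiction (you phrase it as producing a limit point, the paper as deriving an infinite discrete closed set; these are the same argument); part (3) is the paper's argument in contrapositive form, with the small improvement that you spell out the Azumaya-type decomposition of $M\otimes_C\Frac(B)$ and commute $G$ through it rather than asserting the iff characterisation of $\mathcal{X}_F$ directly.
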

\begin{proof} (1) Let $f\colon \Mod B\to \Mod A, X\mapsto M\otimes_C X$. Because $M$ is free of rank $n$ over $C$, it follows that $f$ commutes with filtered colimits and products. Let $U = \{B/m \mid m \subseteq B \text{ maximal ideal}\}$ and $V$ the set of all indecomposable direct summands of modules in $f(U)$ up to isomorphism. By \Cref{ass} the set $V$ is infinite. By Theorem 1.3 the closure $\overline{V}$ consists of all indecomposable direct summands of modules in $f(\overline{U})$. By \Cref{ded} the closure $\overline{U}$ equals $U \cup \{\Frac(B)\}$. Thus, if $M\otimes_C \Frac(B)$ had no generic direct summand, then $\overline{V}$ would consist of finite dimensional modules only. However, in this case $\overline{V}$ is an infinite discrete closed set in the compact space $\Ind A$, see Section 1.5 (1) and \Cref{disc}, which is a contradiction. It follows that $M\otimes_C \Frac(B)$ has a generic direct summand.

(2) For a maximal ideal $m \subseteq B$ let $U = \{B/m^i \mid i> 0\}$ and $V$ the set of indecomposable direct summands of modules in $f(U) = T_m$. By Theorem 1.3 the closure $\overline{V}$ consists of all indecomposable direct summands of modules in $f(\overline{U})$. Since $\Frac(B) \in \overline{U}$, see \Cref{ded}, it follows that $\overline{V}$ contains every indecomposable direct summand of $M\otimes_C \Frac(B)$. In particular, by (1) the set $\overline{V}$ contains a generic module. If $V$ is a finite set, then $\overline{V} = V$ by \Cref{disc}, which is a contradiction. Thus, $|V| = \infty$.

(3) The set $\Ind A \setminus \{Y\}$ is closed in $\Ind A$ by \Cref{disc}. Since $\{Y\}$ is compact, there is $F\in C(A)$ with $\Ind A \setminus \{Y\} = \mathcal{X}_F \cap \Ind A$ by Proposition 1.2. Consider again the functor $f\colon \Mod B\to \Mod A, X\mapsto M \otimes_C X$, which commutes with filtered colimits and products. Then $\mathcal{Y} = \mathcal{X}_{F\circ f}$ is a constructible subcategory of $\Mod B$, which consists of all $X \in \Mod B$ with $f(X) \in \mathcal{X}_F$. Now $f(X) \in \mathcal{X}_F$ if and only if $M\otimes_C X$ does not have a direct summand isomorphic to $Y$. Thus, if $Y$ is not a direct summand of $M\otimes_C \Frac(B)$, then $\Frac(B) \in \mathcal{Y}$ and so $B/m^i \in \mathcal{Y}$ for all $i>0$ and almost all maximal ideal $m\subseteq B$ by \Cref{cased}. However, by the assumptions $B/m_j^{i_j} \notin \mathcal{Y}$ for all $j\in J$ and some $i_j >0$. This is a contradiction and it follows that $Y$ is a direct summand of $M \otimes_C \Frac(B)$.
\end{proof}

By the above lemma we know that for every maximal ideal $m$ there are infinitely many non-isomorphic indecomposable modules appearing as direct summands of modules in the tube $T_m$. We show that there is no bound on the dimension of these indecomposable modules.

\begin{prop}\label{grow} Let $m \subseteq B$ be a maximal ideal and $V$ the set of indecomposable direct summands of modules in $T_m$. Then
\begin{align*}
    \sup\,\{ \dim X \mid X \in V\} = \infty.
\end{align*}
\end{prop}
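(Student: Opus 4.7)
The plan is a proof by contradiction. Suppose $d := \sup\{\dim X : X \in V\} < \infty$, so every indecomposable direct summand of every $X_{m,i}$ has $k$-dimension at most $d$.

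The first step is to exploit the $B$-module structure on the tube. Because $N := M \otimes_C B$ is free of rank $n$ over the Dedekind domain $B$, each $X_{m,i} = N \otimes_B B/m^i$ is a free $B/m^i$-module of rank $n$. In particular, the action of $B/m^i$ by $A$-endomorphisms is faithful, giving an embedding $B/m^i \hookrightarrow \End_A(X_{m,i})$ as a commutative local subring. Letting $\pi$ be a uniformizer of the DVR $B_m$, the short exact sequence
$$0 \longrightarrow X_{m,i-1} \longrightarrow X_{m,i} \longrightarrow X_{m,1} \longrightarrow 0$$
(the instance of the short exact sequence from the hypothesis with $j=i$, $j-i=1$) realizes the inclusion as multiplication by $\pi$, so $\pi X_{m,i} = X_{m,i-1}$; iterating gives $\pi^{i-1} X_{m,i} \cong X_{m,1} \neq 0$, while $\pi^i X_{m,i} = 0$. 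Thus $\pi$ acts on $X_{m,i}$ as a nilpotent $A$-endomorphism of nilpotency index exactly $i$.

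Next, decompose $X_{m,i} = \bigoplus_{j} Y_j^{(n_j)}$ with the $Y_j \in V$ pairwise non-isomorphic indecomposables of $k$-dimension at most $d$. Writing $\pi$ as a block matrix $(\pi_{jl})_{j,l}$ with $\pi_{jl} \colon Y_j^{(n_j)} \to Y_l^{(n_l)}$ and expanding the non-vanishing of $\pi^{i-1}$, the pigeonhole principle yields a chain
$$Y_{j_0} \longrightarrow Y_{j_1} \longrightarrow \cdots \longrightarrow Y_{j_{i-1}}$$
of $i-1$ non-zero morphisms between indecomposable modules of $k$-dimension at most $d$ whose composition (after projecting to individual copies of the summands) is non-zero.

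The final step is the main obstacle. By the Harada--Sai lemma, any composition of $2^d$ non-isomorphisms between indecomposables of length at most $d$ vanishes; hence for $i-1 \geq 2^d$ the chain above must contain at least one isomorphism $Y_{j_l} \xrightarrow{\sim} Y_{j_{l+1}}$. Combining such isomorphisms with the faithful action of the full commutative local ring $B/m^i$ on $X_{m,i}$ (not merely of the element $\pi$), the plan is to construct an idempotent in $\End_A(X_{m,i})$ on whose image $\pi$ acts as an isomorphism, contradicting the global nilpotency of $\pi$. The delicate part is that $\pi$ need not respect the Krull--Schmidt decomposition of $X_{m,i}$, so the argument must carefully combine the combinatorial Harada--Sai information with the compatibility between $B/m^i \hookrightarrow \End_A(X_{m,i})$ and the block idempotents. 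As a fallback, one may instead use that the generic direct summand of $M \otimes_C \Frac(B)$ (provided by \Cref{many}(1)) lies in $\overline{V}$, and derive a contradiction from the fact that such a Ziegler limit of finite-dimensional modules of uniformly bounded $k$-dimension is incompatible with the dimensions $\dim_k X_{m,i} = n\,i\,[B/m:k]$ growing linearly along the adic tower controlled by the short exact sequences.
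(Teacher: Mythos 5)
Your first approach stalls at exactly the point you flag, and the problem is not merely delicacy --- the obstruction is essential. Having a single entry $\pi_{j_l j_{l+1}}\colon Y_{j_l}\to Y_{j_{l+1}}$ that is an isomorphism in no way contradicts the global nilpotency of $\pi$. The standard example is $\pi=\left(\begin{smallmatrix}0&1\\0&0\end{smallmatrix}\right)$ acting on $Y\oplus Y$: the $(1,2)$-block is the identity, yet $\pi^2=0$. More generally, a nilpotent endomorphism of high index is perfectly compatible with the Krull--Schmidt summands having uniformly bounded dimension, so one cannot extract a contradiction purely from $\pi^{i-1}\neq 0$ plus Harada--Sai applied to the block chain. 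There is no way to ``construct an idempotent on whose image $\pi$ acts as an isomorphism'' from this data. Your fallback also does not close: that $\overline{V}$ contains a generic module is always true (it is precisely \Cref{many}(1)/(2) and does not use the boundedness hypothesis), and a generic module may very well have small endolength $\leq d$, so there is no tension with $\sup\{\dim X: X\in V\}\leq d$. The observation that $\dim_k X_{m,i}$ grows linearly while each summand has dimension $\leq d$ only forces the number of summands to grow, which by itself is not contradictory.

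The paper's actual route is different and exploits the directed system rather than a single nilpotent endomorphism. It builds the chain of monomorphisms $X_1\to X_2\to\cdots$ (with $\coker g_i\cong X_i$), introduces the notion of an indecomposable summand of some $X_i$ that \emph{completely splits} along the whole tower, and uses pure-injectivity of finite-dimensional indecomposables to peel such summands off of the entire system (not just of one term). After finitely many steps only ``radical directions'' remain, and it is to those compositions of radical morphisms that the Harada--Sai lemma is applied, giving $\gamma_{s-1}\circ\cdots\circ\gamma_1=0$ for $s\gg 0$ under the bounded-dimension hypothesis. Then a direct computation of $\coker g_l$ identifies the summands of $X_l$ and $X_{l+1}$ up to a fixed finite discrepancy, so only finitely many isomorphism classes of indecomposables occur in $V$, contradicting \Cref{many}(2). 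The crucial ingredient your argument lacks is this splitting/peeling mechanism across the whole filtered colimit, which is what allows Harada--Sai to be applied to genuinely radical morphisms rather than to arbitrary block entries of $\pi$.
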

\begin{proof} Since $B$ is a Dedekind domain, there is a sequence of monomorphisms
\begin{align*}
    B/m \longrightarrow B/m^2 \longrightarrow B/m^3 \longrightarrow \dots
\end{align*}
such that $\coker f_i \cong B/m^{i}$, where $f_i \colon B/m \longrightarrow B/m^{i+1}$ is the composition of the first $i$ morphisms in the sequence. Because $M$ is free over $C$, it follows that $M\otimes_C(-)$ is exact. Thus, there is a sequence of monomorphisms
\begin{align*}
    X_1 \longrightarrow X_2 \longrightarrow X_3 \longrightarrow \dots 
\end{align*}
in $T_m$ with $X_i = M \otimes_C B/m^i$ and $\coker g_i \cong X_i$ for $g_i = M \otimes_C f_i$. We say that an indecomposable direct summand $X$ of $X_i$ \emph{completely splits} if the composition
\begin{align*}
    X\longrightarrow X_i \longrightarrow X_{i+1} \longrightarrow \dots \longrightarrow X_j
\end{align*}
is a split monomorphism for all $j \geq i$. Otherwise, the composition is a radical morphism for some $j \geq i$. If $X$ completely splits, then the morphism $h\colon X \to \varinjlim X_j$ is a pure monomorphism by \cite[Corollary 2.1.3]{Prest}. Because $X$ is pure-injective, see \Cref{disc}, it follows that $h$ splits, which yields a decomposition
\begin{align*}
    X_1 \longrightarrow \dots \longrightarrow X_{i-1} \longrightarrow X\oplus \tilde{X}_{i} \xlongrightarrow{\begin{pmatrix}
        1_X & 0 \\
        0 & h_i 
    \end{pmatrix}}  X\oplus \tilde{X}_{i+1} \xlongrightarrow{\begin{pmatrix}
        1_X & 0 \\
        0 & h_{i+1} 
    \end{pmatrix}} \dots \tag{$\ast$} 
\end{align*}
with $X_j \cong X \oplus \tilde{X}_j$  and $h_i \colon \tilde{X}_j \to \tilde{X}_{j+1}$ for $j \geq i$.

We start by iteratively splitting off  indecomposable direct summands of $X_1$ as in $(\ast)$ if they completely split. If there is no such direct summand left, we continue with the indecomposable direct summands of $X_2$ (that did not split off yet), then $X_3$, then $X_4$, and so on, up to some $X_s$ with $s\geq 1$. We end up with a decomposition of the original sequence
\begin{align*}
    Y_1 \oplus Z_1 \xlongrightarrow{\begin{pmatrix}
        \alpha_1 & \beta_1 \\
        0 & \gamma_1
    \end{pmatrix}} &Y_2 \oplus Z_2 \xlongrightarrow{\begin{pmatrix}
        \alpha_2 & \beta_2 \\
        0 & \gamma_2
    \end{pmatrix}} \dots \xlongrightarrow{\begin{pmatrix}
        \alpha_{s-1} & \beta_{s-1} \\
        0 & \gamma_{s-1}
    \end{pmatrix}}  Y_s \oplus Z_s \\
    \xlongrightarrow{\begin{pmatrix}
        1_{Y_s} & 0 \\
        0 & \gamma_s
    \end{pmatrix}} &Y_s \oplus Z_{s+1} \xlongrightarrow{\begin{pmatrix}
        1_{Y_s} & 0 \\
        0 & \gamma_{s+1}
    \end{pmatrix}} Y_s \oplus Z_{s+2} \longrightarrow \dots,
\end{align*}
where $\alpha_i$ are split monomorphisms and for all $1 \leq i \leq s$ there is $j \geq i$ such that $\gamma_j \circ \dots \circ \gamma_i$ is a radical morphism. Thus, for $t \in \mathbb{N}$ we may choose $s$ big enough such that $\gamma_{s-1} \circ \dots \circ \gamma_1 $ is a composition of $t$-many radical morphisms. It follows that if $\sup \, \{\dim X \mid X \in V\} < \infty$, then $\gamma_{s-1} \circ \dots \circ \gamma_1 = 0$ for $s$ big enough by the Harada-Sai lemma \cite[Lemma 1.2]{HS}. We show that this yields a contradiction. 

For $l \geq s$, up to isomorphism, the map $g_l \colon X_1 \to X_{l+1}$ equals
\begin{align*}
    \begin{pmatrix}
        1_{Y_1} &0 \\
        0 & \gamma_{l}
    \end{pmatrix} \circ \dots \circ \begin{pmatrix}
        1_{Y_1} &0 \\
        0 & \gamma_{s} 
    \end{pmatrix} \circ \begin{pmatrix}
        \alpha_{s-1} & \beta_{s-1} \\
        0 & \gamma_{s-1} 
    \end{pmatrix} \circ \dots \circ \begin{pmatrix}
        \alpha_{1} & \beta_{1} \\
        0 & \gamma_{1} 
    \end{pmatrix} \\
    = \begin{pmatrix}
        \alpha_{s-1} \circ \dots \circ \alpha_1 & \delta \\
        0 & \gamma_{l} \circ \dots \circ \gamma_1 
    \end{pmatrix} = \begin{pmatrix}
        \alpha_{s-1} \circ \dots \circ \alpha_1 & \delta \\
        0 & 0
    \end{pmatrix} 
\end{align*}
for some $\delta \colon Z_1 \to Y_{s}$ that does not depend on $l$. Let $Y_s/X_1$ denote the cokernel of $(\alpha_{s-1}\circ \dots \circ \alpha_1\,\,\,\, \delta) \colon Y_1\oplus Z_1\to Y_s$. Then
\begin{align*}
    X_l \cong \coker g_l \cong  Y_s/X_1\oplus Z_{l+1} .
\end{align*}
Since $X_{l+1} \cong Y_s \oplus Z_{l+1}$, the indecomposable direct summands of $X_l$ and $X_{l+1}$ coincide, up to those of $Y_s$ and $Y_s/X_1$, for all $l \geq s$. It follows that there are only finitely many isomorphism classes of indecomposable modules that appear as direct summands of modules in $T_m$, which contradicts \Cref{many} (2).
\end{proof}

We have now shown everything that we want. The following summarizes the results.

\begin{thm}\label{main} Let $k$ be a field, $A$ a finitely generated $k$-algebra and $\mathcal{X} \subseteq \Mod A$ a constructible subcategory with infinitely many non-isomorphic modules of the same dimension in $\fin \mathcal{X}$. There are sets $t_i, i\in I$ of indecomposable modules in $\fin \mathcal{X}$ with the following properties.
\begin{itemize}
    \item[(1)] The cardinality of $I$ equals the cardinality of $k$.
    \item[(2)] For all $i \in I$ the dimension of modules in $t_i$ is unbounded.
    \item[(3)] There are finitely many indecomposable $A$-modules $Y_1, \dots, Y_s$ such that if an $A$-module $Y$ is isomorphic to a module in $t_i$ for infinitely many $i \in I$, then $Y \cong Y_j$ for some $1\leq j \leq s$.
\end{itemize}
\begin{proof}  Let $n \in \mathbb{N}$ such that there are infinitely many non-isomorphic modules of dimension $n$ in $\mathcal{X}$. This is precisely the assumption on $\mathcal{X}$ that we had throughout. Choose $M, C, B$ as before. By \Cref{contains} the tubes
\begin{align*}
    T_m = \{M \otimes_C B/m^i \mid i> 0\}
\end{align*}
are contained in $\mathcal{X}$ for all but finitely many maximal ideals $m \subseteq B$, which there are $|k|$-many of. Let $m_i, i\in I$ be the maximal ideals with $T_{m_i} \subseteq \mathcal{X}$ and let $t_i$ be the set of indecomposable direct summands of modules in $T_{m_i}$ for $i\in I$. Then for all $i\in I$ the dimension of modules in $t_i$ is unbounded by \Cref{grow}. It is left to show the claim (3).

Let $Y_1, \dots, Y_s$ be a complete list of finite dimensional indecomposable direct summands of $M \otimes_C \Frac(B)$ up to isomorphism, see \Cref{gen}. If an $A$-module $Y$ is isomorphism to a module in $t_i$ for infinitely many $i \in I$, then $Y\cong Y_j$ for some $1\leq j \leq s$ by \Cref{many} (3).
\end{proof}
\end{thm}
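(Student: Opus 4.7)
My plan is to assemble the three claims from results already proven in this section. With the standing hypothesis, pick $n$ such that $\fin\mathcal{X}$ contains infinitely many non-isomorphic modules of dimension $n$ (which also forces $k$ to be infinite, since $A$ is finitely generated), and keep the notation for the $A$-$C$-bimodule $M$, the coordinate ring $C$ of $\vMod(A,n)$, the prime $p\in\vMod(\mathcal{X},n)$ and the Dedekind localisation $B$ of $C/p$ constructed in \Cref{ass} and the paragraph following it.

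For the indexing set $I$ I would take the set of maximal ideals $m\subseteq B$ for which the tube $T_m=\{M\otimes_C B/m^j\mid j>0\}$ lies in $\mathcal{X}$, and for each $i\in I$ set $t_i$ to be the set of isomorphism classes of indecomposable direct summands of modules in $T_{m_i}$. By \Cref{contains} only finitely many maximal ideals of $B$ are excluded from $I$; a Noether normalisation $k[t]\hookrightarrow B$ yields a finite surjection $\mathrm{Max}(B)\to\mathrm{Max}(k[t])$, and since $k$ is infinite there are $|k|$ maximal ideals in $k[t]$ (already the linear monic polynomials contribute $|k|$ of them), so $|I|=|\mathrm{Max}(B)|=|k|$, establishing~(1). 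Claim~(2) is then immediate from \Cref{grow} applied at each $m_i$.

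For~(3) I would take $Y_1,\dots,Y_s$ to be a complete set of representatives for the finite dimensional indecomposable direct summands of the $A$-module $M\otimes_C\Frac(B)$; by \Cref{gen} this module has finite endolength and hence admits only finitely many isomorphism classes of indecomposable direct summands. If a finite dimensional indecomposable module $Y$ is isomorphic to a module in $t_i$ for infinitely many $i\in I$, then by definition $Y$ is a direct summand of some $X_i\in T_{m_i}$ for these infinitely many indices, so \Cref{many}(3) forces $Y$ to be a direct summand of $M\otimes_C\Frac(B)$, and finite dimensionality identifies it with one of the $Y_j$.

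The real technical content is in \Cref{ass}, \Cref{contains}, \Cref{many} and \Cref{grow}; the present step is pure bookkeeping, and the only item needing a line of justification beyond those references is the equality $|\mathrm{Max}(B)|=|k|$, which is routine via Noether normalisation and is not the main obstacle.
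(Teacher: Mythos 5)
Your proof is correct and essentially coincides with the paper's proof: same choice of $n$, same $M,C,B$, same index set $I$ and sets $t_i$, and the three claims are deduced from \Cref{contains}, \Cref{grow}, \Cref{gen} and \Cref{many}(3) exactly as in the text. The only difference is that you spell out the cardinality count $|\mathrm{Max}(B)|=|k|$ via Noether normalisation, which the paper leaves implicit, and this filling-in is correct: the finite dominant map $\Spec(B)\to\Spec(k[t])$ is surjective on closed points with finite fibers, giving $|\mathrm{Max}(B)|\ge|k|$, while $|\mathrm{Max}(B)|\le|B|\le|k|$ since $B$ is a finitely generated algebra over the infinite field $k$.
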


\begin{coro}\label{indstep} Let $k$ be an uncountable field, $A$ a finitely generated $k$-algebra and $\mathcal{X} \subseteq \Mod A$ a constructible subcategory such that there are infinitely many non-isomorphic modules of the same dimension in $\fin \mathcal{X}$. Then $\mathcal{X}$ is of strongly unbounded type.
\end{coro}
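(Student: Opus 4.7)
The strategy is to apply \Cref{main} to $\mathcal{X}$ and exploit the fact that $|k|$ is uncountable by a cardinality argument. The hypothesis of \Cref{main} matches the one at hand, so it produces sets $t_i$ of indecomposable modules in $\fin \mathcal{X}$ indexed by $I$ with $|I|=|k|$, together with finitely many exceptional indecomposables $Y_1, \dots, Y_s$. I set $D = \max\{\dim Y_j : 1 \leq j \leq s\}$ and fix any integer $N > D$. By property (2) of \Cref{main}, for every $i \in I$ I can choose an indecomposable module $X_i \in t_i$ with $\dim X_i > N$.

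The key claim is that the family $\{X_i\}_{i \in I}$ realises uncountably many isomorphism classes of indecomposables. Indeed, if an isomorphism class $Y$ occurs as $X_i$ for all $i$ in some subset $J \subseteq I$, then $Y \in t_i$ for every $i \in J$; were $J$ infinite, property (3) of \Cref{main} would force $Y \cong Y_j$ for some $j$, contradicting $\dim Y > N > D \geq \dim Y_j$. Hence each isomorphism class is hit by only finitely many indices, and since $|I|$ is uncountable, the cardinality inequality $|I| \leq \aleph_0 \cdot \kappa$ forces the number $\kappa$ of isomorphism classes among the $X_i$ to be uncountable.

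This produces uncountably many non-isomorphic indecomposable modules of dimension greater than $N$ in $\fin \mathcal{X}$. Since there are only countably many integers exceeding $N$, a pigeonhole argument yields some $n > N$ carrying uncountably many (in particular, infinitely many) non-isomorphic indecomposables in $\fin \mathcal{X}$. Iterating this for $N = D,\, n_1,\, n_2,\, \dots$ produces a strictly increasing sequence $n_1 < n_2 < \dots$ such that for every $i \geq 1$ there are infinitely many non-isomorphic indecomposable modules of dimension $n_i$ in $\fin \mathcal{X}$, which is precisely strongly unbounded type. The only subtlety worth flagging is the threshold $N > D$: this ensures that property (3) of \Cref{main} yields an actual contradiction rather than a harmless identification with an exceptional module $Y_j$.
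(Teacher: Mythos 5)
Your proof is correct and uses essentially the same approach as the paper: both invoke \Cref{main}, choose a dimension threshold exceeding $\max_j \dim Y_j$ so that property~(3) forces distinct isomorphism classes, and combine the uncountability of $k$ with a pigeonhole argument on the countably many dimensions. The only difference is the order in which you run the two pigeonhole-type steps (you first bound multiplicities via~(3) and then pigeonhole on dimensions, whereas the paper pigeonholes on $I$ first and then applies~(3)); the underlying cardinality argument is identical.
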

\begin{proof} Let $t_i, i \in I$ and $Y_1, \dots, Y_s$ be as in \Cref{main}. By (1) and (2) in \Cref{main} as well as the pigeonhole principle, there is a sequence of natural numbers $n_1 < n_2 < \dots$ such that for every $j>0$ there are $|k|$-many $i\in I$ that admit an indecomposable module $X\in t_i$ with $\dim X = n_j$. We may choose $n_1$ with $n_1 > \dim Y_t$ for all $1\leq t \leq s$. Then for all $j > 0$ the indecomposable modules $X \in t_i$ with $\dim X = n_j$ yield $|k|$-many isomorphism classes by \Cref{main} (3). It follows that $\mathcal{X}$ is of strongly unbounded type.
\end{proof}

It would be nice to get rid of the restriction on the cardinality of $k$ in the above result. We discuss a possible approach.

\begin{rem}\rm To extend \Cref{indstep} to countable fields we would have to control the dimension of the modules in $t_i, i\in I$ in the setting of \Cref{main}. Recall from the proof of \Cref{main} that $t_i$ equals the set of indecomposable direct summands of $M \otimes_C B/m_i^j$ with $j>0$, where $m_i \subseteq B, i\in I$ are almost all maximal ideals. Let $d_{i,j}$ be the maximum of the dimensions of the indecomposable direct summands of $M \otimes_C B/m_i^j$. If there are constants $c_1, c_2 > 0$ such that $c_1\cdot  j \leq d_{i,j} \leq c_2 \cdot j$
for infinitely many $i\in {I}$ and $j>0$, then it is possible to replicate the proof of \Cref{indstep} since the pigeonhole principle now applies as for uncountable fields. 

By \Cref{ass} there is $l > 0$ with $\dim B/m \leq l$ for infinitely many maximal ideals $m \subseteq B$. Thus, we may choose $c_2 = l \cdot n$, where $n$ is the rank of $M$ over $C$. It is left to show the existence of $c_1$. A potential method would be to control the decomposition of the sequence 
\begin{align*}
    0 \longrightarrow M \otimes_C B/m \longrightarrow M \otimes_C B/m^2 \longrightarrow \dots 
\end{align*}
in the proof of \Cref{grow} independent of the maximal ideal $m$ (at least for a suitable choice of infinitely many maximal ideals). Then, an effective use of the Harada-Sai lemma would yield $c_1$ as desired.
\end{rem}

Independent of the underlying field, we succeeded in proving a variant of the first Brauer-Thrall conjecture for constructible subcategories.

\begin{thm}\label{mthm} Let $k$ be a field, $A$ a finitely generated $k$-algebra and $\mathcal{X} \subseteq \Mod A$ a constructible subcategory. If $\mathcal{X}$ is of infinite type, then $\mathcal{X}$ is of unbounded type.
\end{thm}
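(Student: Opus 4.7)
The entire heavy machinery has already been developed in the preceding sections, so \Cref{mthm} should follow by a short contrapositive argument combining \Cref{main} with an elementary pigeonhole step. My plan is to assume, for contradiction, that $\mathcal{X}$ is of infinite type but \emph{not} of unbounded type, and then derive a contradiction directly from \Cref{main}.

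Suppose there exists an integer $N > 0$ such that every indecomposable module in $\fin \mathcal{X}$ has dimension at most $N$. Since $\mathcal{X}$ is of infinite type, there are infinitely many pairwise non-isomorphic indecomposables in $\fin \mathcal{X}$, and the dimension function takes them into the finite set $\{1,2,\ldots,N\}$. By the pigeonhole principle there exists an integer $n \leq N$ such that infinitely many non-isomorphic modules of dimension $n$ lie in $\fin \mathcal{X}$. This is precisely the standing hypothesis of \Cref{main}.

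Applying \Cref{main}, we obtain sets $t_i$, $i \in I$, of indecomposable modules in $\fin \mathcal{X}$ such that the dimensions of the modules in each $t_i$ are unbounded (property (2) of \Cref{main}). Fixing any single $i_0 \in I$ (note $I$ is non-empty since $|I| = |k| \geq 1$), we find indecomposable modules $X \in t_{i_0} \subseteq \fin \mathcal{X}$ of arbitrarily large dimension, contradicting the bound $N$. Therefore no such bound can exist, and $\mathcal{X}$ is of unbounded type.

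The argument is essentially a one-line deduction from \Cref{main}, so there is no real obstacle at this stage — all the difficulty has already been absorbed into the geometric construction via the scheme of modules, \Cref{ass}, the lifting through the Dedekind localisation $B$, \Cref{contains}, and the Harada--Sai argument in \Cref{grow}. The only minor care required is to note that \Cref{main}'s hypothesis (infinitely many non-isomorphic modules of a single fixed dimension) is genuinely weaker than ``unbounded type'' but is forced by combining ``infinite type'' with the negation of ``unbounded type'' via pigeonhole, which is exactly the reduction performed above.
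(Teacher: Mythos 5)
Your argument is exactly the paper's proof: the contrapositive plus pigeonhole reduces to the standing hypothesis of \Cref{main}, and then part (2) of \Cref{main} gives the contradiction. The proposal is correct and just spells out the pigeonhole step in slightly more detail than the paper does.
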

\begin{proof} Suppose that $\mathcal{X}$ is of infinite but not of unbounded type. Then there are infinitely many non-isomorphic modules of the same dimension in $\fin \mathcal{X}$. It follows that $\mathcal{X}$ is of unbounded type by \Cref{main} (2), which is a contradiction. 
\end{proof}

\section{Brauer Thrall II}

We discuss to what extend the classical second Brauer-Thrall conjecture can be generalized to constructible subcategories. It states that infinite type implies strongly unbounded type for the module category of a finite dimensional algebra over an infinite field. For algebraically closed fields the conjecture is proven \cite{Bautista, Bongartz}. This also implies the case of perfect infinite fields; the general case remains open. I am thankful to Dave Benson for providing the following example, which already shows an obstacle for a possible generalization.

\begin{exa}\label{counter}\rm Consider the universal enveloping algebra of $\text{sl}_2(\mathbb{C})$
\begin{align*}
    A = \mathbb{C}\langle e,f,h\rangle/(he-eh = 2e, hf -fh = -2f, ef-fe = h).
\end{align*}
For every $n > 0$ there is, up to isomorphism, a unique indecomposable $A$-module of dimension $n$. Thus, $\Mod A$ is of infinite but not of strongly unbounded type. By \Cref{embed} it follows that there is a finite dimensional $\mathbb{C}$-algebra $\tilde{A}$ and a constructible subcategory $\mathcal{X}\simeq \Mod A$ of $\Mod \tilde{A}$ such that $\mathcal{X}$ if of infinite type but not of strongly unbounded type.  
\end{exa}

The above example shows that, in general, infinite type does not imply strongly unbounded type for constructible subcategories of the module category of a finite dimensional algebra over an infinite field. Thus, we must impose some assumptions on the algebra.

Let $k$ be a field and $A$ a finite dimensional $k$-algebra. For the definition of generic modules, recall \Cref{gen}. The algebra $A$ is \emph{tame} if for all $n\in \mathbb{N}$ there are only finitely many isomorphism classes of generic $A$-modules $X$ with $\lend(X) = n$. Moreover, $A$ is \emph{domestic} if there are only finitely many isomorphism classes of generic $A$-modules in total. These definitions coincide with the classical ones for algebraically closed fields by \cite[Theorem 4,4, Corollary 5.7]{Crawley-Boevey0}. 

\begin{rem}\label{big}\rm The following  argument follows Herzog \cite[Theorem 9.6]{Herzog}; the case $\mathcal{X} = \Mod A$ is originally due to Crawley-Boevey \cite[Theorem 9.6]{Crawley-Boevey}. Let $k$ be a field, $A$ a finite dimensional $k$-algebra and $\mathcal{X} \subseteq \Mod A$ a definable subcategory. If there are infinitely many non-isomorphic indecomposable $X \in \mathcal{X}$ with $\lend (X) \leq n$, then there is a generic $A$-module in $\mathcal{X}$. Indeed, if $\mathcal{X}$ had no generic module, then 
\begin{align*}
\{X \in \mathcal{X} \mid X \text{ is indecomposable and } \lend(X) \leq n\}    
\end{align*}
would be an infinite discrete closed set in the compact space $\Ind A$ by (1)-(4) in Section 1.5, which is a contradiction. Note that the statement also holds for a finitely generated $k$-algebra using the same argument together with \Cref{disc}.
\end{rem}

\begin{prop}\label{tame} Let $k$ be an algebraically closed field and $A$ a finite dimensional tame $k$-algebra. A constructible subcategory $\mathcal{X} \subseteq \Mod A$ is of strongly unbounded type if an only if there is a generic $A$-module in $\mathcal{X}$. In that case there exists an $A$-$B$-bimodule $M$ free over $B$ of rank $\lend(X)$ with $B = k[x,f^{-1}]$ such that
\begin{align*}
        \{M \otimes_B k[x]/(x-\lambda)^i \mid i >0\} \subseteq \mathcal{X}
\end{align*}
for all but finitely many $\lambda \in k$. Further, $M \otimes_B k[x]/(x-\lambda)^i$ are non-isomorphic indecomposable $A$-modules of dimension $i \cdot \lend(X)$.
\end{prop}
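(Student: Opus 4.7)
The plan is to establish both implications by combining the dichotomy for constructible subcategories of $\Mod B$ over a Dedekind domain (\Cref{cased}) with Crawley-Boevey's structure theorem for generic modules over tame algebras, reusing the lifting technique from the proof of \Cref{contains}.

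For the ``only if'' direction, suppose $\mathcal{X}$ is of strongly unbounded type, witnessed by a sequence $n_1 < n_2 < \dots$. Fix any $n = n_i$. Then $\fin\mathcal{X}$ contains infinitely many non-isomorphic indecomposable modules of dimension $n$, and each of them has endolength at most $n$. Since $\mathcal{X}$ is in particular a definable subcategory, \Cref{big} applies and produces a generic $A$-module in $\mathcal{X}$.

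For the ``if'' direction, let $X \in \mathcal{X}$ be generic. Since $k$ is algebraically closed and $A$ is tame, Crawley-Boevey's structure theorem for generic modules (see \cite[Section 5]{Crawley-Boevey0}) provides an $A$-$B$-bimodule $M$, free over $B = k[x,f^{-1}]$ of rank $\lend(X)$ for some $0 \neq f \in k[x]$, such that $X \cong M \otimes_B \Frac(B)$ and such that the $A$-modules $M \otimes_B k[x]/(x-\lambda)^i$ are indecomposable of dimension $i \cdot \lend(X)$ for every $i>0$ and every $\lambda$ with $f(\lambda) \neq 0$. Now repeat the argument in the proof of \Cref{contains}: the functor $f_M = M \otimes_B (-) \colon \Mod B \to \Mod A$ commutes with filtered colimits and products because $M$ is free of finite rank over $B$. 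Writing $\mathcal{X} = \mathcal{X}_F$ for some $F \in C(A)$, the preimage $\mathcal{Y} = \mathcal{X}_{F\circ f_M}$ is a constructible subcategory of $\Mod B$ containing $\Frac(B)$. Since $B$ is a Dedekind domain, \Cref{cased} yields $B/m^i \in \mathcal{Y}$ for all $i>0$ and all but finitely many maximal ideals $m \subseteq B$. Under the bijection between maximal ideals of $B$ and elements $\lambda \in k$ with $f(\lambda) \neq 0$, this gives $M \otimes_B k[x]/(x-\lambda)^i \in \mathcal{X}$ for all $i>0$ and all but finitely many $\lambda \in k$, which is the second claim of the proposition.

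Strongly unbounded type then follows at once: for each $i>0$ the infinitely many admissible values of $\lambda$ produce infinitely many non-isomorphic indecomposable $A$-modules of dimension $n_i := i \cdot \lend(X)$ inside $\fin\mathcal{X}$, and the dimensions $n_1 < n_2 < \dots$ form the required strictly increasing sequence. The only non-routine ingredient is the invocation of Crawley-Boevey's classification of generic modules for tame algebras, which is precisely what supplies the explicit one-parameter family $B = k[x,f^{-1}]$; once that is available, the Ziegler-theoretic machinery developed in \Cref{constr,constru,cased,contains} transports the generic module $X$ to a full tube of modules in $\mathcal{X}$ for each admissible $\lambda$.
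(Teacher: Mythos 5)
Your proof is correct and follows essentially the same route as the paper: the ``only if'' direction invokes \Cref{big} via the inequality $\lend(Y)\leq\dim Y$, and the ``if'' direction applies Crawley-Boevey's structure theorem for generic modules over tame algebras to produce the bimodule $M$ over $B=k[x,f^{-1}]$, then transports $\mathcal{X}$ to a constructible subcategory $\mathcal{Y}\subseteq\Mod B$ containing $\Frac(B)$ and concludes with the Dedekind-domain dichotomy of \Cref{cased}. The only cosmetic difference is that you phrase Crawley-Boevey's output directly in terms of the indecomposability of $M\otimes_B k[x]/(x-\lambda)^i$, whereas the paper states that the functor $M\otimes_B(-)$ preserves indecomposables; these are equivalent.
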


\begin{proof} The inequality $\lend (Y) \leq \dim Y$ holds for $Y\in \Mod A$. Hence, by \Cref{big} if $\mathcal{X}$ is of strongly unbounded type, then $\mathcal{X}$ has a generic $A$-module.

Assume that there is a generic $A$-module $X$ in $\mathcal{X}$. Because $A$ is tame, there is an $A$-$B$-bimodule $M$ free over $B$ of rank $\lend(X)$ with $B= k[x,f^{-1}]$ such that the functor $g\colon \Mod B \rightarrow \Mod A, Y \mapsto M \otimes_B Y$ preserves indecomposable modules and $M \otimes_B k(x) \cong X$ \cite[p. 242]{Crawley-Boevey0}. Let $\mathcal{X} = \mathcal{X}_F$ for $F\in C(A)$. Since $M$ is free over $B$ of finite rank, it follows that $g$ commutes with filtered colimits and products. Thus, $\mathcal{Y} = \mathcal{X}_{F\circ g}$ is a constructible subcategory of $\Mod B$, which consists of all $Y\in \Mod B$ with $g(Y) = M \otimes_B Y \in \mathcal{X}$. In particular, $M \otimes_B k(x) \cong X \in \mathcal
X$ implies $k(x) \in \mathcal{Y}$. Applying \Cref{cased} to $\mathcal{Y}$ for the Dedekind domain $B$, it follows that
\begin{align*}
    \{M\otimes_B k[x]/(x-\lambda)^i \mid i >0 \} \subseteq \mathcal{X}
\end{align*}
for all but finitely many $\lambda \in k$. Because $M\otimes_B k[x]/(x-\lambda)^i$ are indecomposable non-isomorphic $A$-modules of dimension $i \cdot \lend(X)$, the category $\mathcal{X}$ is of strongly unbounded type.
\end{proof}

Next, we guarantee the existence of a generic module under suitable assumptions. This requires the following definition. For a ring $R$ let $\mathcal{S}_{-1} = 0$ be the trivial Serre subcategory of $C(R)$. If $\alpha$ is an ordinal of the form $\alpha = \beta +1$, let $\mathcal{S}_\alpha$ be the Serre subcategory of all objects in $C(R)$ that become of finite length in the Serre quotient $C(R) /\mathcal{S}_{\beta}$. If $\lambda$ is a non-zero limit ordinal, then let $\mathcal{S}_{\lambda} = \bigcup_{\alpha < \lambda} \mathcal{S}_{\alpha}$. The \emph{Krull-Gabriel dimension} of $R$ equals the smallest ordinal $\alpha$ with $\mathcal{S}_{\alpha} = C(R)$ \cite{Geigle}. If no such $\alpha$ exists, then the Krull-Gabriel dimension of $R$ is not defined. The following result also holds for a finitely generated algebra with the same proof making use of \Cref{disc}.

\begin{lem}\label{geaan} Let $k$ be a field and $A$ a finite dimensional $k$-algebra such that the Krull-Gabriel dimension of $A$ is defined. For every infinite closed set $\mathcal{U} \subseteq \Ind A$ there is a generic $A$-module $X\in \mathcal{U}$.
\end{lem}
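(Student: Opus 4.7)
The plan is to separate off the finite-dimensional modules in $\mathcal{U}$ and then extract a finite-endolength module from the remaining closed set, which will necessarily be infinite-dimensional and hence generic.

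Let $\mathcal{U}_{\mathrm{fd}} := \{X \in \mathcal{U} \mid \dim_k X < \infty\}$. By Section 1.5 (4) each $\{X\}$ with $X$ finite-dimensional is open in $\Ind A$, so $\mathcal{U}_{\mathrm{fd}}$ is open in $\Ind A$ as a union of open singletons. Hence $\mathcal{U}' := \mathcal{U} \setminus \mathcal{U}_{\mathrm{fd}}$ is closed. I first observe that $\mathcal{U}'$ is non-empty: otherwise $\mathcal{U} = \mathcal{U}_{\mathrm{fd}}$ would consist of singletons that are both open and closed (by Section 1.5 (2), since $\lend(X) \leq \dim_k X$), so $\mathcal{U}$ would be an infinite discrete closed subset of the compact space $\Ind A$ (Section 1.5 (1)), which is impossible.

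It then suffices to produce a module $X \in \mathcal{U}'$ of finite endolength, since such an $X$ is infinite-dimensional by construction and hence generic. For this I would invoke the following scattering consequence of the KG hypothesis: when the Krull--Gabriel dimension of $A$ is defined, every non-empty closed subset $\mathcal{V} \subseteq \Ind A$ contains a point of finite endolength. Applied to $\mathcal{V} = \mathcal{U}'$, this yields the generic module.

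The main obstacle is verifying this scattering consequence, which uses the Krull--Gabriel filtration $\mathcal{S}_{-1} \subseteq \mathcal{S}_0 \subseteq \mathcal{S}_1 \subseteq \cdots$ of $C(A)$ and the correspondence between Serre subcategories of $C(A)$ and closed subsets of $\Ind A$ from Section 1.2. Let $\mathcal{S}_\mathcal{V} \subseteq C(A)$ be the Serre subcategory corresponding to $\mathcal{V}$; since $\bigcup_\alpha \mathcal{S}_\alpha = C(A)$ by the KG hypothesis, while $\mathcal{S}_\mathcal{V} \neq C(A)$ (the forgetful functor lies in $C(A)$ and does not vanish on any non-empty $\mathcal{V}$), there is a least ordinal $\beta$ with $\mathcal{S}_\beta \not\subseteq \mathcal{S}_\mathcal{V}$, and $\beta = \gamma + 1$ is necessarily a successor (at a limit the filtration is a union, so the union of the $\mathcal{S}_\alpha$ with $\alpha < \beta$ would lie in $\mathcal{S}_\mathcal{V}$, a contradiction). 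Choosing $F \in \mathcal{S}_\beta \setminus \mathcal{S}_\mathcal{V}$ whose image in the Serre quotient $C(A)/\mathcal{S}_\gamma$ is simple yields some $X \in \mathcal{V}$ with $F(X) \neq 0$; the minimality of $F$ modulo $\mathcal{S}_\gamma$, combined with the vanishing of every functor in $\mathcal{S}_\gamma$ on $\mathcal{V}$, forces $F(X)$ to be a simple $\End_A(X)$-module. The standard correspondence between simple finitely presented functors and finite-endolength points of the Ziegler spectrum (cf.\ Ziegler, Krause) then yields $\lend(X) < \infty$, completing the argument.
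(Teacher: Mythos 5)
Your reduction to the nonempty closed set $\mathcal{U}'$ of infinite‑dimensional points and the goal of finding a finite‑endolength point there match the paper exactly, and the observation that the least $\beta$ with $\mathcal{S}_\beta\not\subseteq\mathcal{S}_{\mathcal{U}'}$ is a successor is correct. The gap is in the final step: from the existence of a functor $F\in\mathcal{S}_{\gamma+1}\setminus\mathcal{S}_{\mathcal{U}'}$ that is simple in $C(A)/\mathcal{S}_\gamma$ and satisfies $F(X)\neq 0$ for some $X\in\mathcal{U}'$, you conclude $\lend(X)<\infty$ via a ``standard correspondence between simple finitely presented functors and finite‑endolength points.'' That correspondence (essentially Crawley‑Boevey/Krause) concerns simple objects of $C(A)$ itself, not of an arbitrary Serre quotient $C(A)/\mathcal{S}_\gamma$. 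A simple object of $C(A)/\mathcal{S}_\gamma$ only isolates a point of $\Ind\mathcal{X}_{\mathcal{S}_\gamma}$, i.e.\ gives that $\{X\}$ is \emph{open} in the Ziegler spectrum of the definable subcategory $\mathcal{X}_{\mathcal{S}_\gamma}$ — it does not force $X$ to be endofinite, and ``$F(X)$ is a simple $\End_A(X)$‑module'' is in any case weaker than $\lend(X)<\infty$.

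Concretely, take $A$ the Kronecker algebra (KG dimension $2$) and $\mathcal{U}$ the Ziegler closure of a single tube; then $\mathcal{U}'$ consists of the Pr\"ufer module $P$, the adic module $\hat{R}$, and the generic module $G$ from that tube. Here $\gamma=0$, and the simple objects of $C(A)/\mathcal{S}_0$ not lying in $\mathcal{S}_{\mathcal{U}'}$ isolate $P$ or $\hat R$ inside $\Ind\mathcal{X}_{\mathcal{S}_0}$. Both $P$ and $\hat R$ have \emph{infinite} endolength, so your procedure can output a non‑generic module. What the paper does instead is pass to a \emph{minimal} nonempty closed subset $\mathcal{V}\subseteq\mathcal{U}'$ (equivalently a \emph{maximal} Serre subcategory $\mathcal{S}$); maximality forces $C(A)/\mathcal{S}$ to be a length category with a unique simple, and only then does \cite[Prop.\ 6.23]{Krause0} give $\mathcal{V}=\{X\}$ with $X$ endofinite, while the KG hypothesis (via \cite[Prop.\ 7.2, Lemma B.8]{Krause0}) rules out the alternative of a densely ordered chain of subobjects. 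To repair your argument you would need to make this minimality step explicit rather than working directly at level $\gamma$ of the Krull--Gabriel filtration.
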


\begin{proof} If $\mathcal{U}$ contained only finite dimensional modules, then it would be an infinite discrete compact space by Section 1.5 (1) and (4), which is a contradiction. Thus, $\mathcal{U} \setminus \mod A$ is non-empty and by compactness we may choose a minimal non-empty closed set $\mathcal{V} \subseteq \mathcal{U} \setminus \mod A$. Now $\mathcal{V}$ corresponds to a maximal Serre subcategory $\mathcal{S}$ of $C(A)$ by Theorem 1.2. By maximality, the Serre quotient $C(A)/\mathcal{S}$ either contains a unique simple object or every object admits a dense chain of subobjects. In the second case, by \cite[Proposition 7.2, Lemma B.8]{Krause0} the Krull-Gabriel dimension of $A$ is not defined. Thus, we are in the first case and $\mathcal{V} = \{X\}$ for an indecomposable $A$-module $X$ of finite endolength by \cite[Proposition 6.23]{Krause0}. By construction of $\mathcal{V}$ the module $X$ is not finite dimensional, hence generic, and $X\in \mathcal{U}$.
\end{proof}

Let $k$ be a field and $A$ a finite dimensional $k$-algebra. A conjecture of Prest states that the Krull-Gabriel dimension of $A$ is defined if and only if $A$ is domestic \cite[Conjecture 9.1.15]{Prest}. This holds for various algebras, see \cite[Section 1]{Pastuszak} for a detailed discussion. What is known in general is that if $k$ is algebraically closed and the Krull-Gabriel dimension of $A$ is defined, then the algebra is tame, see for example \cite[Proposition 8.15]{Krause0}.

\begin{coro}\label{algcase} Let $k$ be an algebraically closed field, $A$ a finite dimensional $k$-algebra whose Krull-Gabriel dimension is defined and $\mathcal{X} \subseteq \Mod A$ a constructible subcategory. If $\mathcal{X}$ is of infinite type, then $\mathcal{X}$ is of strongly unbounded type.
\end{coro}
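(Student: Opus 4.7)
The plan is to combine the infinite-type assumption with the Krull-Gabriel hypothesis to produce a generic module inside $\mathcal{X}$, and then feed that generic module into Proposition~\ref{tame} to manufacture the strongly unbounded family. The hypothesis that $k$ is algebraically closed is used twice: once to invoke tameness of $A$ (noted in the paragraph preceding the corollary, the defined Krull-Gabriel dimension over an algebraically closed field forces tameness), and once to guarantee that $k$ is infinite, so that "all but finitely many $\lambda \in k$" means infinitely many.

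First, set $\mathcal{U} = \mathcal{X} \cap \Ind A$. Since $\mathcal{X}$ is constructible, it is in particular definable, so $\mathcal{U}$ is a closed subset of $\Ind A$. Because $\mathcal{X}$ is of infinite type, $\fin \mathcal{X}$ contains infinitely many pairwise non-isomorphic indecomposables, and each such module is an open point of $\Ind A$ lying in $\mathcal{U}$ by Section~1.5(4). In particular, $\mathcal{U}$ is an infinite closed set, so Lemma~\ref{geaan} supplies a generic $A$-module $X \in \mathcal{U} \subseteq \mathcal{X}$.

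Second, since $A$ is tame, Proposition~\ref{tame} applied to this generic $X \in \mathcal{X}$ produces an $A$-$B$-bimodule $M$, free of rank $\lend(X)$ over $B = k[x,f^{-1}]$, such that
\[
  \{M \otimes_B k[x]/(x-\lambda)^i \mid i > 0\} \subseteq \mathcal{X}
\]
for all but finitely many $\lambda \in k$, with these modules being pairwise non-isomorphic indecomposables of dimension $i\cdot \lend(X)$.

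Finally, set $n_i = i \cdot \lend(X)$. Since $X$ is non-zero, $\lend(X) \geq 1$, hence $n_1 < n_2 < \cdots$. As $k$ is algebraically closed and therefore infinite, for each fixed $i > 0$ the family $\{M \otimes_B k[x]/(x-\lambda)^i \mid \lambda \in k, \text{ admissible}\}$ consists of infinitely many pairwise non-isomorphic indecomposable $A$-modules of dimension $n_i$ lying in $\fin \mathcal{X}$. This is precisely the definition of strongly unbounded type, completing the proof. There is no real obstacle here: the result is essentially an assembly of Lemma~\ref{geaan} (existence of a generic module) and Proposition~\ref{tame} (extraction of a tube-like family from a generic module), where the only thing to check is that the infinite-type hypothesis feeds directly into the input of the former.
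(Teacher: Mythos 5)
Your proof is correct and follows exactly the same route as the paper's: show $\mathcal{U} = \mathcal{X} \cap \Ind A$ is an infinite closed set, invoke \Cref{geaan} to obtain a generic module in $\mathcal{X}$, note that a defined Krull-Gabriel dimension over an algebraically closed field gives tameness, and then apply \Cref{tame}. You merely spell out the details (why $\mathcal{U}$ is infinite, and the final count of dimensions $n_i = i \cdot \lend(X)$) that the paper leaves implicit.
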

\begin{proof} If $\mathcal{X}$ is of infinite type, then the corresponding closed set $\mathcal{U} = \mathcal{X} \cap \Ind A$ is infinite. By \Cref{geaan} there is a generic $A$-module $X \in \mathcal{U}$. Since the Krull-Gabriel dimension of $A$ is defined and the field is algebraically closed, it follows that $A$ is tame and $\mathcal{X}$ is of strongly unbounded type by \Cref{tame}. 
\end{proof}

Similar to the classical second Brauer-Thrall conjecture, we show that the above result can be extended to perfect infinite fields. For a field extension $L \supseteq k$ and $k$-algebra $A$, extension of scalars yields an $L$-algebra $A \otimes_k L$. For an $A$-module $X$ the space $X \otimes_k L$ is naturally an $A \otimes_k L$-module. Moreover, for an $A\otimes_k L$-module $X$ restriction of scalars along $A \to A \otimes_k L$ yields an $A$-module $r(X)$.  

\begin{lem}\label{restra} Let $k$ be a field, $L \supseteq k$ a field extension, $A$ a $k$-algebra and $\mathcal{X}$ a constructible subcategory of $ \Mod A$. There is a constructible subcategory $\tilde{\mathcal{X}}$ of $\Mod A \otimes_k L$ such that 
\begin{itemize}
    \item[(1)] for all $X\in \mathcal{X}$ extension of scalars yields $X\otimes_k L \in \tilde{\mathcal{X}}$, and
    \item[(2)] for all $X \in \tilde{\mathcal{X}}$ restriction of scalars along $A \to A \otimes_k L$ yields $r(X) \in \mathcal{X}$.
\end{itemize} 
Further, if $\mathcal{X}$ is of infinite type, then $\tilde{ \mathcal{X}}$ is of infinite type.
\end{lem}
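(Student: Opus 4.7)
The plan is to take $\tilde{\mathcal{X}} = \mathcal{X}_{F\circ r}$, where $F \in C(A)$ is any finitely presented functor with $\mathcal{X} = \mathcal{X}_F$ and $r \colon \Mod A\otimes_k L \to \Mod A$ is restriction of scalars along the canonical ring map $A \to A\otimes_k L$. Since $A\otimes_k L$ is free as a left $A$-module on any $k$-basis $B$ of $L$, restriction of scalars is just the functor forgetting the extra $L$-action, and in particular it commutes with filtered colimits and products. Consequently $F\circ r$ lies in $C(A\otimes_k L)$ and $\tilde{\mathcal{X}}$ is indeed a constructible subcategory of $\Mod A\otimes_k L$.

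Both stated properties of $\tilde{\mathcal{X}}$ are then easy. Property (2) is immediate from the definition: $Y\in\tilde{\mathcal{X}}$ iff $F(r(Y))=0$ iff $r(Y)\in\mathcal{X}$. For property (1), observe that for any $X\in\Mod A$ there is a canonical $A$-linear isomorphism $r(X\otimes_k L) \cong X^{(B)}$ (the tensor product decomposes along the basis $B$, and the $A$-action preserves each factor). Since $\mathcal{X}$ is definable it is closed under arbitrary direct sums, so $X \in \mathcal{X}$ implies $r(X\otimes_k L)\in\mathcal{X}$, i.e.\ $X\otimes_k L \in \tilde{\mathcal{X}}$.

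The substantive part is the last sentence. Pick pairwise non-isomorphic $X_1, X_2, \ldots$ in $\fin\mathcal{X}$. Since $X_i\otimes_k L$ is finite-dimensional over $L$ and lies in $\tilde{\mathcal{X}}$ by (1), Krull-Schmidt gives an indecomposable direct summand $Y_i \in \fin\tilde{\mathcal{X}}$. Suppose for contradiction that only finitely many isomorphism classes $Y^{(1)},\ldots,Y^{(n)}$ arise among the $Y_i$. For each $j$, $Y^{(j)}$ has finite length as a module over $\End_{A\otimes_k L}(Y^{(j)})$, and since $\End_A(r(Y^{(j)}))\supseteq \End_{A\otimes_k L}(Y^{(j)})$, the $A$-endolength of $r(Y^{(j)})$ is also finite. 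By \cite[Proposition 4.5]{Crawley-Boevey}, $r(Y^{(j)})$ then has only finitely many non-isomorphic indecomposable summands. On the other hand, $r(X_i\otimes_k L)\cong X_i^{(B)}$ is an infinite direct sum of copies of the indecomposable $X_i$ with local endomorphism ring, so by Azumaya's theorem every indecomposable summand of this module is isomorphic to $X_i$. Combining both descriptions, each $X_i$ must occur as an indecomposable summand of some $r(Y^{(j)})$; this leaves only finitely many possibilities for $X_i$, contradicting the choice.

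The main obstacle is precisely this final step, which hinges on reconciling the two different meanings of ``finite-dimensional'' (over $k$ for $\mathcal{X}$, over $L$ for $\tilde{\mathcal{X}}$) by combining the finite-endolength argument for each $r(Y^{(j)})$ with the infinite-sum Krull-Schmidt/Azumaya theorem applied to $X_i^{(B)}$. The rest of the proof is essentially formal, once one notes that $r$ is the forgetful functor for a free ring extension and hence automatically compatible with the definable/constructible formalism.
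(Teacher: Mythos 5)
Your proposal is correct and follows essentially the same route as the paper: the same choice $\tilde{\mathcal{X}} = \mathcal{X}_{F\circ r}$, the same direct verifications of (1) and (2), and the same decomposition-comparison argument via finite endolength and \cite[Proposition 4.5]{Crawley-Boevey} for the infinite-type assertion. (Minor remark: $B$ need not be infinite when $[L:k]<\infty$, so ``infinite direct sum'' should be ``direct sum indexed by $B$''; this does not affect the argument, which works for finite $B$ by ordinary Krull--Schmidt.)
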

\begin{proof} Let $\mathcal{X} = \mathcal{X}_F$ for $F\in C(A)$. Restriction of scalaras along $A \to A\otimes_k L$ is a  functor  $r\colon \Mod A\otimes_k L \to \Mod A$ that commutes with filtered colimits and products. It follows that $\tilde{\mathcal{X}} = \mathcal{X}_{F\circ r}$ is a constructible subcategory of $\Mod A\otimes_kL$, which consists of all $X \in \Mod A \otimes_k L$ with $r(X) \in \mathcal{X}$. Thus, (2) holds. For $X \in \mathcal{X}$ 
\begin{align*}
   r(X \otimes_k L ) = \bigoplus_{b\in B} X \in \mathcal{X},
\end{align*}
where $B$ is a $k$-basis of $L$. Hence, $X\otimes_k L \in \tilde{\mathcal{X}}$ and (1) holds.

Assume that $\tilde{\mathcal{X}}$ is not of infinite type. Let $X_1, \dots, X_n$ be,  up to isomorphism, a complete list of indecomposable modules in $\fin \tilde{\mathcal{X}}$. For indecomposable $X\in \fin \mathcal{X}$
\begin{align*}
\bigoplus_{b\in B} X \cong  r( X \otimes_k L) \cong \bigoplus_{i=1}^n r(X_i)^{t_i},
\end{align*}
where $B$ is a $k$-basis of $L$ and $ X\otimes_k L\cong \bigoplus_{i=1}^n X_i^{t_i}$ with $t_i>0$. Note that the endolength of $r(X \otimes_k L)$ is finite, since the dimension over $L$ is finite. Comparing the decompositions, it follows from \cite[Proposition 4.5, Remark 4.5]{Crawley-Boevey} that $X$ is isomorphic to an indecomposable direct summand of $r(X_i)$ for some $1\leq i \leq n$, which there are finitely many of, up to isomorphism. It follows that $\mathcal{X}$ is not of infinite type.
\end{proof}

\begin{thm}\label{kgbtt} Let $k$ be a perfect infinite field, $\bar{k}$ its algebraic closure, $A$ a finite dimensional $k$-algebra such that the Krull-Gabriel dimension of $A\otimes_k \bar{k}$ is defined and $\mathcal{X} \subseteq \Mod A$ a constructible subcategory. If $\mathcal{X}$ is of infinite type, then $\mathcal{X}$ is of strongly unbounded type. 
\end{thm}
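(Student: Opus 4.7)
The strategy is to reduce to the algebraically closed case already handled by \Cref{algcase} via extension of scalars to $\bar{k}$, and then to transfer the resulting strongly unbounded behaviour back to $\mathcal{X}$ by Galois descent. Perfectness of $k$ makes the descent clean, while infiniteness of $k$ supplies enough rational parameters to produce infinitely many indecomposables of each required dimension.

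Set $\tilde{A} = A \otimes_k \bar{k}$ and apply \Cref{restra} to obtain a constructible subcategory $\tilde{\mathcal{X}} \subseteq \Mod \tilde{A}$ of infinite type. Since the Krull--Gabriel dimension of $\tilde{A}$ is defined, \Cref{algcase} yields that $\tilde{\mathcal{X}}$ is of strongly unbounded type. Inspecting the proof of \Cref{tame}, there is a generic $\tilde{A}$-module $\tilde{X} \in \tilde{\mathcal{X}}$ of endolength $\ell$ together with an $\tilde{A}$-$\tilde{B}$-bimodule $\tilde{M}$ free of rank $\ell$ over $\tilde{B} = \bar{k}[x, f^{-1}]$, such that $Y_{\lambda, i} := \tilde{M} \otimes_{\tilde{B}} \bar{k}[x]/(x-\lambda)^i$ is indecomposable of $\bar{k}$-dimension $i\ell$, lies in $\tilde{\mathcal{X}}$, and the $Y_{\lambda, i}$ are pairwise non-isomorphic as $\lambda$ ranges over a cofinite subset of $\bar{k}$, for every $i > 0$.

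The next step is to descend $\tilde{M}$ to a finite Galois extension $L/k$. Because $\tilde{M}$ is determined by finitely many scalars (the matrix entries of the $\tilde{A}$-action and the polynomial $f$) and $k$ is perfect, such an $L$ exists together with an $A \otimes_k L$-$L[x, f^{-1}]$-bimodule $M_L$ satisfying $M_L \otimes_L \bar{k} \cong \tilde{M}$. For each $\lambda \in k$, set $Y^L_{\lambda, i} := M_L \otimes_{L[x, f^{-1}]} L[x]/(x-\lambda)^i$, an $A \otimes_k L$-module of $L$-dimension $i\ell$ whose base change to $\bar{k}$ is $Y_{\lambda, i}$. Applying \Cref{restra} to the extension $k \hookrightarrow L$ yields a constructible subcategory $\mathcal{X}_L \subseteq \Mod A \otimes_k L$ with $r(Z) \in \mathcal{X}$ for all $Z \in \mathcal{X}_L$. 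Using that the functor $F \in C(A)$ defining $\mathcal{X}$ commutes with arbitrary direct sums, one checks that $Z \in \mathcal{X}_L$ if and only if $Z \otimes_L \bar{k} \in \tilde{\mathcal{X}}$; hence $Y^L_{\lambda, i} \in \mathcal{X}_L$ and therefore $X^\lambda_i := r(Y^L_{\lambda, i}) \in \mathcal{X}$ is an $A$-module of $k$-dimension $i\ell [L:k]$.

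Finally, I would analyse the Galois structure of the $X^\lambda_i$. Tensoring back up,
\[ X^\lambda_i \otimes_k \bar{k} \;\cong\; \bigoplus_{\sigma \in \mathrm{Gal}(L/k)} {}^\sigma Y_{\lambda, i} \]
as $\tilde{A}$-modules, where ${}^\sigma Y_{\lambda, i}$ depends only on the coset $\sigma H$ for $H \leq \mathrm{Gal}(L/k)$ the stabilizer of $\tilde{M}$ up to isomorphism of bimodules. Setting $s := [\mathrm{Gal}(L/k):H]$, classical Galois descent for finite-dimensional modules over a perfect field shows that $V_{\lambda, i} := \bigoplus_{\sigma H \in \mathrm{Gal}(L/k)/H} {}^\sigma Y_{\lambda, i}$ descends to a unique indecomposable $A$-module $U^\lambda_i$ of $k$-dimension $si\ell$, and Noether--Deuring gives $X^\lambda_i \cong (U^\lambda_i)^{[L:k]/s}$. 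If $U^\lambda_i \cong U^\mu_i$ for $\lambda, \mu \in k$, then Krull--Schmidt forces $Y_{\mu, i}$ to lie in the Galois orbit of $Y_{\lambda, i}$, so at most $s$ values of $\mu$ yield the same isomorphism class as $\lambda$. Thus varying $\lambda$ over the infinite set $k$ produces infinitely many pairwise non-isomorphic indecomposables of $\mathcal{X}$ of the fixed $k$-dimension $n_i := si\ell$, and the sequence $n_1 < n_2 < \cdots$ witnesses that $\mathcal{X}$ is of strongly unbounded type. The principal obstacle is the descent step: arranging $M_L$ over a finite Galois extension and verifying the indecomposable decomposition of $X^\lambda_i$ predicted by Galois descent for finite-dimensional modules over a perfect field.
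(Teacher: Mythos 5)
Your strategy runs parallel to the paper's: base change to $\bar{A}$ via \Cref{restra}, apply \Cref{geaan}/\Cref{tame} to obtain the tube $\{M\otimes_B \bar{k}[x]/(x-\lambda)^i\}$ inside $\bar{\mathcal{X}}$, descend the bimodule to a finite extension of $k$, and restrict scalars back to produce modules in $\mathcal{X}$. The gap is in the final Galois-descent paragraph.

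The assertion that $V_{\lambda,i} = \bigoplus_{\sigma H\in\mathrm{Gal}(L/k)/H}{}^{\sigma}Y^L_{\lambda,i}$ \emph{descends} to a (unique, indecomposable) $A$-module $U^\lambda_i$ of $k$-dimension $si\ell$ is not a consequence of Galois-invariance of the isomorphism class; there is a Brauer/Schur-index obstruction to the existence of a $k$-form of a $\mathrm{Gal}$-stable module. A simple counterexample: take $k=\mathbb{R}$, $L=\bar{k}=\mathbb{C}$, $A=\mathbb{H}$, so $A\otimes_k L\cong M_2(\mathbb{C})$ with unique simple $V=\mathbb{C}^2$. Then $H=\mathrm{Gal}(\mathbb{C}/\mathbb{R})$, $s=1$, and your recipe predicts a $2$-dimensional $\mathbb{H}$-module $U$ with $r(V)\cong U^2$. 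But $\mathbb{H}$ has no $2$-dimensional real module and $r(V)\cong\mathbb{H}$ is indecomposable; neither the descent of $V$ nor the isomorphism $X^\lambda_i\cong (U^\lambda_i)^{[L:k]/s}$ holds. In your setting the Schur indices of the $Y^L_{\lambda,i}$ could a priori vary with $\lambda$, so even the dimension of the genuine indecomposable summands of $X^\lambda_i$ is not determined by $s,i,\ell$ alone, and the proposed constant $n_i=si\ell$ is unjustified.

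The paper's proof deliberately avoids this precision. After descending to a finite separable $F/k$ (Galois not required), it uses \Cref{lemma: sep extension}(ii) only to select \emph{some} indecomposable summand $X_{\lambda,i}$ of $r_2(N\otimes_C F[x]/(x-\lambda)^i)$ for which $N\otimes_C F[x]/(x-\lambda)^i$ is a summand of $X_{\lambda,i}\otimes_k F$. This yields the two-sided bound $i\ell\le\dim_k X_{\lambda,i}\le [F:k]\,i\ell$ and, by Krull--Schmidt, that $\lambda\mapsto X_{\lambda,i}$ is finite-to-one. Strong unboundedness then follows by pigeonhole over the finitely many admissible dimensions in each range, with no control of Schur indices needed. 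If you want to retain your cleaner statement, you would have to additionally show that the Schur index of $Y^L_{\lambda,i}$ over $k$ is $1$ (or at least constant) for almost all $\lambda$, which your argument does not supply.
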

\begin{proof} Let $\bar{A} =  A\otimes_k \bar{k}$ and assume that $\mathcal{X}$ is of infinite type. By \Cref{restra} there is a constructible subcategory $\bar{\mathcal{X}} \subseteq \Mod \bar{A}$ such that restriction of scalars $r \colon \Mod \bar{A} \to \Mod A$ and $(-) \otimes_k \bar{k} \colon \Mod A \to \Mod \bar{A}$ restrict to $\mathcal{\bar{X}} \to \mathcal{X}$ and $\mathcal{X} \to \bar{\mathcal{X}}$ respectively. Further, $\bar{\mathcal{X}}$ is of infinite type. Thus, the corresponding closed set $\mathcal{U} = \bar{\mathcal{X}} \cap \Ind \bar{A}$ is infinite. Since the Krull-Gabriel dimension of $\bar{A}$ is defined, there is a generic $\bar{A}$-module $X\in \mathcal{U}$ by \Cref{geaan}. Recall that a finite dimensional algebra over an algebraically closed field is tame if the Krull-Gabriel dimension is defined. Thus, by \Cref{tame} there is an $\bar{A}$-$B$-bimodule $M$ free over $B$ of rank $n\in \mathbb{N}$ with $B = \bar{k}[x,f^{-1}]$ such that
\begin{align*}
    \{M \otimes_B \bar{k}[x]/(x-\lambda)^i \mid i > 0\} \subseteq \bar{\mathcal{X}}
\end{align*}
for all but finitely many $\lambda \in \bar{k}$. Further, $M \otimes_B \bar{k}[x]/(x-\lambda)^i$ are non-isomorphic indecomposable $\bar{A}$-modules of dimension $i \cdot n$.

There is $m \in \mathbb{N}$ such that $\bar{A}$ is generated by $m$-many elements as a $\bar{k}$-algebra. The $\bar{A}$-$B$-bimodule $M$ is given my $n\times n$ matrices $M_1,  \dots, M_m$ over $B = \bar{k}[x,f^{-1}]$. Let $F/k$ be the field extension generated by the finitely many elements in $\bar{k}$ that appear as the coefficients of the entries of $M_i$ for $1\leq i \leq m$. Then $M_1, \dots, M_m$ over $C = F[x, f^{-1}]$ define an $A \otimes_k F$-$C$-bimodule $N$ such that $N \otimes_F \bar{k} \cong M$. Consider restriction of scalars $r_1 \colon \Mod \bar{A} \to \Mod A \otimes_k F$ and $r_2 \colon \Mod A \otimes_k F \to \Mod A$, so $r = r_2 \circ r_1$. Then 
\begin{align*}
   \bigoplus_{b\in D} r_2(N\otimes_C F[x]/(x-\lambda)^i)= r (M \otimes_B \bar{k}[x]/(x-\lambda)^i) \in \mathcal{X},
\end{align*}
where $D$ is an $F$-basis of $\bar{k}$, for all $i>0$ and all but finitely many $\lambda \in F$, which shows $r_2(N \otimes_C F[x]/(x-\lambda)^i) \in \mathcal{X}$. Since $M \otimes_B \bar{k}[x]/(x-\lambda)^i$ are non-isomorphic indecomposable $\bar{A}$-modules of dimension $i \cdot n$, the isomorphism 
\begin{align*}
    (N \otimes_C F[x]/(x-\lambda)^i ) \otimes_F \bar{k} \cong M \otimes_B \bar{k}[x]/(x-\lambda)^i
\end{align*}
shows that $N \otimes_C F[x]/(x-\lambda)^i$ are non-isomorphic indecomposable $ A \otimes_k F$-modules of dimension $i \cdot n$. Because $k$ is perfect, the natural epimorphism
 \begin{align*} r_2 (N \otimes_C F[x]/(x- \lambda)^i) \otimes_k F\longrightarrow N \otimes_C F[x]/(x-\lambda)^i
 \end{align*}
 splits as $A\otimes_k F$-modules by \Cref{lemma: sep extension} (2). It follows that $N\otimes_C F[x]/(x-\lambda)^i$ is isomorphic to a direct summand of $L \otimes_k X_{\lambda,i}$, where $X_{\lambda,i}$ is an indecomposable direct summand of the $A$-module $r_2 (N \otimes_C F[x]/(x-\lambda)^i)$. In particular
 \begin{align*}
    i \cdot n\leq  \dim X_{\lambda,i} \leq [F:k] \cdot i \cdot n.  
 \end{align*}
 Now $X_{\lambda, i} \in \mathcal{X}$ for all but finitely many $\lambda \in F$ (thus infinitely many $\lambda \in F$, since $k \subseteq F$ is infinite) and all $i>0$.  Further, for fixed $\lambda$ we have $X_{\lambda,i} \cong X_{\mu,i}$ for at most finitely many $\mu$, since the modules $N \otimes_C F[x]/(x-\lambda)^i$ are non-isomorphic. In total, the existence of the $X_{\lambda,i}$ shows that $\mathcal{X}$ is of strongly unbounded type by the pigeonhole principle.
\end{proof}

\section{Exact Structures and Matrix Reductions}

Let $k$ be a field and $A$ a finite dimensional $k$-algebra. We provide examples of constructible subcategories of $\Mod A$ that arise from exact structures as well as matrix reductions. As a consequence, this yields a systematic method to translate a matrix reduction into a reduction of exact structures in the sense of \cite{BHLR}. This is done by connecting matrix reductions with the theory of purity, see Section 1. For exact structures this was already achieved in \cite{Sch}.

An exact structure $\mathcal{E}$ on $\mod A$ is a collection of short exact sequences in $\mod A$ subject to some closure properties, see \cite{Buehler} for more details. We say that $\mathcal{E}$ is \emph{finitely generated} if $\mathcal{E}$ equals the smallest exact structure on $\mod A$ containing a specific short exact sequence. The following shows that finitely generated exact structures on $\mod A$ not only yield examples of constructible subcategories of $\Mod A$, but give a different point of view on them.

\begin{rem}\label{exact}\rm Let $\inj A \subseteq \mod A$ be the full subcategory of injective $A$-modules. By {\cite[Theorem 3.1, Corollary 4.1]{Sch}} the following are one to one.
\begin{itemize}
    \item[(1)] Exact structures $\mathcal{E}$ on $\mod A$.
    \item[(2)] Definable subcategories $\mathcal{X}$ of $\Mod A$ containing $\inj A$.
\end{itemize}
Given an exact structure $\mathcal{E}$ on $\mod A$ the corresponding definable subcategory $\mathcal{X}_\mathcal{E}$ consists of all $X\in \Mod A$ that behave injectively with respect to $\mathcal{E}$, that is, for every short exact sequence $0 \to L \to M \to N \to 0$ in $\mathcal{E}$ and morphism $L \to X$ there is $M \to X$ making the diagram
\begin{equation*}
    \begin{tikzcd}
        & X \\
        0 \arrow[r] & L \arrow[u] \arrow[r] & M \arrow[lu] \arrow[r] & N  \arrow[r] & 0
    \end{tikzcd}
\end{equation*}
commute. Further, for the corresponding closed set $\mathcal{U}_\mathcal{E} = \mathcal{X}_\mathcal{E}\cap \Ind A$ in the Ziegler spectrum the complement $\Ind A \setminus \mathcal{U}_\mathcal{E}$ is compact if and only if $\mathcal{E}$ is finitely generated by \cite[Proposition 3.2]{Sch}. Thus, by \Cref{int} (1) the exact structure $\mathcal{E}$ is finitely generated if and only if $\mathcal{X}_\mathcal{E}$ is a constructible subcategory of $\Mod A$. 

We conclude that finitely generated exact structures $\mathcal{E}$ on $\mod A$ are one to one with constructible subcategories $\mathcal{X}$ of $\Mod A$ containing $\inj A$. Note that the condition $\inj A \subseteq \mathcal{X}$ is not a proper restriction, since we may freely add/remove finitely many finite dimensional  indecomposable modules to/from $\mathcal{X}$ and remain constructible. This follows from the characterization in \Cref{int} (1) and the fact that finite dimensional indecomposable modules are closed and open points inside $\Ind A$, see Section 1.5 (4).
\end{rem}

Let us now consider matrix reductions. There are various frameworks in which they can be performed. We consider a very general one in terms of lift categories, which was developed by Crawley-Boevey in \cite{CB}. One of its main features is that it works over arbitrary fields. In the case of algebraically closed fields it generalizes the framework of normal free triangular linear bocses, see \cite[p. 2]{CB}. A connection between generic modules and representation type, which was originally proven via lift categories \cite[Theorem 9.6]{Crawley-Boevey},  can partially be proven using the theory of purity \cite[Theorem 9.6]{Herzog}, see also \Cref{big}. This already hints at a connection between matrix reductions and purity.

Often, the first step in performing matrix reductions is replacing $\Mod A$ with a full subcategory of the category of morphisms  between projective $A$-modules, which we denote by $\textnormal{Proj}^2A$. The objects in $\textnormal{Proj}^2A$ are morphisms $P \to Q$ for projective $A$-modules $P, Q$ and the morphisms in $\textnormal{Proj}^2 A$ are given by suitable commutative squares
\begin{equation*}
    \begin{tikzcd}
        P \arrow[r]\arrow[d] & Q \arrow[d] \\
        \tilde{P} \arrow[r] & \tilde{Q}. 
    \end{tikzcd}
\end{equation*}
The full subcategory in question is given by all $\varphi\colon P \to Q$ in $\textnormal{Proj}^2 A$ fulfilling $\im \varphi \subseteq J_A Q$, which we denote by $\mathcal{P}_1 (A)$, where $J_A$ is the Jacobson radical of $A$. We also consider the full subcategory of all $\varphi\colon P \to Q$ in $\mathcal{P}_1(A)$ with $\ker \varphi \subseteq J_A P$, which we denote by $\mathcal{P}_2 (A)$.

\begin{rem}\label{defs}\rm It is known that for a finite dimensional algebra $A$ the category of projective $A$-modules $\Proj A$ is finitely accessible, see Section 1.4, and the full subcategory of finitely presented objects coincides with the category $\proj A$ of finite dimensional projective $A$-modules. It follows that also $\textnormal{Proj}^2 A$ is finitely accessible and the full subcategory of finitely presented objects coincides with the category $\textnormal{proj}^2A$ of morphisms between finite dimensional projective $A$-modules, see for example \cite[Remark 11.1.3]{Krause4}. The functors
\begin{align*}
    F\colon \textnormal{Proj}^2 A \longrightarrow \Ab,\quad &(\varphi\colon P \to Q ) \mapsto (J_AQ + \im \varphi)/J_A Q, \\
    G\colon \textnormal{Proj}^2 A \longrightarrow \Ab, \quad &(\varphi \colon P \to Q ) \mapsto (J_A P +\ker \varphi)/ J_A P
\end{align*}
commute with filtered colimits and products, because so does multiplication with $J_A$ as well as taking images and kernels. It follows that $F, G\in C(\textnormal{Proj}^2 A)$ and $\mathcal{X}_F$, respectively $\mathcal{X}_G$, is a definable subcategory of $\textnormal{Proj}^2 A$, which consist of all $\varphi\colon P \to Q$ with $\im \varphi \subseteq J_A Q$, respectively $\ker \varphi \subseteq J_A P$. Hence, $\mathcal{P}_1(A) = \mathcal{X}_{F}$ and $\mathcal{P}_2(A) = \mathcal{X}_F \cap \mathcal{X}_G$ are definable subcategories of $\textnormal{Proj}^2 A $.
\end{rem}

By the above remark $\Ind \mathcal{P}_2(A) \subseteq \Ind \mathcal{P}_1(A) \subseteq \Ind \textnormal{Proj}^2 A$ are closed subsets, see Section 1.2. We investigate them next.
 
\begin{lem}\label{discrm} Let $P \to Q$ in $\textnormal{proj}^2 A$ be indecomposable. Then $\{P \to Q\}$ is a closed and open set in $\Ind \textnormal{Proj}^2 A$. 
\end{lem}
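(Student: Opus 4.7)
The plan is to follow the template of \Cref{embed}: realise $\textnormal{Proj}^2 A$ as a definable subcategory of the module category of a finite dimensional algebra, and then transport Section 1.5 (4) across this embedding by means of the homeomorphism of Ziegler spectra from Section 1.2.

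Concretely, I would take $B = T_2(A)$, the $2 \times 2$ upper-triangular matrix algebra over $A$, which is again finite dimensional. Under the standard equivalence, $\Mod B$ identifies with the morphism category of $\Mod A$, so an object is a triple $(M_1, M_2, f\colon M_1 \to M_2)$. Under this identification, $\textnormal{Proj}^2 A$ corresponds to those triples with $M_1, M_2 \in \Proj A$, and an indecomposable $X = (P \to Q) \in \textnormal{proj}^2 A$ becomes a finite dimensional indecomposable $B$-module of $k$-dimension $\dim P + \dim Q$.

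The next step is to show that $\textnormal{Proj}^2 A$ is a definable subcategory of $\Mod B$. For this it is enough that $\Proj A$ is a definable subcategory of $\Mod A$: the two evaluation functors $\Mod B \to \Mod A$ sending $(M_1, M_2, f)$ to $M_1$ and $M_2$ respectively commute with filtered colimits and products, so each of the conditions $M_i \in \Proj A$ cuts out a definable subcategory of $\Mod B$, and $\textnormal{Proj}^2 A$ is their intersection. That $\Proj A$ is definable in $\Mod A$ uses that $A$ is left and right artinian, hence left perfect and right coherent: products of projective $A$-modules are flat by coherence and therefore projective by perfectness, while filtered colimits and pure subobjects of projective modules are flat and again projective for the same reason.

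With $\textnormal{Proj}^2 A$ thus exhibited as a definable subcategory of $\Mod B$, Section 1.2 gives a homeomorphism from $\Ind \textnormal{Proj}^2 A$ onto a closed subset of $\Ind B$. The finite dimensional indecomposable $B$-module corresponding to $X$ is a closed and open point of $\Ind B$ by Section 1.5 (4), so its image in $\Ind \textnormal{Proj}^2 A$ is closed and open there. The only step requiring genuine work is the verification that $\Proj A$ is definable in $\Mod A$; the rest is a formal transport through the embedding.
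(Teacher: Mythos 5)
Your proof follows the same strategy as the paper's: embed $\textnormal{Proj}^2 A$ as a definable subcategory of $\Mod T_2(A)$ and then transport Section 1.5 (4) through the homeomorphism of Ziegler spectra from Section 1.2. The only cosmetic differences are that you pull back $\Proj A$ along the two evaluation functors and intersect, where the paper pulls back along the single functor $(X\to Y)\mapsto X\oplus Y$, and that you spell out the coherence/perfectness argument for the definability of $\Proj A$ in $\Mod A$, which the paper asserts without comment.
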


\begin{proof} The category $\textnormal{Mod}^2 A$ of morphisms between $A$-modules is equivalent to the module category $\Mod \tilde{A}$ with 
\begin{align*}
    \tilde{A} = \begin{pmatrix}
        A & A \\
        0 & A
    \end{pmatrix}
\end{align*}
and the equivalence sends a morphism between finite dimensional $A$-modules to a finite dimensional $\tilde{A}$-module. We show that $\textnormal{Proj}^2 A$ is a definable subcategory of $\textnormal{Mod}^2 A$. In that case $\Ind \textnormal{Proj}^2 A$ identifies with a closed subset of $\Ind \textnormal{Mod}^2 A \cong \Ind \tilde{A}$ and the claim follows by Section 1.5 (4). 

The functor $f\colon \textnormal{Mod}^2 A \to \Mod A, (X\to Y) \mapsto X \oplus Y$ commutes with filtered colimits and products, since they are computed locally in $ \textnormal{Mod}^2 A$. Since $\Proj A$ is a definable subcategory of $\Mod A$, also the preimage $\textnormal{Proj}^2 A = f^{-1}(\Proj A)$ is a definable subcategory of $\textnormal{Mod}^2 A$. This follows from the closure properties of definable subcategories and the fact that $f$ preserves filtered colimits, products and pure-exact sequences, see Section 1.2 and Theorem 1.3.
\end{proof}

The following shows that the Ziegler spectra of $\mathcal{P}_1(A), \mathcal{P}_2(A),\textnormal{Proj}^2 A$ and $\Mod A$ are essentially the same, up to discrete finite subsets.

\begin{prop}\label{similar}  Let $P_1, \dots, P_n$ be all indecomposable projective $A$-modules up to isomorphism.
\begin{itemize}
    \item[(1)] There are disjoint unions 
\begin{align*}
    \Ind \mathcal{P}_2(A)\, \dot{\cup }\, \{P_1 \xrightarrow{} 0, \dots, P_n\xrightarrow{} 0\} &= \Ind \mathcal{P}_1(A),\\
    \Ind \mathcal{P}_1(A) \,\dot{\cup }\,\{P_1 \xrightarrow{1} P_1, \dots, P_n \xrightarrow{1} P_n\} &= \Ind \textnormal{Proj}^2 A.
\end{align*}
\item[(2)] The functor $f\colon \mathcal{P}_2(A) \to \Mod A, (\varphi\colon P \to Q) \mapsto \coker \varphi$ is full, dense and reflects isomorphisms. Further, $f$ commutes with filtered colimits and products, and induces a homeomorphism $\Ind \mathcal{P}_2(A) \to \Ind A$.
\end{itemize}
\end{prop}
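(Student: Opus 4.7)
For part (1), the plan is a direct summand extraction based on the Jacobson radical. Given $\varphi\colon P \to Q$ in $\textnormal{Proj}^2 A$ with $\im \varphi \not\subseteq J_A Q$, the induced map $\bar\varphi\colon P/J_A P \to Q/J_A Q$ between semisimple modules is nonzero on some simple summand, and lifting through projectivity (using that $A$ is semiperfect, so that indecomposable projective covers of the simple pieces exist) produces a direct summand of $\varphi$ isomorphic to $P_i \xrightarrow{1} P_i$. Symmetrically, if $\varphi \in \mathcal{P}_1(A)$ satisfies $\ker \varphi \not\subseteq J_A P$, a nonzero element of $\ker \varphi$ surjects onto a simple summand of $P/J_A P$, and a Nakayama-type change of basis pulls an indecomposable projective summand of $P$ into $\ker \varphi$, so that $\varphi$ splits off a summand $P_i \to 0$. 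Thus an indecomposable object outside the smaller subcategory must itself be of the listed extra form. Since these extras are finite-dimensional objects of $\textnormal{proj}^2 A$, they are isolated indecomposable pure-injective points of $\Ind \textnormal{Proj}^2 A$ by \Cref{discrm}; because they lie in the larger category but not the smaller, this yields the two disjoint unions.

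For the first part of (2) the plan is mostly routine. Density follows from semiperfectness of $A$: every $A$-module has a minimal projective presentation, which is precisely an object of $\mathcal{P}_2(A)$. Fullness is the lifting property of projectives applied twice. Reflection of isomorphisms holds because an isomorphism of cokernels lifts to a pair $(a,b)$ between minimal projective presentations, whose components must themselves be isomorphisms by the minimality conditions and Nakayama's lemma. For commutation with filtered colimits and products, I note that $\mathcal{P}_2(A)$ is a definable subcategory of $\textnormal{Mod}^2 A$ (by combining \Cref{defs}, \Cref{discrm}, and the transitivity of definability from \Cref{trans}), that $\Proj A \subseteq \Mod A$ is definable because $A$ is perfect (products of projectives remain projective), and that the cokernel functor commutes with filtered colimits and products in any module category. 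Hence $f$ is a definable functor.

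For the induced map $\bar f\colon \Ind \mathcal{P}_2(A) \to \Ind A,\ \varphi \mapsto \coker \varphi$, Theorem 1.3 gives preservation of pure-injectivity, and reflection of isomorphisms gives injectivity on isomorphism classes. Surjectivity requires showing that the minimal projective presentation $\varphi$ of an indecomposable pure-injective $X$ is itself indecomposable pure-injective in $\mathcal{P}_2(A)$. Indecomposability is easy: any nontrivial decomposition $\varphi = \varphi_1 \oplus \varphi_2$ would give $X = \coker \varphi_1 \oplus \coker \varphi_2$, contradicting the indecomposability of $X$. The homeomorphism (and, en route, the pure-injectivity of $\varphi$) will be handled by showing that the precomposition functor $f^{*}\colon C(\Mod A) \to C(\mathcal{P}_2(A))$ is an equivalence of essentially small abelian categories; by Krause's correspondence (Theorem 1.2) this forces a homeomorphism of Ziegler spectra. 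Essential surjectivity of $f^{*}$ uses density of $f$ together with the description of finitely presented functors via presentations by representable functors; fully-faithfulness requires that morphisms in $C(\mathcal{P}_2(A))$ between functors of the form $F \circ f$ are determined by their effect on cokernels. Verifying this last step, ensuring that null-homotopic ambiguities in lifting morphisms of minimal projective presentations vanish in the functor category, is the main technical obstacle.
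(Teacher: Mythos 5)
Your argument for part (1) is the same idea as the paper's, just spelled out: the paper asserts the decomposition $\varphi \cong (\varphi': P' \to Q') \oplus (\tilde P \to 0) \oplus (\tilde Q \xrightarrow{1} \tilde Q)$ with $\varphi' \in \mathcal{P}_2(A)$ and appeals to \Cref{discrm}, while you supply the Nakayama-type splitting that produces the summands. That part is fine.

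For part (2), the first assertions (density via minimal projective presentations, fullness, reflection of isomorphisms, commutation with filtered colimits and products) match the paper's. One small omission: in your indecomposability step for $\varphi$, from $\varphi = \varphi_1 \oplus \varphi_2$ you get $X = \coker\varphi_1 \oplus \coker\varphi_2$, but you still need that $\coker\varphi_i = 0$ forces $\varphi_i = 0$ for $\varphi_i \in \mathcal{P}_2(A)$ (a surjection between projectives with radical image and radical kernel is zero by Nakayama). The paper notes this explicitly; it is easy but should not be skipped.

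The genuine gap is the homeomorphism. You propose to prove it by showing that precomposition $f^{*}\colon C(\Mod A) \to C(\mathcal{P}_2(A))$ is an equivalence of abelian categories, and you explicitly flag full-faithfulness of $f^{*}$ — controlling the null-homotopic ambiguity in lifting morphisms between minimal presentations — as an unresolved "main technical obstacle." That step is not carried out, so your proof is incomplete exactly where the real content lies; moreover, the pure-injectivity of the minimal presentation $\varphi$ of an indecomposable pure-injective $X$ is also deferred to this unproven claim, so even the bijectivity of the induced map on spectra is not fully established in your write-up. The paper avoids the functor-category computation altogether: once $f$ is known to be definable, Theorem 1.3 (1) gives preservation of pure-injectivity, fullness gives that $f(\varphi)$ has local endomorphism ring whenever $\End(\varphi)$ does, density and reflection of isomorphisms give bijectivity on spectra, and Theorem 1.3 (2) (a definable functor takes the closure of a set to the closure of the set of indecomposable summands of its image) immediately makes the bijection a homeomorphism, since a bijection commuting with closure operators in both directions is a homeomorphism. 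You should replace the $f^{*}$-equivalence route by a direct appeal to Theorem 1.3 (2), which you already have in hand; that closes the gap.
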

\begin{proof} (1) Every morphism in $\textnormal{Proj}^2 A$ is isomorphic to a direct sum
\begin{align*}
     ({\varphi} \colon {P} \to {Q}) \oplus (\tilde{P} \xrightarrow{0} 0) \oplus (\tilde{Q} \xrightarrow{1} \tilde{Q})
\end{align*}
with $\im {\varphi} \subseteq J_A {Q}$ and $\ker {\varphi} \subseteq J_A {P}$, so ${\varphi}\in \mathcal{P}_2(A)$. Further, the zero morphism $(\tilde{P} \to 0) \in \mathcal{P}_1(A)\setminus (\mathcal{P}_2(A) \setminus 0 )$ is indecomposable if and only if $\tilde{P}$ is indecomposable, and the identity morphism $(\tilde{Q} \to \tilde{Q}) \in \textnormal{Proj}^2 A \setminus (\mathcal{P}_1(A) \setminus 0)$ is indecomposable if and only if $\tilde{Q}$ is indecomposable. This shows the claim with \Cref{discrm}.

(2) The first assertion is well-known and may be deduced by considering the first two terms of minimal projective resolutions, compare \cite[Theorem 1.7]{CB}. The functor $f$ commutes with filtered colimits and products, since so does taking cokernels. Thus, by Theorem 1.3 (1) $f$ preserves pure-injectivity. Because $f$ is also full, it follows that for every $\varphi \in \Ind \mathcal{P}_2(A)$ either $f(\varphi) = 0$ or $f(\varphi) \in \Ind A$. It is not hard to see that $f(\varphi) = 0$ implies $\varphi =0$. Hence, $f$ induces a map $\Ind \mathcal{P}_2(A)\to \Ind A$, which is bijective since $f$ is dense and reflects isomorphisms. By Theorem 1.3 (2) this map is a homeomorphism. 
\end{proof}

\begin{coro}\label{constrmat} There is a one to one correspondence 
\begin{equation*}
    \begin{Bmatrix}
        \text{constructible}\\
        \text{subcategories of }\mathcal{P}_1 (A) 
    \end{Bmatrix}  \longleftrightarrow 
    \begin{Bmatrix}
        \text{constructible}\\
        \text{subcategories of }\Mod A 
    \end{Bmatrix} \times \begin{Bmatrix}
        \text{subsets of}\\
        \{P_1, \dots, P_n\}
    \end{Bmatrix},
\end{equation*}
where $P_1, \dots, P_n$ are all projective $A$-modules up to isomorphism. 
\end{coro}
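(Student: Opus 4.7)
The plan is to use the characterization in \Cref{int}(1): constructible subcategories of a definable category correspond to closed subsets of the Ziegler spectrum with compact complement. Combined with the decomposition $\Ind \mathcal{P}_1(A) = \Ind \mathcal{P}_2(A) \,\dot{\cup}\, \{P_1 \to 0, \dots, P_n \to 0\}$ from \Cref{similar}(1) and the homeomorphism $\Ind \mathcal{P}_2(A) \simeq \Ind A$ from \Cref{similar}(2), the problem reduces to tracking compactness across these identifications.

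The key topological observation is that each $P_i \to 0$ is an indecomposable object of $\textnormal{proj}^2 A$, so by \Cref{discrm} the singleton $\{P_i \to 0\}$ is clopen in $\Ind \textnormal{Proj}^2 A$, and hence also in the closed subspace $\Ind \mathcal{P}_1(A)$. Thus $\Ind \mathcal{P}_2(A)$, being the complement of a finite union of clopen points, is itself clopen in $\Ind \mathcal{P}_1(A)$. It follows that closedness and compactness of subsets of $\Ind \mathcal{P}_2(A)$ are detected identically whether computed inside $\Ind \mathcal{P}_2(A)$ or inside $\Ind \mathcal{P}_1(A)$; combined with the homeomorphism of \Cref{similar}(2), the same holds for $\Ind A$.

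Given a constructible $\mathcal{X} \subseteq \mathcal{P}_1(A)$ with associated closed set $\mathcal{U}_\mathcal{X} \subseteq \Ind \mathcal{P}_1(A)$, I extract the pair $(\mathcal{Y}', S)$ where $S = \{i : (P_i \to 0) \in \mathcal{U}_\mathcal{X}\}$ and $\mathcal{Y}'$ is the definable subcategory of $\Mod A$ obtained by transporting $\mathcal{U}_\mathcal{X} \cap \Ind \mathcal{P}_2(A)$ across the homeomorphism. Splitting $\Ind \mathcal{P}_1(A) \setminus \mathcal{U}_\mathcal{X}$ as the disjoint union of the finite set $\{P_i \to 0 : i \notin S\}$ and $\Ind \mathcal{P}_2(A) \setminus (\mathcal{U}_\mathcal{X} \cap \Ind \mathcal{P}_2(A))$, compactness of the total complement together with the clopen observation forces compactness of the second piece, so by \Cref{int}(1) the subcategory $\mathcal{Y}'$ is constructible.

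Conversely, from a pair $(\mathcal{Y}', S)$ with $\mathcal{Y}'$ constructible in $\Mod A$, I transport the corresponding closed set along the homeomorphism to a closed subset $\mathcal{U}' \subseteq \Ind \mathcal{P}_2(A)$, and set $\mathcal{U} = \mathcal{U}' \cup \{P_i \to 0 : i \in S\}$. Both summands are closed in $\Ind \mathcal{P}_1(A)$ (the first because $\Ind \mathcal{P}_2(A)$ is closed there, the second as a finite union of closed points by \Cref{discrm}), and the complement is the disjoint union of the compact set $\Ind \mathcal{P}_2(A) \setminus \mathcal{U}'$ with a finite set, hence compact. This yields a constructible subcategory of $\mathcal{P}_1(A)$, and the two assignments are mutually inverse by construction. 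There is no substantive obstacle once the clopen decomposition is in place; everything else is bookkeeping transport along $\Ind A \simeq \Ind \mathcal{P}_2(A) \hookrightarrow \Ind \mathcal{P}_1(A)$.
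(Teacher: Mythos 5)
Your proposal is correct and follows essentially the same path as the paper: both use \Cref{int}(1) to characterize constructible subcategories as closed sets with compact complement, invoke \Cref{discrm} to make the singletons $\{P_i \to 0\}$ clopen, and then split closed sets across the decomposition $\Ind \mathcal{P}_1(A) = \Ind \mathcal{P}_2(A)\,\dot\cup\,\{P_1\to 0,\dots,P_n\to 0\}$ before transporting along the homeomorphism $\Ind \mathcal{P}_2(A)\cong\Ind A$ of \Cref{similar}. Your version merely spells out both directions of the correspondence in more detail than the paper's terser writeup.
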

\begin{proof} We use the characterization of constructible subcategories in \Cref{int} (1). A constructible subcategory of $\mathcal{P}_1(A)$ corresponds to a closed set $\mathcal{V} \subseteq \Ind \mathcal{P}_1(A)$ such that $\Ind \mathcal{P}_1(A)  \setminus \mathcal{V}$ is compact. The set
\begin{align*}
\mathcal{U} = \{P_1 \to 0, \dots, P_n \to 0\} \subseteq \Ind \mathcal{P}_1(A)
\end{align*}
consists of finitely many closed and open singletons, see \Cref{discrm}. Such a closed set $\mathcal{V}$ corresponds to a pair $(\mathcal{V}_1, \mathcal{V}_2)$ via $\mathcal{V}_1 = \mathcal{V} \setminus \mathcal{U}$ and $\mathcal{V}_2 = \mathcal{V} \cap \mathcal{U}$ with the following properties. The set $\mathcal{V}_1 \subseteq \Ind \mathcal{P}_1 (A) \setminus \mathcal{U}$ is closed, $(\Ind \mathcal{P}_1(A) \setminus \mathcal{U}) \setminus \mathcal{V}_1$ is compact and $\mathcal{V}_2 \subseteq \mathcal{U}$ is arbitrary. Now the claim follows from the homeomorphism $\Ind \mathcal{P}_1(A) \setminus \mathcal{U} = \Ind \mathcal{P}_2(A)\cong \Ind A$, see \Cref{similar}.
\end{proof}

Recall that the first step in performing matrix reductions is to replace $\Mod A$ with $\mathcal{P}_1 (A)$. We show that the matrix reductions in the framework of lift categories give rise to constructible subcategories of $\mathcal{P}_1(A)$, which correspond to constructible subcategories of $\Mod A$ by \Cref{constrmat}, up to some finite information. Now the constructible subcategories of $\Mod A$ correspond to finitely generated exact structures on $\mod A$, again up to finite information, see \Cref{exact}. In this way, matrix reductions can be translated to exact structures. To achieve this, we first introduce the framework of lift categories.

We restrict to the finite dimensional setting over $k$. A \emph{lift pair} $(R, \xi)$ consists of a finite dimensional $k$-algebra $R$ and an exact sequence
\begin{align*}
    \xi \colon 0 \longrightarrow M \longrightarrow E \xlongrightarrow{\pi} R \longrightarrow 0 
\end{align*}
of finite dimensional $R$-$R$-bimodules. The corresponding \emph{lift category} $\xi (R)$ has as objects the pairs $(P,e)$, where $P$ is a projective $R$-module and $e$ is a section of the $R$-module map $\pi \otimes 1 \colon E \otimes_R P \rightarrow P$. Since $P$ is projective, such an $e$ always exists. A morphism from $(P,e)$ to $(P',e')$ is an $R$-module map $\theta \colon P \rightarrow P'$ making the diagram 
\begin{equation*}
    \begin{tikzcd}
    P \arrow[r, "e"] \arrow[d, "\theta", swap] & E \otimes_R P \arrow[d, "1 \otimes \theta"] \\
    P' \arrow[r, "e' "] & E \otimes_R P'
    \end{tikzcd}
\end{equation*}
commute.

\begin{exa}\label{start} \rm Let $R =   \left(\begin{smallmatrix}A&0\\0&A\end{smallmatrix}\right)$ and consider the short exact sequence
  \begin{align*}
      \xi \colon 0 \longrightarrow \begin{pmatrix}
 0 & J_A\\
 0 & 0
  \end{pmatrix} \longrightarrow \begin{pmatrix}
 A & J_A\\
 0 & A
  \end{pmatrix} \longrightarrow \begin{pmatrix}
 A & 0\\
 0 & A
  \end{pmatrix} \longrightarrow 0
  \end{align*}
of $R$-$R$-bimodules. An object in $\xi(R)$ is given by a projective $R$-module $P$, which corresponds to a pair $(P_1, P_2)$ of projective $A$-modules, and the section $e$, which is given by a morphism $\varphi \colon P_1 \to P_2$ with $\im \varphi \subseteq J_A P_2$. In that way $\xi(R)$ is equivalent to $\mathcal{P}_1(A)$, compare also \cite[Example 1.7, Theorem 1.7]{CB}.
\end{exa}

We show that performing a matrix reduction for a definable lift category yields a constructible subcategory, which is again a definable lift category. Since the starting lift category is definable, see \Cref{start} and \Cref{defs}, we remain definable when performing matrix reductions. We dot not know whether every lift category is definable.

\begin{lem}\label{filtprod} Let $(R, \xi)$ be a lift pair. Then $\xi(R)$ has filtered colimits and products.
\end{lem}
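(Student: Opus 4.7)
The plan is to construct filtered colimits and products in $\xi(R)$ pointwise in $\Mod R$ and then verify that the resulting pairs still lie in $\xi(R)$. The two inputs I will rely on are: (a) $\Proj R$ is a definable subcategory of $\Mod R$ (as used in the proof of \Cref{discrm}, this holds because $R$ is finite dimensional, hence right perfect and right coherent, so flat modules are projective and products of flats are flat), whence $\Proj R$ is closed under filtered colimits and products; and (b) $E$ is finite dimensional over $k$ and therefore finitely presented as a right $R$-module, so $E\otimes_R(-)$ commutes with filtered colimits (always true for a tensor functor) and with products (by finite presentation of $E$).

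For a filtered diagram $(P_i,e_i)_{i\in I}$ in $\xi(R)$ I would set $P:=\varinjlim P_i$ in $\Mod R$; by (a) this $P$ is projective. Using the canonical isomorphism $E\otimes_R P\cong\varinjlim E\otimes_R P_i$ from (b), the sections $e_i$ assemble into a map $e\colon P\to E\otimes_R P$, and the identities $(\pi\otimes 1)\circ e_i=1_{P_i}$ transfer to $(\pi\otimes 1)\circ e=1_P$ by the universal property of the colimit. Hence $(P,e)\in\xi(R)$. A compatible family of morphisms $\theta_i\colon(P_i,e_i)\to(Q,f)$ in $\xi(R)$ induces a unique $R$-linear map $\theta\colon P\to Q$, and the defining square for $\theta$ commutes because it commutes at each $i$ and $E\otimes_R(-)$ preserves the colimit; this gives the universal property $\varinjlim(P_i,e_i)=(P,e)$.

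For products, I would set $P:=\prod_i P_i$ in $\Mod R$, which is again projective by (a). The canonical morphism $E\otimes_R\prod_i P_i\to\prod_i(E\otimes_R P_i)$ is an isomorphism by (b), so the sections $e_i$ combine componentwise into a map $e\colon P\to E\otimes_R P$ with $(\pi\otimes 1)\circ e=1_P$. Thus $(P,e)\in\xi(R)$, and the universal property of the product in $\xi(R)$ is inherited from that of $P$ in $\Mod R$, since the defining square for the induced map $(Q,f)\to(P,e)$ commutes componentwise.

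I do not foresee a genuine obstacle here: the argument reduces entirely to the two general facts (a) and (b), both of which use only that $R$ and $E$ are finite dimensional over $k$. The only point worth checking carefully is that the section condition $(\pi\otimes 1)\circ e=1_P$ really does pass through the pointwise construction, and this is immediate once one observes that $\pi\otimes 1$ is natural in $P$ and that both filtered colimits and products are computed pointwise in $\Mod R$.
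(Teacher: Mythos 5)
Your proof is correct and follows essentially the same route as the paper: both arguments build the filtered colimit or product underlyingly in $\Mod R$, invoke the finite dimensionality of $R$ and $E$ over $k$ to see that $\Proj R$ is closed under the relevant (co)limits and that $E\otimes_R(-)$ commutes with them, and then assemble the section $e$ via the resulting canonical isomorphism. Your version is a bit more explicit about verifying the section identity and the universal property, but these are exactly the details the paper leaves as "straightforward to check."
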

\begin{proof} Let $E$ be the middle term of $\xi$. Because $R$ and $E$ are finite dimensional over $k$, it follows that $\Proj R$ is closed under filtered colimits and products, and  $E\otimes_{R} (-)$ commutes with filtered colimits and products.

We show that $\xi (R)$ has products. Let $(P_i, e_i)_{i\in I}$ be a collection of objects in $\xi (R)$ and consider the product of $R$-modules $P = \prod P_i$. Then $P$ is projective. Let $e$ be the map
\begin{align*}
    P = \prod P_i \xrightarrow{(e_i)_{i\in I}} \prod E\otimes_R P_i \cong E\otimes_R \prod P_i = E\otimes_R P.
\end{align*}
It is straightforward to check that $(P,e)$ is a product of the $(P_i, e_i)$.

To show that $\xi (R)$ has filtered colimits, consider a directed system $(P_i, e_i)_{i\in I}$ of objects in $\xi(R)$. This yields a directed system $(P_i)_{i\in I}$ of projective $R$-modules. The filtered colimit of $R$-modules $P = \varinjlim P_i$ is again projective. Let $e$ be the map
\begin{align*}
        P = \varinjlim P_i \xrightarrow{\varinjlim e_i} \varinjlim E\otimes_R P_i \cong E\otimes_R \varinjlim P_i = E\otimes_R P.
\end{align*}
Again, it is easy to check that $(P,e)$ is a filtered colimit of the $(P_i, e_i)$.
\end{proof}

Fix a lift pair $(R, \xi)$ with $\xi \colon 0 \to M \to E \xrightarrow{\pi} R \to 0$. We give a brief summary of the matrix reductions for $\xi(R)$. For more details, see \cite[Section 3.1, Section 4.1]{CB}.

Pick a maximal sub-bimodule $N \subseteq M$ and let $J$ be the Jacobson radical of $R$. Consider the exact sequences 
\begin{align*}
\xi_N &\colon 0 \longrightarrow \bar{M} \longrightarrow \bar{E} \longrightarrow R \longrightarrow 0,\\
\xi_{NJ} &\colon  0 \longrightarrow  \bar{M}/(\bar{M}\cap (\bar{E}J+J\bar{E})) \longrightarrow \bar{E}/(\bar{E}J+J\bar{E}) \longrightarrow R/J \longrightarrow 0
\end{align*}
with $\bar{M} = M/N$ and $\bar{E} = E/N$. There are functors $\xi(R) \xlongrightarrow{\sigma_N} \xi_N(R) \xlongrightarrow{\rho_J} \xi_{NJ}(R/J)$ defined as follows. For $X = (P,e) \in \xi(R)$ let $\sigma_N(X) = (P, \bar{e})$, where $\bar{e}$ equals the composition $P \xrightarrow{e} E \otimes_R P \to \bar{E} \otimes_R P$. For $Z = (P,f) \in \xi_N (R)$ let $\rho_J(Z) = (R/J\otimes_R P, \bar{f})$, where $\bar{f}$ equals the composition
\begin{align*}
    R/J \otimes_R P\xlongrightarrow{1 \otimes f} R/J \otimes_R \bar{E} \otimes_R P \xlongrightarrow{p} \bar{E}/(\bar{E}J+J\bar{E}) \otimes_R P
\end{align*}
with $p$ the natural projection.

The category $\xi_{NJ}(R/J)$ is equivalent to $\Mod B$, where $B$ is either a hereditary finite dimensional algebra, or the product of a semisimple artinian ring and a matrix ring over a skew tensor algebra, see the proof of \cite[Lemma 9.1]{Crawley-Boevey}. In any case, $B$ is a finitely generated $k$-algebra. The \emph{reduction functor} $\textnormal{red} \colon \xi(R) \to \Mod B$ is the composition
\begin{align*}
    \xi(R) \xlongrightarrow{ \sigma_N} \xi_N(R) \xlongrightarrow{\rho_{NJ}} \xi_{NJ}(R/J) \simeq \Mod B.
\end{align*}
For a finite dimensional $B$-module $X \in \Mod B$ let $\textnormal{Add}\, X \subseteq \Mod B$ be the full subcategory of all direct summands of coproducts of $X$. Then $\textnormal{red}^{-1}(\textnormal{Add}\, X)$ is equivalent to $\xi_X(R_X)$ for a lift pair $(R_X, \xi_X)$, see \cite[Theorem 4.2, Theorem 3.1]{CB}.

\begin{prop}\label{reduction} The reduction functor $\textnormal{red} \colon \xi(R) \to \Mod B$ commutes with filtered colimits and products. If $\xi(R)$ is definable, then $\xi_X(R_X)$ is equivalent to a constructible subcategory of $\xi(R)$.
\end{prop}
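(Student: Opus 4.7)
The plan is to establish the two claims in sequence, using the factorisation $\textnormal{red} = \varepsilon \circ \rho_J \circ \sigma_N$, where $\varepsilon \colon \xi_{NJ}(R/J) \xrightarrow{\sim} \Mod B$ is the given equivalence, which automatically preserves filtered colimits and products.

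For the first claim I would verify separately that $\sigma_N$ and $\rho_J$ commute with filtered colimits and products. Both are described by tensoring the underlying projective module with a fixed bimodule ($R$ with $R/J$ in the case of $\rho_J$; and in the case of the sections by $\bar E$, respectively $\bar E/(\bar EJ + J\bar E)$) and post-composing with natural quotient maps. By \Cref{filtprod}, filtered colimits and products in the lift categories are computed on the underlying projective modules, with the sections transported by applying $\bar E \otimes_R (-)$. Since all the bimodules involved are finite dimensional over $k$, they are finitely presented over $R$ and $R/J$, and tensoring with them commutes with both filtered colimits and products. This gives the claim on the nose.

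For the second claim, assume $\xi(R)$ is definable. By the cited result $\textnormal{red}^{-1}(\textnormal{Add}\, X)$ is equivalent to $\xi_X(R_X)$, so it suffices to show that $\textnormal{red}^{-1}(\textnormal{Add}\, X)$ is a constructible subcategory of $\xi(R)$. First I would show that $\textnormal{Add}\, X$ is itself a constructible subcategory of $\Mod B$. Writing $X = X_1 \oplus \cdots \oplus X_s$ with the $X_i$ indecomposable, \Cref{disc} (applied to the finitely generated $k$-algebra $B$) gives that each $\{X_i\}$ is closed and open in $\Ind B$, so $\{X_1, \dots, X_s\}$ is a clopen subset of the compact space $\Ind B$, and its complement is compact. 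By Proposition~1.2 the corresponding definable subcategory $\mathcal{X}_G \subseteq \Mod B$ is constructible for some $G \in C(\Mod B)$. A short argument using that each $X_i$ has finite endolength and is therefore $\Sigma$-pure-injective identifies $\mathcal{X}_G$ with $\textnormal{Add}\, X$: products of copies of $X$ decompose into summands in $\{X_1,\dots,X_s\}$ and pure submodules of $\Sigma$-pure-injectives split off.

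To pull this back, I would use that $\textnormal{red}$ is definable (by the first part) and that $\xi(R)$ is definable, so that $G \circ \textnormal{red}$ lies in the essentially small category $C(\xi(R))$. Then
\begin{align*}
\textnormal{red}^{-1}(\textnormal{Add}\, X) \;=\; \{Y \in \xi(R) \mid G(\textnormal{red}(Y)) = 0\} \;=\; \mathcal{X}_{G \circ \textnormal{red}}
\end{align*}
is by definition a constructible subcategory of $\xi(R)$, and combined with the quoted equivalence $\xi_X(R_X) \simeq \textnormal{red}^{-1}(\textnormal{Add}\, X)$ this finishes the argument. The main conceptual hurdle is the identification $\mathcal{X}_G = \textnormal{Add}\, X$, which rests on the $\Sigma$-pure-injectivity (equivalently, product-completeness) of finite-endolength modules; once that is in hand, everything else is functoriality of the constructible/definable correspondence.
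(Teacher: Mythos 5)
Your argument follows the paper's proof in essentially every step: you use the factorisation of $\textnormal{red}$ and the fact that filtered colimits and products in lift categories are computed on the underlying projective modules, then identify $\{X_1,\dots,X_s\}$ as a clopen subset of $\Ind B$ via \Cref{disc}, obtain $F$ via compactness and Proposition~1.2, and pull back along $\textnormal{red}$. The only cosmetic difference is that where you sketch the identification $\mathcal{X}_F = \textnormal{Add}\, X$ via $\Sigma$-pure-injectivity of finite-endolength modules, the paper simply cites the corresponding statement from Krause's habilitation; the underlying argument is the same.
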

\begin{proof} For a finite dimensional $R$-module $M$ the functor $M \otimes_R (-)$ commutes with filtered colimits and products. Further, filtered colimits and products in $\xi(R)$ are computed within $\Mod R$, see the proof of \Cref{filtprod}. Thus, by the definition of $\sigma_N$ and $\rho_{NI}$ both functors commute with filtered colimits and products. It follows that the reduction functor commutes with filtered colimits and products.

Let $\mathcal{U} = \{X_1, \dots, X_n\}$ be the set of all indecomposable direct summands of $X$, up to isomorphism. Then $\mathcal{U}$ is closed and open in $\Ind B$ by \Cref{disc}. In particular, the complement $\Ind B \setminus \mathcal{U}$ is compact by Section 1.5 (1). Thus, there is $F\in C(B)$ with $\mathcal{U} = \mathcal{X}_F \cap \Ind B$ by Proposition 1.2. Now $\mathcal{X}_F \subseteq \Mod B$ is the smallest definable subcategory of $\Mod B$ containing $X$. Hence, $\mathcal{X}_F = \textnormal{Add}\, X$ by \cite[Section 2.5 (7)]{Krause0}. If $\xi(R)$ is definable, then $F\circ \textnormal{red} \in C(\xi(R))$, since the reduction functor commutes with filtered colimits and products. Thus, $\mathcal{X}_{F\circ \textnormal{red}}$ is a constructible subcategory of $\xi(R)$, which consists of all $X\in \xi(R)$ with $\textnormal{red}(X) \in \mathcal{X}_F = \textnormal{Add}\, X$. This yields the desired result with the equivalence $\textnormal{red}^{-1}(\textnormal{Add}\, X) \simeq \xi_X (R_X)$.
\end{proof}

We call $(R_X, \xi_X)$ a \emph{reduction} of $(R, \xi)$ and denote it by $(R, \xi) \leadsto (R_X, \xi_X)$. Performing matrix reductions for $A$ is given by a chain of reductions
\begin{align*}
    (\xi_0, R_0) \leadsto (\xi_1, R_1) \leadsto \dots \leadsto (\xi_n, R_n)
\end{align*}
with $\xi_0(R_0) \simeq \mathcal{P}_1(A)$, see \Cref{start}.

\begin{coro}\label{matcon} For all $1\leq i \leq n$ the category $\xi_i(R_i)$ is equivalent to a constructible subcategory of $\mathcal{P}_1(A)$.
\end{coro}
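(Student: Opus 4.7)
The plan is to argue by induction on $i$, using transitivity of constructibility (\Cref{trans}) together with \Cref{reduction} as the engine for the inductive step.

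First I would record the base case. By \Cref{start} the category $\xi_0(R_0)$ is equivalent to $\mathcal{P}_1(A)$, which is trivially a constructible subcategory of itself: take $F = 0 \in C(\mathcal{P}_1(A))$, so that $\mathcal{P}_1(A) = \mathcal{X}_F$. Here I should also note, for the induction, that $\mathcal{P}_1(A)$ is a definable category in its own right, since by \Cref{defs} it is a definable subcategory of $\textnormal{Proj}^2 A$ and definable subcategories of definable categories are definable (see Section~1.2).

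For the inductive step, suppose $\xi_i(R_i)$ is equivalent to a constructible subcategory $\mathcal{C}_i \subseteq \mathcal{P}_1(A)$. Then $\mathcal{C}_i$ is in particular a definable subcategory of $\mathcal{P}_1(A)$, hence itself a definable category; thus $\xi_i(R_i)$ is definable. This is exactly the hypothesis needed to apply \Cref{reduction} to the reduction $(\xi_i, R_i) \leadsto (\xi_{i+1}, R_{i+1}) = (R_{X_i}, \xi_{X_i})$, which gives that $\xi_{i+1}(R_{i+1})$ is equivalent to a constructible subcategory of $\xi_i(R_i) \simeq \mathcal{C}_i$. Finally, \Cref{trans} applied to the chain of inclusions $\xi_{i+1}(R_{i+1}) \hookrightarrow \mathcal{C}_i \hookrightarrow \mathcal{P}_1(A)$ (with both steps being constructible) produces the constructible embedding $\xi_{i+1}(R_{i+1}) \hookrightarrow \mathcal{P}_1(A)$, completing the induction.

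There is no real obstacle: the work was already done in \Cref{reduction} (which provides the single reduction step as a constructible subcategory) and in \Cref{trans} (which gives transitivity). The only care required is bookkeeping the two-tier use of the transitivity lemma, namely that being a constructible subcategory of a constructible subcategory is again constructible, so that one can compose $n$ successive reduction steps and stay constructible inside the fixed ambient definable category $\mathcal{P}_1(A)$.
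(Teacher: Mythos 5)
Your proof is correct and follows exactly the argument the paper gives (which is stated in one sentence): induction on $i$, using that $\xi_0(R_0)\simeq\mathcal{P}_1(A)$ is definable, \Cref{reduction} for the inductive step, and \Cref{trans} for transitivity. You have merely unpacked the bookkeeping.
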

\begin{proof} Since $\xi_0(R_0) \simeq \mathcal{P}_1(A)$ is definable, see \Cref{defs}, the claim follows by induction with \Cref{reduction} and \Cref{trans}.
\end{proof}

The concept of reductions of exact structures was introduced in \cite{BHLR} and it is described by considering chains 
\begin{align*}
    \mathcal{E}_0 \subseteq \mathcal{E}_1 \subseteq \dots \subseteq \mathcal{E}_n
\end{align*}
in the lattice of exact structures on $\mod A$. The authors introduce the notion of length and Gabriel-Roiter measure relative to an exact structure and investigate how these change when performing reductions of exact structures. Further, it was shown in \cite[Section 4]{BHLR} for a given example that matrix reductions translate to reductions of exact structures. The following theorem shows that this is always possible in the framework of lift categories.

\begin{thm}\label{translation} Let $k$ be a field and $A$ a finite dimensional $k$-algebra. Matrix reductions for $A$ in terms of lift categories translate to reductions of exact structures. Moreover, the exact structures involved are finitely generated.
\end{thm}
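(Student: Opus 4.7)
The plan is to chain together the correspondences assembled in Section 5. Start with a sequence of matrix reductions
\[
(\xi_0, R_0) \leadsto (\xi_1, R_1) \leadsto \dots \leadsto (\xi_n, R_n)
\]
with $\xi_0(R_0) \simeq \mathcal{P}_1(A)$. Each single reduction step $(\xi_i, R_i) \leadsto (\xi_{i+1}, R_{i+1})$ is of the form $\xi_{i+1}(R_{i+1}) \simeq \text{red}_i^{-1}(\text{Add}\, X_i) \subseteq \xi_i(R_i)$, so iterating the embeddings from \Cref{matcon} yields a decreasing chain of full subcategories
\[
\mathcal{P}_1(A) \simeq \xi_0(R_0) \supseteq \xi_1(R_1) \supseteq \dots \supseteq \xi_n(R_n),
\]
and by induction via \Cref{reduction} together with \Cref{trans}, each term embeds as a constructible subcategory $\mathcal{Y}_i$ of $\mathcal{P}_1(A)$.

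Next, apply \Cref{constrmat} to each $\mathcal{Y}_i$. This produces a constructible subcategory $\mathcal{X}_i \subseteq \Mod A$ together with a subset $S_i$ of the finite set $\{P_1, \dots, P_n\}$ of indecomposable projectives. Because $\mathcal{Y}_0 \supseteq \mathcal{Y}_1 \supseteq \dots \supseteq \mathcal{Y}_n$, the correspondence of \Cref{constrmat} forces $\mathcal{X}_0 \supseteq \mathcal{X}_1 \supseteq \dots \supseteq \mathcal{X}_n$ (the factor in $\{P_1,\dots,P_n\}$ is finite and contributes only discrete information). Using \Cref{exact}, the condition $\inj A \subseteq \mathcal{X}_i$ can be arranged by adding to each $\mathcal{X}_i$ the finitely many indecomposable injective $A$-modules which are missing; since $\inj A$ is a fixed finite set of closed-and-open points in $\Ind A$, this enlargement preserves constructibility (as noted at the end of \Cref{exact}) and preserves the chain. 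Call the resulting constructible subcategories $\mathcal{X}_0' \supseteq \mathcal{X}_1' \supseteq \dots \supseteq \mathcal{X}_n'$.

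Finally, invoke \Cref{exact}: since each $\mathcal{X}_i'$ is a constructible subcategory of $\Mod A$ containing $\inj A$, it corresponds to a unique finitely generated exact structure $\mathcal{E}_i$ on $\mod A$ via $\mathcal{X}_i' = \mathcal{X}_{\mathcal{E}_i}$. An exact structure has \emph{more} short exact sequences precisely when \emph{fewer} modules behave injectively with respect to it, so the inclusion $\mathcal{X}_{i+1}' \subseteq \mathcal{X}_i'$ translates into $\mathcal{E}_i \subseteq \mathcal{E}_{i+1}$. Thus the chain of matrix reductions produces an increasing chain
\[
\mathcal{E}_0 \subseteq \mathcal{E}_1 \subseteq \dots \subseteq \mathcal{E}_n
\]
of finitely generated exact structures on $\mod A$, i.e.\ a reduction of exact structures in the sense of \cite{BHLR}.

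The main technical point—and where care is needed—is verifying that the adjustments by finite sets of indecomposable projectives (from \Cref{constrmat}) and indecomposable injectives (to meet the hypothesis of \Cref{exact}) are compatible with the nesting of the chain; this ultimately reduces to the fact that every finite dimensional indecomposable module gives a closed-and-open point of $\Ind A$ (\Cref{disc}), so finite symmetric differences at the level of $\Ind A$ neither destroy constructibility nor alter inclusions between the underlying definable subcategories beyond these discrete corrections. All remaining assertions—monotonicity of the chain, finite generation of each $\mathcal{E}_i$, and the translation of the reduction step—are then direct consequences of the correspondences in \Cref{exact}, \Cref{constrmat}, and \Cref{matcon}.
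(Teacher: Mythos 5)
Your proof is correct and follows essentially the same route as the paper: chain the correspondences from \Cref{matcon}, \Cref{constrmat}, and \Cref{exact} and check that the inclusions propagate. The only difference is expository: you explicitly enlarge each constructible subcategory of $\Mod A$ by the finitely many indecomposable injectives before applying \Cref{exact}, whereas the paper applies the injective-defining formula directly to the constructible subcategories $\mathcal{Y}_i$ and only afterwards remarks that $\mathcal{Y}_i$ is recovered from $\mathcal{E}_i$ up to finitely many indecomposable injectives; both handle the same discrepancy, relying on the observation at the end of \Cref{exact} that adding/removing finitely many clopen points of $\Ind A$ preserves constructibility.
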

\begin{proof} The following are the objects involved for translating matrix reductions to reductions of exact structures, starting with a chain of reductions in the framework of lift categories:
\begin{align*}
       \text{matrix reductions:}\quad &(\xi_0, R_0) \leadsto (\xi_1, R_1) \leadsto \dots \leadsto (\xi_n, R_n)\\
       \text{constructible subcategories of $\mathcal{P}_1(A)$:} \quad & \mathcal{X}_0 \supseteq \mathcal{X}_1 \supseteq \dots \supseteq \mathcal{X}_n\\
       \text{constructible subcategories of $\Mod A$:}\quad & \mathcal{Y}_0 \supseteq \mathcal{Y}_1 \supseteq \dots \supseteq \mathcal{Y}_n\\
       \text{reductions of exact structures:} \quad & \mathcal{E}_0 \subseteq \mathcal{E}_1 \subseteq \dots \subseteq \mathcal{E}_n.
\end{align*}
By \Cref{matcon} the lift categories $\xi_i(R_i)$ are equivalent to constructible subcategories $\mathcal{X}_i$ of $\mathcal{P}_1(A)$. By \Cref{constrmat} these yield constructible subcategories $\mathcal{Y}_i$ of $\Mod A$, where $\mathcal{Y}_i$ is the smallest definable subcategory of $\Mod A$ containing $\coker \varphi$ with $(\varphi\colon P \to Q)  \in \mathcal{X}_i \cap \Ind \mathcal{P}_1(A)$. We may recover $\mathcal{X}_i$ from $\mathcal{Y}_i$, up to whether or not $(P_j \to 0) \in \mathcal{X}_i$, where $P_1, \dots, P_m$ are all indecomposable projective $A$-modules up to isomorphism. By \Cref{exact} the constructible subcategories $\mathcal{Y}_i \subseteq \Mod A$ yield finitely generated exact structures on $\mod A$ via
\begin{align*}
    \mathcal{E}_i = \{0 \to L \xrightarrow{f} M \to N \to 0 \text{ exact} \mid \coker \Hom_{A}(f, X) =0\text{ for all } X \in \mathcal{Y}_i\}
\end{align*}
Again, we may recover $\mathcal{Y}_i$ from $\mathcal{E}_i$, up to whether or not $I_j \in \mathcal{X}_i$, where $I_1, \dots, I_m$ are all indecomposable injective $A$-modules up to isomorphism.
\end{proof}

\appendix
\section{Curves in schemes of representations}

\begin{center}by Andres Fernandez Herrero
\end{center}

\medskip
\medskip

The main goal of this appendix is to prove \Cref{thm: main result appendix}. We work over a fixed infinite ground field $k$. We denote the cardinality of $k$ by $|k|$. For any two $k$-schemes $X,Y$, we denote by $X \times Y$ the fiber product over $k$. If $Y = \Spec(R)$ for some $k$-algebra $R$, we also write $X_R:= X \times \Spec(R)$.

\subsection{Points of smooth schemes and \'etale morphisms}
     For any point $x$ of a $k$-scheme $X$, we denote the residue field of $x$ by $k_x$.

\begin{lem} \label{lemma: finite extension etale morphisms}
    Let $\pi: X \to Y$ be an \'etale morphism between two finite type schemes over $k$. Then, there exists a positive integer $m>0$ such that for all points $y \in Y$ and every $x \in \pi^{-1}(y) \subset X$, the induced extension of residue fields $k_x \supset k_y$ is a separable extension of degree at most $m$.
\end{lem}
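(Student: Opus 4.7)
The plan is to combine the local structure theorem for \'etale morphisms with the quasi-compactness of $X$. First note that separability of the residue field extensions $k_x \supset k_y$ is automatic: an \'etale morphism is by definition unramified (and flat), and unramifiedness already incorporates the requirement that all induced residue field extensions be finite and separable. So only the uniform bound on the degree needs work.

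Recall the local structure theorem: every \'etale morphism is locally \emph{standard \'etale}. Concretely, every $x \in X$ admits an affine open neighborhood $U = \Spec(A) \subset X$ mapping into an affine open $V = \Spec(B) \subset Y$ such that $A \cong B[T]/(f(T))[g^{-1}]$ with $f \in B[T]$ monic and $f'$ a unit in $A$. I would use this presentation to bound fibers locally: for any $y \in V$, the fiber of $\pi|_U$ over $y$ is $\Spec\bigl(k_y[T]/(\bar f(T))[\bar g^{-1}]\bigr)$, whose closed points correspond to those monic irreducible factors $h$ of $\bar f \in k_y[T]$ for which $\bar g \notin (h)$, with residue field $k_y[T]/(h)$. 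Hence for every $x' \in U$ over $y$, the extension $k_{x'} \supset k_y$ has degree $\deg h \le \deg f$, so the local bound is simply $\deg f$.

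To globalize, I would use that $X$ is of finite type over $k$, hence quasi-compact. Covering $X$ by standard \'etale opens as above and extracting a finite subcover $U_1, \dots, U_n$ with associated monic polynomials $f_1, \dots, f_n$, I set $m := \max_i \deg(f_i)$. For any $y \in Y$ and $x \in \pi^{-1}(y)$, the point $x$ lies in some $U_i$ (whose image is contained in an affine open $V_i \subset Y$ containing $y$), so $[k_x : k_y] \le \deg(f_i) \le m$.

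I do not expect a real obstacle here: the only point to verify carefully is that the standard \'etale presentation exists around every point of $X$ and maps into an affine open of $Y$, which is exactly the content of the local structure theorem for \'etale morphisms. The uniform bound is then an immediate consequence of quasi-compactness.
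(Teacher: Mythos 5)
Your proof is correct, and it takes a genuinely different route from the paper's. The paper works on the base $Y$: it observes that the fiber $\pi^{-1}(y)$ is $\Spec$ of a finite product of separable extensions of $k_y$, so bounding residue degrees reduces to bounding $\dim_{k_y}(\mathcal{O}(\pi^{-1}(y)))$, i.e.\ the number of geometric connected components of the fibers; it then invokes constructibility of that function in $y$ (Stacks Tag 055I) together with Noetherianity of $|Y|$ to get a uniform bound. You instead work on the source $X$: you cover $X$ by finitely many standard \'etale charts $B[T]/(f)[g^{-1}]$ (possible since $X$ is quasi-compact), read off $[k_x : k_y] \le \deg f$ directly from the shape of the fiber ring $k_y[T]/(\bar f)[\bar g^{-1}]$, and take the maximum of the finitely many degrees. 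Your argument is somewhat more hands-on, trading the constructibility-of-component-counts input for the local structure theorem for \'etale morphisms; the paper's argument is shorter given the cited references and generalizes more readily to the flat-finite-presentation setting, where one would not have a standard \'etale presentation but would still have constructibility of fiber invariants. Both are standard and correct.

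One small point worth making explicit: when you say the closed points of the fiber correspond to monic irreducible factors $h$ of $\bar f$ with $\bar g \notin (h)$ and have residue field $k_y[T]/(h)$, you are implicitly using that the fiber is zero-dimensional so every point is closed, and that $f$ monic forces $\bar f$ monic of the same degree (otherwise $\deg \bar f < \deg f$ could occur and you'd still have a bound, but it is cleaner to note monicity is preserved). Neither affects the validity of the argument.
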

\begin{proof}
    By \cite[\href{https://stacks.math.columbia.edu/tag/02GL}{Tag 02GL}]{sp}, for any $y \in Y$ the preimage $\pi^{-1}(y)$ is isomorphic to $\Spec(A_y)$, where $A_y$ is a finite dimensional $k_y$-algebra of the form $A_y = \prod_{i} k_{x_i}$ for a finite set of separable field extensions $k_{x_i} \supset k_y$. It suffices to find a uniform upper bound for the degrees of the extensions $k_{x_i}$ that does not depend on $y$. Alternatively, it is enough to find a uniform upper bound on $\dim_{k_y}(A_y)$ that is independent of the choice of $y \in Y$. Note that $\dim_{k_y}(A_y)$ is the number of geometric connected components of the fiber $\pi^{-1}(y)$. By \cite[\href{https://stacks.math.columbia.edu/tag/055I}{Tag 055I}]{sp}, it follows that the function $y \mapsto \dim_{k_y}(A_y)$ is locally constructible. Since the topological space of $Y$ is Noetherian, there is a uniform upper bound for this function, as desired.
\end{proof}

\begin{lem} \label{lemma: density of points and smoothness}
    Let $X$ be a finite type scheme over $k$. If there exists a geometrically reduced $k$-scheme $Y$ and a scheme-theoretically dense morphism $Y \to X$, then there is a dense open subscheme $U \subset X$ such that $U$ is smooth.
\end{lem}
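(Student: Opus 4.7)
The plan is to reduce the problem to the statement that $X$ is geometrically reduced over $k$, and then invoke the standard characterization (see e.g.\ \cite[Tag 056V]{sp}) that a scheme locally of finite type over a field admits a dense open smooth subscheme if and only if it is geometrically reduced.

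As a first step I would show that $X$ itself is reduced. Since $Y$ is geometrically reduced, it is in particular reduced, and therefore the morphism $f\colon Y \to X$ factors through the reduced closed subscheme $X_{\mathrm{red}} \subset X$. The scheme-theoretic image of $f$ is thus contained in $X_{\mathrm{red}}$; since by hypothesis this image equals $X$, we conclude $X = X_{\mathrm{red}}$.

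To upgrade to geometric reducedness I would base change along $\Spec(\bar{k}) \to \Spec(k)$ for an algebraic closure $\bar{k}$ of $k$. Since $Y$ is geometrically reduced, $Y_{\bar{k}}$ is reduced. To repeat the argument of the first step for the induced morphism $f_{\bar{k}}\colon Y_{\bar{k}} \to X_{\bar{k}}$, it suffices that $f_{\bar{k}}$ is still scheme-theoretically dense, which in turn follows from the fact that the formation of scheme-theoretic image commutes with flat base change for quasi-compact morphisms \cite[Tag 081I]{sp}. This then yields $X_{\bar{k}} = (X_{\bar{k}})_{\mathrm{red}}$, i.e., $X_{\bar{k}}$ is reduced, so $X$ is geometrically reduced.

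The main technical point is the quasi-compactness hypothesis, since $Y$ is a priori an arbitrary $k$-scheme. This can be arranged by working locally on the Noetherian scheme $X$: cover $X$ by finitely many affine opens $V \subset X$ and pull back to $Y$; since it is enough to establish the density/smoothness conclusion locally on $X$, one may then replace $f^{-1}(V) \to V$ by a quasi-compact subscheme still mapping scheme-theoretically densely into $V$ (for instance by choosing a finite cover that hits each generic point of $V$ and a closed point in its closure witnessing the schematic density condition), which reduces to the qcqs setting. Alternatively one may avoid the base-change lemma by a direct stalkwise verification at the generic points of $X_{\bar{k}}$, using that scheme-theoretic density gives an injection of the zero-dimensional local ring $\mathcal{O}_{X_{\bar{k}},\eta}$ into a stalk of $f_{\bar{k},*}\mathcal{O}_{Y_{\bar{k}}}$, which is reduced because $Y_{\bar{k}}$ is. Once the geometric reducedness of $X$ is in hand, \cite[Tag 056V]{sp} immediately produces the desired dense smooth open $U \subset X$.
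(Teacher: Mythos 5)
Your proof uses the same key reduction as the paper: by \cite[Tag 056V]{sp} it suffices to show $X$ is geometrically reduced. The paper then simply cites \cite[Cor.~11.10.7]{ega4} for this fact, whereas you attempt to prove it directly by base-changing to $\bar k$ and comparing scheme-theoretic images. Your direct argument is essentially sound, and you correctly flag the genuine subtlety, namely that $Y$ is an arbitrary $k$-scheme, so the morphism $f\colon Y\to X$ need not be quasi-compact and formation of scheme-theoretic image need not commute with flat base change. Your first workaround does go through and can be made clean: since $X$ is Noetherian, the scheme-theoretic images of $W\to X$, as $W$ ranges over quasi-compact opens of $Y$, form an increasing chain of closed subschemes; this chain stabilizes at some $Z_0$, and then $f$ factors through $Z_0$ (check locally on $Y$ using any quasi-compact open containing both a given point and $W_0$), forcing $Z_0=X$. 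Hence one may replace $Y$ by a quasi-compact open and apply \cite[Tag 081I]{sp}. However, this justification is glossed over in your write-up (``choosing a finite cover that hits each generic point\ldots witnessing the schematic density condition'' is not a precise argument), and your second, ``stalkwise'' alternative has a real gap: injectivity of $\mathcal{O}_X\to f_*\mathcal{O}_Y$ is \emph{not} automatic from scheme-theoretic dominance when $f$ is not quasi-compact, because the kernel need not be a quasi-coherent ideal and hence need not correspond to a proper closed subscheme through which $f$ factors. So you should first perform the quasi-compact reduction before taking stalks. In short: same strategy as the paper for the first half, a self-contained replacement for the EGA citation in the second half, correct in spirit but in need of the tightened quasi-compact reduction above, and the alternative stalkwise route should be dropped or postponed until after that reduction.
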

\begin{proof}
    In view of \cite[\href{https://stacks.math.columbia.edu/tag/056V}{Tag 056V}]{sp}, it suffices to show that $X$ is geometrically reduced over $k$. This, in turn, follows from \cite[Cor. 11.10.7]{ega4}.
\end{proof}

For any given positive integer $m$, we denote by $|X|_{\leq m}$ the set of closed points $x$ of $X$ such that the residue field $k_x \supset k$ is a separable extension of degree at most $m$.
\begin{lem} \label{lemma: cardinality of points in smooth schemes}
    Let $X$ be a smooth positive-dimensional scheme over $k$. Then, there is some positive integer $m>0$ such that the cardinality of $|X|_{\leq m}$ is at least $|k|$.
\end{lem}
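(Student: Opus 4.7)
The plan is to reduce to counting $k$-rational points inside a non-empty open subset of affine space, and then to pull those points back along an étale chart using \Cref{lemma: finite extension etale morphisms} to control residue field degrees.

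First I would pick an irreducible component of $X$ of dimension $n \geq 1$; since $X$ is smooth over $k$, this component is again smooth over $k$. Using the étale local structure of smooth morphisms (\cite[\href{https://stacks.math.columbia.edu/tag/039P}{Tag 039P}]{sp}), after restricting to a suitable non-empty open subscheme $V$ of this component, I obtain an étale morphism $\pi\colon V \to \mathbb{A}^n_k$. Applying \Cref{lemma: finite extension etale morphisms} to $\pi$ produces a uniform integer $m>0$ such that for every $y \in \mathbb{A}^n_k$ and every $x \in \pi^{-1}(y)$, the extension $k_x \supset k_y$ is separable of degree at most $m$. Since $\pi$ is étale it is open, so $\pi(V) \subseteq \mathbb{A}^n_k$ is a non-empty open set and therefore contains a basic open $D(f)$ for some non-zero $f \in k[x_1, \ldots, x_n]$.

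The core counting step is to establish $|D(f)(k)| \geq |k|$, which is an elementary induction on $n$: a non-zero polynomial in one variable has only finitely many roots in $k$, and since $|k|$ is infinite, $|k|^n = |k|$ as cardinals. For each such $k$-rational point $y \in D(f) \subseteq \pi(V)$, the fiber $\pi^{-1}(y)$ is a non-empty finite discrete scheme of closed points of $V$, each with residue field separable of degree at most $m$ over $k_y = k$; hence each lies in $|V|_{\leq m}$. Because $X$ is of finite type over $k$ and therefore Jacobson, closed points of the open subscheme $V$ are closed in $X$, giving
\begin{equation*}
|X|_{\leq m} \;\geq\; |V|_{\leq m} \;\geq\; |D(f)(k)| \;\geq\; |k|,
\end{equation*}
as required.

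I do not anticipate a genuine obstacle. The only mildly delicate points are invoking the étale local structure of smooth schemes to produce $\pi$, and identifying closed points of $V$ with closed points of $X$; both are handled by standard facts (smoothness and the Jacobson property), and the cardinality bound for $D(f)(k)$ is routine.
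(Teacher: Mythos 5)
Your proof is correct and follows essentially the same route as the paper: both reduce to an open subscheme admitting an \'etale morphism to $\mathbb{A}^n_k$, invoke \Cref{lemma: finite extension etale morphisms} for a uniform degree bound, and then count $k$-points of a non-empty open subset of affine space (the paper via the projection $\mathbb{A}^n_k \to \mathbb{A}^1_k$, you via a basic open $D(f)$ and induction — minor variants of the same counting argument). Your remark that closed points of $V$ remain closed in $X$ because finite-type schemes over a field are Jacobson is a nice explicit justification of the shrinking step, which the paper handles implicitly.
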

\begin{proof}
    By \cite[\href{https://stacks.math.columbia.edu/tag/054L}{Tag 054L}]{sp} we may assume, after replacing $X$ with an open subscheme, that $X$ is of finite type and it admits an \'etale morphism $\pi: X \to \mathbb{A}^d_k$ with $d \geq 1$. Let $m$ be the integer obtained by applying \Cref{lemma: finite extension etale morphisms} to $\pi$. Let $V = \pi(X) \subset \mathbb{A}^d_k$ denote the open image of $\pi$. By our construction of $m$, it follows that every $k$-point $v \in V(k)$ has a preimage in $|X|_{\leq m}$. Hence it suffices to show that the cardinality of the set $V(k)$ is at least $|k|$. 
    
    Let $p_1: \mathbb{A}^d_k \to \mathbb{A}^1_k$ denote the projection to the first coordinate. This is a smooth morphism, and hence the image $W:= p_1(V) \subset \mathbb{A}^1_k$ is open. Hence, the complement $\mathbb{A}^1_k \setminus W$ consists of finitely many points. Since $\mathbb{A}^1_k(k) =k$, it follows that $W(k)$ has the same cardinality as $k$. For every $w \in W(k)$, the preimage $p_1^{-1}(w)$ is isomorphic to $\mathbb{A}^{d-1}_k$, and the intersection $V \cap p_1^{-1}(w) \subset \mathbb{A}^{d-1}_k$ is open and nonempty. Since the $k$-points of $\mathbb{A}^{d-1}_k$ are dense, it follows that the set of $k$-points of $V \cap p_1^{-1}(w) \subset \mathbb{A}^{d-1}_k$ must also be dense. Hence, there exists a $k$-point in the preimage $p_1^{-1}(w) \cap V$ for all $w \in W(k)$. This shows that the cardinality of $V(k)$ is at least $|k|$, as desired.
\end{proof}

\subsection{Orbits and stabilizers}Let $G$ be a finite type group scheme over $k$ acting on a finite type scheme $X$. The corresponding action morphism $a: G \times X \to X$ is defined at the level of (scheme-valued) points by $(g,x) \mapsto g \cdot x$. The stabilizer group scheme $\Stab$ is the finite type group scheme over $X$ defined by the Cartesian diagram
     \[
\begin{tikzcd}
 \Stab \ar[r]
  \ar[d] & G \times X \ar[d, "\varphi"] \\ X \ar[r, "\Delta"] &
  X \times X,
\end{tikzcd}
\]
where $\Delta: X \to X \times X$ is the diagonal and $\varphi: G \times X \to X \times X$ is given at the level of points by $\varphi(g, x) = (g\cdot x, x)$.

If $F \supset k$ is a field extension and $x: \Spec(F) \to X$ is an $F$-point of $X$, then we define the stabilizer group $G_x$ to be the fiber
     \[
\begin{tikzcd}
 G_x \ar[r]
  \ar[d] & \Stab \ar[d] \\ \Spec(F) \ar[r, "x"] &
  X
\end{tikzcd}
\]
of $\Stab \to X$ at $x$. Note that the stabilizer $G_x$ is a closed subgroup scheme of $G_F$. We define the orbit morphism $o_x: G_F \to X$ at the level of points by $g \mapsto g \cdot x$. Let $G_F/G_x$ denote the quotient, which is a finite type scheme over $F$. Then the orbit morphism $o_x: G_F \to X$ factors as $o_x: G_F \to G_F/G_x \xrightarrow{\overline{o}_x} X$, where $\overline{o}_x$ is a monomorphism.

We say that two closed points $x,y \in X$ are in the same $G$-orbit if one (or equivalently all) of the following equivalent conditions hold:
\begin{itemize}
    \item $y$ is in the image of the orbit morphism $o_x: G_{k_x} \to X$.
    \item $x$ is in the image of the orbit morphism $o_y$.
    \item The images of $o_x$ and $o_y$ intersect.
\end{itemize}

\begin{lem} \label{lemma: G-orbits vs G(k) orbits}
    Let $X$ be a finite type scheme over $k$ equipped with an action of a finite type group scheme $G$. Let $x,y \in X(k)$ be two $k$-points of $X$. Suppose that $G_x$ is smooth. Then, the following are equivalent:
    \begin{enumerate}
        \item[(i)] The points $x,y$ lie in the same $G$-orbit.
        \item[(ii)] There is some point $g \in G(\overline{k})$ defined over a separable closure $\overline{k} \supset k$ such that $g \cdot x = y$, where we view $x$ and $y$ as $\overline{k}$-points of $X$ by base-extension.
    \end{enumerate}
\end{lem}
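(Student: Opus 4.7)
The easy direction (ii) $\Rightarrow$ (i) is immediate: if $g \in G(\bar k)$ satisfies $g\cdot x = y$, then the morphism $\Spec(\bar k)\to G$ factoring $g$ has image point in $G$ mapping to $y$ under $o_x$, so $y$ lies in the set-theoretic image of $o_x$. For the harder direction (i) $\Rightarrow$ (ii), my plan is to study the scheme-theoretic fiber
\[
    T \;:=\; o_x^{-1}(y) \;=\; G \times_{X,\, y} \Spec(k),
\]
show that it is a smooth, nonempty $k$-scheme, and then extract a $\bar k$-point using the lemmas already proved in this appendix. Any such $\bar k$-point is a morphism $g\colon \Spec(\bar k)\to G$ with $o_x \circ g = y$, i.e.\ $g\cdot x = y$.

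Nonemptiness of $T$ is exactly the hypothesis that $y$ lies in the set-theoretic image of $o_x$. Right translation by $G_x$ on $G$ preserves the fibers of $o_x$, since for $h \in G_x$ one has $o_x(gh) = g\cdot(h\cdot x) = g\cdot x = o_x(g)$; this restricts to a right $G_x$-action on $T$. A direct calculation shows that the natural morphism
\[
T \times_k G_x \;\longrightarrow\; T \times_k T, \qquad (t, h)\longmapsto (t, th),
\]
is an isomorphism, with inverse $(t_1, t_2) \mapsto (t_1, t_1^{-1} t_2)$ (well-defined because $o_x(t_1) = o_x(t_2) = y$ forces $t_1^{-1}t_2 \in G_x$). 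Choosing any geometric point $t_0 \in T(\Omega)$ for some field extension $\Omega \supseteq k$ (which exists since $T$ is nonempty and of finite type over $k$), the assignment $h \mapsto t_0 h$ is an isomorphism $(G_x)_\Omega \xrightarrow{\sim} T_\Omega$ of $\Omega$-schemes. Since $G_x$ is smooth over $k$ by hypothesis, $T_\Omega$ is smooth over $\Omega$, and because smoothness is fpqc-local on the base, $T \to \Spec(k)$ is smooth.

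It remains to produce a $\bar k$-point of $T$. If $\dim T = 0$, then smoothness forces $T \to \Spec(k)$ to be \'etale, hence a finite disjoint union of spectra of finite separable extensions of $k$, which plainly admits a $\bar k$-point. If $\dim T > 0$, then \Cref{lemma: cardinality of points in smooth schemes} applied to $T$ produces many closed points of $T$ with residue field separable over $k$, again yielding a $\bar k$-point. The step I expect to require the most care is the pseudo-torsor identification together with the descent of smoothness along the field extension $k \subseteq \Omega$; once $T$ is known to be smooth, the extraction of a separable-closure point is immediate from the preceding lemmas.
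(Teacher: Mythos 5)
Your proof is correct and takes essentially the same route as the paper: the paper's transporter $\mathrm{Transp}_{x,y}$ is exactly your fiber $T = o_x^{-1}(y)$, and the paper likewise trivializes it over a field extension (by translating $(G_x)_F$ by a chosen $F$-point), deduces smoothness over $k$ by descent along a field extension, and then extracts a point over $\overline{k}$. Your pseudo-torsor phrasing and the appeal to \Cref{lemma: cardinality of points in smooth schemes} in the positive-dimensional case are minor stylistic variants of the paper's citations to SGA3 for representability and to the Stacks Project for the existence of a separable-closure point.
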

\begin{proof}
    The implication (ii) $\Rightarrow$ (i) is clear. For the converse, assume that (i) holds. Consider the subfunctor $\text{Transp}_{x,y} \subset G$ whose $T$-valued points for any $k$-scheme $T$ consist of elements $g \in G(T)$ such that $g \cdot x_T =y_T$. By \cite[VIB, Ex. 6.2.4(e)]{sga3}, we have that $\text{Transp}_{x,y}$ is represented by a scheme of finite type over $k$. Note that (i) is equivalent to $\text{Transp}_{x,y}$ being nonempty. Let $F \supset k$ be a field extension such that there exists an $F$-point $g \in \text{Transp}_{x,y}(F) \subset G(F)$. It follows directly from the definition that the base-change $(\text{Transp}_{x,y})_F \subset G_F$ is the translate $g \cdot (G_x)_F \subset G_F$. In particular, since $(G_x)_F$ is smooth over $F$, it follows that $(\text{Transp}_{x,y})_F$ is smooth over $F$. It then follows by \cite[\href{https://stacks.math.columbia.edu/tag/02V4}{Tag 02V4}]{sp} that $\text{Transp}_{x,y}$ is smooth over $k$. In view of \cite[\href{https://stacks.math.columbia.edu/tag/056U}{Tag 056U}]{sp}, it follows that $\text{Transp}_{x,y}$ has a $\overline{k}$-point, which shows the existence of the required element $g \in G(\overline{k})$. 
\end{proof}

\begin{lem} \label{lemma: infinitely many orbits open}
    Let $X$ be a finite type scheme over $k$ equipped with an action of a finite type group scheme $G$. Suppose that the following hold:
    \begin{enumerate}
        \item[(A)] There is an integer $r \geq 0$ such that for all $x \in X(k)$ we have $\dim(G_x) = r$.
        \item[(B)] The disjoint uniont $\bigsqcup_{x \in X(k)} o_x: \bigsqcup_{x \in X(k)} G \to X$ of orbit morphisms is scheme-theoretically dense. 
        \item[(C)] $X$ contains infinitely many $k$-points lying in distinct $G$-orbits.
    \end{enumerate}
    Then, the following hold:
    \begin{enumerate}
        \item[(i)] $\dim(X) > \dim(G) - r$.
        \item[(ii)] For every nonempty open $G$-stable subscheme $U \subset X$ with $\dim(U) = \dim(X)$ we have that $U$ contains infinitely many $k$-points lying in distinct $G$-orbits.
    \end{enumerate}
\end{lem}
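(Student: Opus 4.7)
My plan is to prove (i) and (ii) both by contradiction, with (i) providing the dimension inequality that drives the argument for (ii). The starting observation for both parts is that assumption (A) forces every orbit $O_x := o_x(G) \subset X$ with $x \in X(k)$ to have dimension exactly $\dim G - r$: the fiber of $o_x$ over any point of $O_x$ is a coset of the stabilizer $G_x$, which has dimension $r$ by (A), and since $G$ is equidimensional as a finite type group scheme over $k$, every irreducible component of $O_x$ has dimension $\dim G - r$. Moreover, orbits of algebraic group actions are locally closed in $X$, so each irreducible component of $O_x$ is locally closed and irreducible.

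For part (i), we have $\dim X \geq \dim G - r$ since $X$ contains at least one orbit. Suppose for contradiction that equality holds. Then every irreducible component of $O_x$ is a locally closed irreducible subset of $X$ of maximal dimension, and hence open-dense in some irreducible component of $X$ of maximal dimension. If two distinct $k$-orbits $O_x \neq O_y$ had components open-dense in the same irreducible component $W$ of $X$, those two components would be nonempty open subsets of the irreducible space $W$ and would intersect, contradicting the disjointness of distinct orbits. This sets up an assignment from the set of distinct $k$-orbits to nonempty subsets of the finitely many maximal-dimensional irreducible components of $X$ with pairwise disjoint images, forcing only finitely many distinct orbits and contradicting (C).

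For part (ii), I argue again by contradiction. Suppose $U$ contains only finitely many $k$-orbits $O_1, \dots, O_n$, and set $V := \overline{O_1} \cup \cdots \cup \overline{O_n} \subset X$. By the orbit-dimension computation and (i), we have $\dim V = \dim G - r < \dim X = \dim U$, so $U \setminus V$ is a nonempty open subscheme of $X$. The key step is to exploit (B): scheme-theoretic density of $\bigsqcup_{x \in X(k)} o_x$ in $X$ implies topological density of the set-theoretic image $\bigcup_{x \in X(k)} O_x$ in $X$, because the scheme-theoretic image of any morphism always contains the set-theoretic image in its underlying topological space. Hence some orbit $O_x$ with $x \in X(k)$ meets $U \setminus V$. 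Since $U$ and its complement $Z := X \setminus U$ are both $G$-stable, $O_x$ is entirely contained in one of them: if $O_x \subset Z$ then $O_x \cap U = \emptyset$; if $O_x \subset U$ then $O_x \in \{O_1, \dots, O_n\}$, so $O_x \subset V$. In either case $O_x \cap (U \setminus V) = \emptyset$, a contradiction.

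The main technical point I would be careful about is the translation of scheme-theoretic density in (B) into the set-theoretic topological density of the union of orbits, since the source $\bigsqcup_{x \in X(k)} G$ is an infinite disjoint union and therefore not quasi-compact. This step still goes through because the scheme-theoretic image of any morphism, by its universal property, contains the set-theoretic image inside its underlying space, so equality at the level of schemes forces the topological closure of that image to be all of $X$. A secondary delicate point in (i) is justifying that an irreducible closed subset of $X$ of maximal dimension is automatically one of the finitely many maximal-dimensional irreducible components of $X$, which follows by embedding it in any containing irreducible component and comparing dimensions.
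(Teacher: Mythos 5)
Your proof of part (i) is correct and closely parallels the paper's (the paper phrases it in terms of images of the quasifinite monomorphisms $\overline{o}_x : G/G_x \to X$, while you phrase it in terms of orbit components being open--dense in irreducible components of $X$; both come down to the same dimension count). For part (ii) you take a genuinely different route: the paper stays entirely scheme-theoretic, restricting $\varphi$ to $U$ and deriving a dimension contradiction from the scheme-theoretic image there, whereas you pass to topology and argue via density of the set-theoretic union of orbits.

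The gap is in the justification of your key step for (ii). You claim that scheme-theoretic density implies topological density of the set-theoretic image \emph{because} the scheme-theoretic image contains the set-theoretic image. That containment is true but does not give the implication: knowing ``set-theoretic image $\subset$ scheme-theoretic image $= X$'' is vacuous, since the set-theoretic image is always contained in $X$. What you actually need is the opposite-flavored statement, that the set-theoretic image is topologically dense \emph{inside} the scheme-theoretic image, and this fails for a general non-quasi-compact morphism: for $\bigsqcup_{n\geq 1}\Spec k[t]/(t^n)\to \mathbb{A}^1_k$ the scheme-theoretic image is all of $\mathbb{A}^1_k$ (the only ideal contained in every $(t^n)$ is $(0)$), yet the set-theoretic image is the single closed point. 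Your conclusion is nevertheless correct, but for a different reason: in the sense of \cite[Tag 01RB]{sp}, ``scheme-theoretically dense'' means that for \emph{every} open $V\subset X$ the scheme-theoretic image of $\varphi|_{\varphi^{-1}(V)}$ is $V$; it is not merely the assertion that the scheme-theoretic image equals $X$. With this definition, applying the condition to a nonempty open $V$ disjoint from $\overline{\bigcup_x O_x}$ gives $\varphi^{-1}(V)=\emptyset$, whose scheme-theoretic image is $\emptyset\neq V$, the contradiction you want. So you should replace the ``contains the set-theoretic image'' reasoning with an explicit appeal to the definition of scheme-theoretic density on all opens; as written, the stated justification is incorrect, and your worry about non-quasi-compactness is exactly the point where the incorrect justification would lead you astray if the definition of density were the weaker one.
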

\begin{proof}
    Fix a set $\Sigma \subset X(k)$ of representatives of $G$-orbits in $X(k)$. By assumption (C), we have that $\Sigma$ is infinite.
    
    We first show (i). One of the finitely-many irreducible components of $X$, say $Y \subset X$, contains an infinite subset $\Xi \subset \Sigma$. If $x \in \Xi$, then the orbit morphism $o_x: G \to X$ factors as $o_x: G \to G/G_x \xrightarrow{\overline{o}_x} X$, where $\overline{o}_x$ is a monomorphism with source the finite type scheme $G/G_x$ of dimension $\dim(G)-r$. Suppose for the sake of contradiction that $\dim(Y) \leq \dim(G)-r$. Then for every $x \in \Xi$ the image of the quasifinite morphism $\overline{o}_x: G/G_x \to X$ must contain a nonempty open subset of $Y$ by dimension reasons. But then, by the irreducibility of $Y$, for any choice of points $x \neq y \in \Xi$, we get that the images of $\overline{o}_x$ and $\overline{o}_y$ intersect, contradicting the fact that $x$ and $y$ were not in the same $G$-orbit.
    
    Let us now prove (ii). Assume for the sake of contradiction that $U$ contains finitely many $G$-orbits of $k$-points. Let $x_1, x_2, \ldots x_m \in U(k)$ be representatives of these orbits, which we may assume to lie in $\Sigma$ without loss of generality. The closed complement $Z := X \setminus U$ contains all other $G$-orbits of $k$-points. It follows that the scheme-theoretic image of $\varphi:= \bigsqcup_{x \in \Sigma} o_x: \bigsqcup_{x \in \Sigma} G \to X$ is contained in the union of $Z$ and the scheme-theoretic image of the finitely many orbits $\bigsqcup_{i=1}^m o_{x_i}: \bigsqcup_{i=1}^m G \to X$. Since $\varphi$ is scheme-theoretically dense by assumption (B), we must have that the induced morphism $\bigsqcup_{i=1}^m o_{x_i}:\bigsqcup_{i=1}^m G \to U$ is scheme-theoretically dense. This is equivalent to the morphism $\bigsqcup_{i=1}^m \overline{o}_{x_i}: \bigsqcup_{i=1}^m G/G_{x_i} \to U$ being scheme-theoretically dense. Since for all $i$ we have $\dim(G/G_{x_i}) = \dim(G)-r < \dim(X) = \dim(U)$, this is impossible.
\end{proof}

\subsection{Curves passing through distinct orbits}

\begin{prop} \label{prop: main prop curves}
    Let $X$ be a scheme of finite type over $k$ equipped with an action of a finite type smooth group scheme $G$ such that the following hold:
    \begin{enumerate}
        \item[(A)] For every field extension $F \supset k$ and every point $x: \Spec(F) \to X$, the stabilizer $G_x$ is smooth over $\Spec(F)$.
        \item[(B)] There are infinitely many distinct $G$-orbits of $k$-points in $X(k)$.
    \end{enumerate}
    Then, there exists a closed curve $C \hookrightarrow X$, a positive integer $m>0$, and a set of closed points $\Sigma' \subset |C|_{\leq m}$ such that:
    \begin{enumerate}
        \item[(i)] $\Sigma'$ has cardinality $|k|$, and so in particular it is infinite.
        \item[(ii)] The points in $\Sigma'$ lie in distinct $G$-orbits.
    \end{enumerate}
\end{prop}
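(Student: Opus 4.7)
The plan is to perform a chain of reductions on $X$ and then produce the curve by an explicit \'etale-local construction around a single $k$-rational point. First, since replacing $G$ by its identity component $G^0$ preserves both hypotheses and the conclusion up to a finite index (identity components of smooth stabilizers are smooth, every $G$-orbit is a finite union of $G^0$-orbits, and $|k|$ distinct $G^0$-orbits yield $|k|$ distinct $G$-orbits since $[G:G^0]$ is finite), we may assume $G$ is connected. By upper semi-continuity of the dimension of smooth stabilizers, $X$ stratifies into $G$-stable locally closed pieces of constant stabilizer dimension; by pigeonhole on (B), some stratum $X_r$ carries infinitely many orbits, and we replace $X$ by it, so that $\dim G_x = r$ everywhere. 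Next, choosing orbit representatives $\Sigma \subset X(k)$ and replacing $X$ by the scheme-theoretic image of $\bigsqcup_{x\in\Sigma} G \to X$ ensures assumption (B) of \Cref{lemma: infinitely many orbits open}, which then gives $\dim X > e := \dim G - r$. Since $\bigsqcup_{x\in\Sigma} G$ is geometrically reduced ($G$ is smooth), \Cref{lemma: density of points and smoothness} yields a smooth dense open of $X$; the $G$-stable smooth locus retains this property, and \Cref{lemma: infinitely many orbits open}(ii) shows it still hosts infinitely many orbits. Finally, restricting to an irreducible component of maximal dimension (which is $G$-stable as $G$ is connected and still contains infinitely many orbits by another application of \Cref{lemma: infinitely many orbits open}(ii)), we assume $X$ is smooth, irreducible, and $G$-stable.

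The crucial intermediate observation is a uniqueness statement: for any $x_0 \in X(k)$, the only $G$-orbit $O$ whose closure $\bar O \subset X$ contains $x_0$ is $O = O_{x_0}$. This follows from $G$-stability of $\bar O$ (which forces $O_{x_0} \subset \bar O$), the constant stabilizer dimension (so $O$ and $O_{x_0}$ are both of dimension $e$), and the connectedness of $G$ (so $\bar O$ and $\bar O_{x_0}$ are irreducible of the same dimension $e$); hence $\bar O = \bar O_{x_0}$, and then $O$ and $O_{x_0}$ are overlapping non-empty open subsets of this irreducible scheme, so $O = O_{x_0}$. Now pick any $x_0 \in X(k)$ and an affine open $W \subset X$ containing $x_0$ with an \'etale morphism $\pi\colon W \to \mathbb{A}^d$, where $d = \dim X$; such $W$ exists by smoothness. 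The image $\pi(\bar O_{x_0} \cap W)$ is constructible of dimension $\leq e < d$, so the collection of directions $v \in \mathbb{P}^{d-1}$ for which the affine line $L_v := \pi(x_0) + k\cdot v$ is contained in $\overline{\pi(\bar O_{x_0}\cap W)}$ forms a proper closed subvariety of $\mathbb{P}^{d-1}$. Since $k$ is infinite, choose $v \in \mathbb{P}^{d-1}(k)$ outside this subvariety, set $L := L_v$, let $C_0$ be the irreducible component of $\pi^{-1}(L)$ containing $x_0$ (a smooth $1$-dimensional \'etale cover of an open subset of $L$), and let $C$ be the Zariski closure of $C_0$ in $X$.

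Then $C$ is a closed $1$-dimensional subscheme of $X$, and by the uniqueness statement together with the choice of $L$, $C \not\subset \bar O$ for any $G$-orbit $O$: otherwise $x_0 \in \bar O$ would force $O = O_{x_0}$, and then $C_0 \subset \bar O_{x_0}$, contradicting $L \not\subset \overline{\pi(\bar O_{x_0} \cap W)}$. By \Cref{lemma: finite extension etale morphisms} applied to $\pi$, some integer $m$ uniformly bounds residue field degrees along $\pi$; since $L \cong \mathbb{A}^1_k$ has $|k|$ many $k$-rational closed points each lifting to a non-empty fiber in $C_0$ of residue field degree $\leq m$ over $k$, we obtain $|C|_{\leq m} \geq |C_0|_{\leq m} \geq |k|$. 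Meanwhile, for each $G$-orbit $O$, the intersection $C \cap \bar O$ is a proper closed subscheme of the irreducible $1$-dimensional $C$ and hence finite. The partition of $|C|_{\leq m}$ by $G$-orbit thus has only finite classes, and cardinality forces at least $|k|$ distinct orbits; picking one representative per orbit produces the desired $\Sigma'$. The main obstacle is the uniqueness statement in the second paragraph: it consolidates all the reductions into a condition on a \emph{single} orbit closure through $x_0$, reducing the construction of the curve to avoiding one proper closed subvariety of $\mathbb{P}^{d-1}$, which is straightforward over any infinite $k$. Without such a reduction, one would face the more delicate problem of finding a single curve over $k$ that simultaneously avoids every one of the (potentially countably many) orbit closures, which is genuinely awkward when $k$ is countable.
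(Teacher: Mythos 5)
Your proof is correct in substance and takes a genuinely different route from the paper's. Both begin with the same chain of reductions (constant stabilizer dimension via upper-semicontinuity, scheme-theoretic image of the orbit maps, passage to the smooth locus, and an application of \Cref{lemma: infinitely many orbits open} to keep infinitely many orbits and guarantee $\dim X > e = \dim G - r$). After that the strategies diverge. The paper continues by enforcing flatness of $\Stab \to X$, forms the quotient stack $X/G$, realizes it as a gerbe over an algebraic space $M$, shrinks so that $M$ is a smooth affine irreducible scheme, cuts a smooth curve $Z^{\circ}$ in $M$ through a $k$-point via $(\dim M - 1)$ generic linear equations, and then lifts to $X$ by choosing a closed point of the smooth fiber over the generic point of $Z^{\circ}$. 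You instead stay entirely inside $X$: after additionally reducing to $G$ connected so that irreducible components are $G$-stable, you prove the key auxiliary observation that all orbit closures through a fixed $x_0 \in X(k)$ coincide with $\overline{O}_{x_0}$. With that in hand, only the single $e$-dimensional closed set $\overline{O}_{x_0}$ needs to be avoided, so you choose an \'etale chart $\pi\colon W \to \mathbb{A}^d$ around $x_0$ and a $k$-rational line $L$ through $\pi(x_0)$ that is not contained in $\overline{\pi(\overline{O}_{x_0}\cap W)}$; this is an avoidance problem for a single proper closed subvariety of $\mathbb{P}^{d-1}$, which any infinite field handles. The cardinality argument at the end is clean: for each orbit $O$ of a closed point, the proposal shows $C \not\subset \overline{O}$ (either $x_0\notin\overline{O}$ or $\overline{O}=\overline{O}_{x_0}$, and $C\not\subset\overline{O}_{x_0}$ by construction), so $C \cap \overline{O}$ is finite and the $|k|$ points of $|C|_{\leq m}$ split into $|k|$ orbit classes. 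Your approach buys the removal of the stack-theoretic machinery (quotient stacks, gerbes, coarse moduli algebraic spaces, generic flatness of $\Stab$) at the price of the extra reduction to $G$ connected and the uniqueness observation; the paper's approach is more conceptual and in particular makes the quotient geometry transparent. One small wording fix: in the proof of the uniqueness statement, $O_x$ for a closed point $x$ with nontrivial residue extension is constructible and dense in $\overline{O}_x$ but not obviously open; since $O_{x_0}$ is open dense in the common irreducible closure, $O_x\cap O_{x_0}\neq\emptyset$ follows already from density of $O_x$, so the conclusion stands, but you should replace ``overlapping non-empty open subsets'' by ``a dense constructible subset meeting a dense open subset.''
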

\begin{proof}
    Let $\Sigma$ denote a set of representatives of $G$-orbits in $X(k)$. By assumption, $\Sigma$ is infinite. By upper-semicontinuity of fiber dimension for group schemes \cite[VIB, Prop. 4.1]{sga3}, given an integer $r$ the subset $X_r \subset X$ of points whose stabilizer has dimension $r$ is locally closed inside $X$. Since $X$ is of finite type, there must be finitely many $r$ such that $X_r$ is nonempty. Hence, there exists a fixed integer $r$ such that $X_r$ contains an infinite subset of the $k$-points in $\Sigma$. If we equip $X_r$ with the reduced subscheme structure, then the action of the geometrically reduced group scheme $G$ preserves $X_r$. After replacing $X$ with $X_r$, we may assume without loss of generality that all stabilizers of points in $X$ have the same dimension. 
    
    Let $\varphi:= \bigsqcup_{x \in \Sigma} o_x: \bigsqcup_{x \in \Sigma} G \to X$ denote the disjoint union of orbit morphisms for $x \in \Sigma$. The scheme-theoretic image of $\varphi$ is a reduced closed subscheme $Y \subset X$ that is stable under the action of the geometrically reduced group scheme $G$. Replacing $X$ with $Y$, we may assume that $\varphi$ has scheme-theoretic image $X$. Let $X^{sm} \subset X$ be the maximal smooth open subscheme of $X$. Note that $X^{sm}$ is $G$-stable. Since the morphism $\varphi$ is scheme-theoretically dense, it follows by \Cref{lemma: density of points and smoothness} that $X^{sm}$ is open and dense. By \Cref{lemma: infinitely many orbits open}, $X^{sm}$ contains infinitely many distinct $G$-orbits of $k$-points. Hence, after replacing $X$ with $X^{sm}$, we may assume without loss of generality that $X$ is smooth. 
    
    Furthermore, if we denote by $U \subset X$ the maximal open subscheme $U \subset X$ such that the restriction $\Stab|_{U} \to U$ of the stabilizer group scheme is flat, then $U$ is dense by generic flatness \cite[\href{https://stacks.math.columbia.edu/tag/052B}{Tag 052B}]{sp}. Since $U$ is $G$-stable, after applying \Cref{lemma: infinitely many orbits open} we may replace $X$ with $U$ and assume that $\Stab \to X$ is flat. By our assumption (A), the fibers of $\Stab \to X$ are smooth, and hence we conclude that $\Stab \to X$ is smooth. 
    
    By \cite[\href{https://stacks.math.columbia.edu/tag/06QJ}{Tag 06QJ}]{sp}, the quotient stack $X/G$ is a gerbe over an algebraic space $M$, which is quasiseparated since $G$ is of finite type. Let us denote by $\pi: X \to X/G \to M$ the induced morphism. Since $G$ is smooth, so is the quotient morphism $X \to X/G$. On the other hand, since $\Stab \to X$ is smooth, it follows that the gerbe $X/G \to M$ is also smooth. Hence $\pi:X \to M$ is smooth. By \cite[\href{https://stacks.math.columbia.edu/tag/06NH}{Tag 06NH}]{sp}, there is an open affine dense subscheme $U \subset M$. The preimage $\pi^{-1}(U) \subset X$ is a $G$-stable open dense subscheme of $X$, and hence by using \Cref{lemma: infinitely many orbits open} once again, we may replace $X$ with $\pi^{-1}(U)$ and assume that the coarse quotient $M$ is an affine scheme of finite type. Since $X$ is smooth and the morphism $\pi: X \to M$ is flat, of finite presentation and surjective, it follows that $M$ is smooth. After passing to a connected component of $M$ of maximal dimension and applying \Cref{lemma: infinitely many orbits open} again, we may assume that $M$ is irreducible. 
    
    Note that two $k$-points $x,y \in X(k)$ are in the same $G$-orbit if and only if their images $\pi(x)$ and $\pi(y)$ are distinct points of $M$. In particular, the $k$-points in the subset $\pi(\Sigma) \subset M(k)$ are distinct, and we must have $\dim(M)>0$. Choose a $k$-point $m \in M(k)$ with maximal ideal $\mathfrak{m}_m \subset \cO_M$. By smoothness, the tangent space $\mathfrak{m}_m/\mathfrak{m}_m^2$ is a $k$-vector space of dimension $d:= \dim(M)>0$. Choose $(d-1)$-many functions $f_1,\ldots, f_{d-1} \in \cO_M$ that vanish at $m$ and whose corresponding images in $\mathfrak{m}_m/\mathfrak{m}_m^2$ span a $(d-1)$-dimensional vector subspace. Then, by the Jacobian criterion for smoothness, if we denote by $Z = V(f_1, \ldots, f_{d-1}) \subset M$ the vanishing locus of these functions, then there is an open neighborhood $U \subset M$ of $m$ such that the intersection $Z^{\circ}:= Z \cap U$ is a smooth irreducible curve. Let $\eta_{Z^{\circ}}$ denote the generic point of $Z^{\circ}$, and consider the fiber $X_{\eta_{Z^{\circ}}}$ of $\eta_{Z^{\circ}}$ under $\pi: X \to M$. Since $\pi$ is smooth, we have that $X_{\eta_{Z^{\circ}}}$ is smooth over $\eta_{Z^{\circ}}$. Hence, by \cite[\href{https://stacks.math.columbia.edu/tag/056U}{Tag 056U}]{sp}, it contains a closed point $\eta \in X_{\eta_{Z^{\circ}}}$ whose residue field is a finite separable extension of the residue field of $\eta_{Z^{\circ}}$. It follows that the residue field of $\eta$ has transcendence degree $1$ over $k$, and the closure $C:= \overline{\eta} \subset X$ is a curve that dominates the closure $\overline{Z^{\circ}} \subset M$. To conclude the proof, we show that there is an integer $m>0$ and a subset $\Sigma \subset |C|_{\leq m}$ satisfying properties (i) and (ii) in the statement of the proposition.
    
    Let $\psi: C \to \overline{Z^{\circ}}$ denote the morphism obtained by restricting $\pi$ to $C$. Since the morphism $\psi$ of integral curves induces a separable extension of fraction fields by construction, there is some nonempty open $V \subset \overline{Z^{\circ}}$ such that the restriction $\psi_V: \psi^{-1}(V) \to V$ is \'etale. After further shrinking $V$, we may assume that $V \subset Z^{\circ}$ and that $\psi_V$ is surjective. Since $V$ is positive-dimensional and smooth, it follows from \Cref{lemma: cardinality of points in smooth schemes} that there is some positive integer $\ell>0$ such that cardinality of $|V|_{\leq m}$ is at least $|k|$. Let $q>0$ denote the positive integer obtained by applying \Cref{lemma: finite extension etale morphisms} to the \'etale surjective morphism $\psi|_V: \psi^{-1}(V) \to V$. For any $v \in |V|_{\leq \ell}$, it follows from \Cref{lemma: finite extension etale morphisms} that there is a point $\widetilde{v} \in |C|_{\leq q \cdot \ell}$ contained in the preimage $(\pi_V)^{-1}(v)$. As we range over $v \in |V|_{\leq \ell}$, we obtain a subset $\Sigma' \subset |C|_{\leq \ell \cdot q}$ of cardinality $|k|$ consisting of points which lie in distinct $G$-orbits, as desired. 
\end{proof}

\subsection{Equivariant decomposition of constructible subsets}

\begin{lem} \label{lemma: equivariant constructible subsets}
    Let $X$ be a finite type scheme over $k$ equipped with the action of a connected smooth affine group $G$. Let $H \subset |X|$ be a constructible subset of the underlying topological space $|X|$ of $X$ which is preserved by the action of $G$. Then, there exist a finite set $X_1, X_2, \ldots, X_{\ell} \subset X$ of pairwise disjoint $G$-stable locally closed subschemes such that $H = \bigsqcup_{i=1}^{\ell} |X_i|$.
\end{lem}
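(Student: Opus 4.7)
I would proceed by Noetherian induction on the closed subscheme $\overline{H} \subseteq X$. The key observations that make this work are (a) $\overline{H}$ is $G$-stable, since the closure of a $G$-stable subset of a scheme is again $G$-stable, so one may replace $X$ by $\overline{H}$ at each inductive step; and (b) if one can produce a single nonempty $G$-stable \emph{open} subscheme $V$ of $X$ with $|V| \subseteq H$ and $V$ dense in $X$, then $H \setminus |V|$ is again constructible and $G$-stable, and its closure is a proper closed $G$-stable subscheme of $X$, to which the inductive hypothesis applies.

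\textbf{Producing $V$.} After replacing $X$ by $\overline{H}$, I would take $V$ to be the union of all open subschemes of $X$ contained in $H$, i.e., the maximal such open. For any irreducible component $X_i$ of $X_{\mathrm{red}}$, the stratum $X_i^{\circ} := X_i \setminus \bigcup_{j \neq i} X_j$ is an irreducible open of $X$ in which $H \cap X_i^{\circ}$ is a dense constructible subset; a standard argument (a constructible set of an irreducible scheme containing the generic point must contain a nonempty open) shows $H \cap X_i^{\circ}$ contains a nonempty open of $X_i^{\circ}$, which is in turn an open of $X$ contained in $H$, hence contained in $V$. Thus $V$ contains a nonempty open of each $X_i$ and is therefore dense in $X$.

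\textbf{$G$-stability, the main obstacle.} The delicate step is showing that $V$ is $G$-stable as a subscheme, i.e., $a^{-1}(V) = p_2^{-1}(V)$ in $G \times X$, where $a$ is the action morphism and $p_2$ the second projection. The $G(k)$-invariance of $V$ follows from maximality, but scheme-theoretic invariance needs more. I would argue via the saturation $V' := p_2(a^{-1}(V))$: since $G$ is smooth over $k$, the projection $p_2$ is smooth and hence open, so $V'$ is open in $X$. From $V \subseteq H$ and the $G$-stability of $H$ (which by definition is the equality $a^{-1}(H) = p_2^{-1}(H)$ of subsets of $|G \times X|$) one checks that $V' \subseteq H$; by maximality of $V$ this forces $V' \subseteq V$, and the reverse inclusion $V \subseteq V'$ is immediate from the identity section. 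Hence $V = V'$, which says that $V$ equals its $G$-saturation and is therefore a union of $G$-orbits, equivalently $V$ is $G$-stable. Feeding $V$ into the Noetherian induction described above produces the required disjoint decomposition of $H$.
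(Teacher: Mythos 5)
Your proof is correct, and it takes a genuinely different (and arguably cleaner) route than the paper's.

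The paper proceeds by induction on the lexicographically ordered pair $(m,h)$, where $h$ is the maximal dimension of the closure of a point of $H$ and $m$ is the number of points achieving it. It picks a single such point $\eta$, sets $Z := \overline{\eta}$, and establishes $G$-stability of $Z$ by examining the scheme-theoretic image $\widetilde Z$ of $G \times Z \to X$: connectedness of $G$ makes $\widetilde Z$ irreducible, density arguments force its generic point into $H$, and maximality of $h$ forces $\widetilde Z = Z$. The maximal open $Z^{\circ} \subseteq Z \cap H$ is then shown to be $G$-stable only indirectly, via a second application of the inductive hypothesis to the smaller constructible set $Z \setminus H$, realizing $Z \setminus |Z^{\circ}|$ as $\overline{Z \setminus H}$. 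Your version replaces this double induction by a single Noetherian induction on $\overline H$, strips off all top-dimensional strata at once by taking the maximal open $V$ of $\overline H$ contained in $H$, and establishes $G$-stability of $V$ directly with the saturation argument $V' := p_2(a^{-1}(V))$ — openness of $p_2$ (smoothness of $G$) makes $V'$ open, $G$-stability of $H$ gives $V' \subseteq H$, and maximality closes the loop. This avoids the scheme-theoretic-image machinery and the nested induction. Two points are compressed but standard and worth spelling out in a polished write-up: (1) that $\overline H$ (with reduced structure) is $G$-stable requires that $a$ and $p_2$ are open and generalizing (smoothness of $G$) so that $f^{-1}(\overline S) = \overline{f^{-1}(S)}$, together with $G$ reduced to preserve the reduced structure; and (2) the step ``$V = V'$, equivalently $V$ is $G$-stable'' needs the involution $(g,x)\mapsto(g^{-1},gx)$ of $G\times X$, which swaps $a$ and $p_2$ and turns the one-sided inclusion $a^{-1}(V)\subseteq p_2^{-1}(V)$ into an equality. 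Notably, your argument does not use connectedness of $G$ anywhere, whereas the paper's scheme-theoretic-image argument invokes it to get irreducibility of $G \times Z$; so your proof is slightly more general on that axis. Conversely, the paper's per-component strategy reproves $G$-stability of the top-dimensional irreducible closure as a byproduct, which is occasionally useful; your argument only certifies $G$-stability of the reduced $\overline H$ as a whole.
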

\begin{proof}
    To any such constructible subset $H$ we can assign a pair $(m,h)$ of nonnegative integers, where $h$ is the maximal dimension of closures $\overline{\eta} \subset X$ of points $\eta \in H$, and $m$ is the number of points of $H$ whose closure has dimension $h$. We argue by induction on the set of pairs $(m,h)$ equipped with lexicographic order.
    
    Let $\eta$ be a point of $H$ such that $Z:= \overline{\eta} \subset X$ has maximal dimension $h$. Then $H \cap |Z|$ is a dense constructible subset of the variety $Z$, and hence it contains a unique maximal open subset $|Z^{\circ}| \subset H \cap |Z|$. We denote by $Z^{\circ} \hookrightarrow X$ the locally closed subvariety of $X$ with underlying topological space $|Z^{\circ}|$ equipped with the reduced subscheme structure. Since the difference $|H| \setminus |Z^{\circ}|$ is a constructible set whose associated pair $(m,h)$ strictly smaller in lexicographic order, by induction it is enough to show that $Z^{\circ}$ is preserved by the action of $G$.

    Consider the action morphism $a_Z: G \times Z \to X$ given by $(g,z)\mapsto g \cdot z$, and let $\widetilde{Z}$ denote the scheme theoretic image of $a_Z$. Since $G \times Z$ is irreducible, it follows that $\widetilde{Z}$ is irreducible. Let $\widetilde{\eta}$ denote the generic point of $\widetilde{Z}$. By construction, $\widetilde{Z} \supset Z$, and hence $\widetilde{\eta}$ is a generalization of $\eta$. Note that, since $Z^{\circ} \subset Z$ is dense, the (constructible) image of $a_{Z^{\circ}}: G \times Z^{\circ} \to \widetilde{Z}$ is dense. Since $H$ is preserved by the action of $G$ and $Z^{\circ} \subset H$, the image of $a_{Z^{\circ}}$ is contained in $H$. We conclude that the constructible set $H \cap |\widetilde{Z}|$ is dense in $\widetilde{Z}$, and hence we must have that $\widetilde{\eta} \in H$. By maximality of $\dim(Z)  = \dim(\overline{\eta})$, this forces $\eta = \widetilde{\eta}$ and $\widetilde{Z} = Z$. We conclude that $Z= \widetilde{Z}$ is preserved by the action of $G$. Consider $Z \setminus H$, which is a constructible set preserved by the action of $G$. By induction, we may write $Z \setminus H = \sqcup_{i=1}^{\ell} |X_i|$ where $X_i \hookrightarrow Z$ are locally closed subschemes preserved by $G$. Each closure $\overline{X}_i$ is also preserved by the action of $G$. Note that open subset $|Z^{\circ}| \subset Z$ is the complement $|Z| \setminus (\bigcup_{i=1}^{\ell}|\overline{X}_i|)$ and hence it is also preserved by the action of $G$, as desired.
\end{proof}

\subsection{Orbits and stabilizers of schemes of representations}
Let $A$ be a finitely generated algebra over $k$, and fix a positive integer $n$. We denote by $\vMod(A,n)$ the affine scheme of finite type that parameterizes actions of $A$ on the standard $k$-vector space $k^{\oplus n}$ of dimension $n$. The group scheme $G:= GL_n$ acts on $\vMod(A,n)$ by changing the basis of the standard vector space.
\begin{lem} \label{prop: smooth stabilizers for representation scheme}
    For all field extensions $F \supset k$ and all points $x: \Spec(F) \to \vMod(A,n)$, the stabilizer $G_x$ is smooth over $F$.
\end{lem}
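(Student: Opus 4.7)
The plan is to identify the stabilizer group scheme $G_x$ with an open subscheme of affine space, from which smoothness is immediate.

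Let $M$ denote the $F$-vector space $F^{\oplus n}$ equipped with the $A$-module structure classified by the $F$-point $x$. Unwinding the definition of the action of $GL_n$ on $\vMod(A,n)$, for every $F$-algebra $R$ the $R$-points of $G_x$ are precisely the $R$-linear automorphisms $g$ of $M\otimes_F R$ that commute with the $A$-action (extended $R$-linearly). Since $M$ is finite-dimensional over $F$, forming $\End_A(-)$ commutes with the flat base change $F\to R$, giving
\[
G_x(R)\;\cong\;\bigl(\End_A(M)\otimes_F R\bigr)^{\!*}.
\]
In other words, $G_x$ is the group scheme of units $GL_1(B)$ attached to the finite-dimensional associative $F$-algebra $B:=\End_A(M)$.

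Now the underlying affine $F$-scheme of $B$ (that is, $\Spec(\Sym_F B^{\vee})$) is an affine space, hence smooth over $F$. Left multiplication gives an embedding $B\hookrightarrow \End_F(B)$, and an element $b\in B\otimes_F R$ is a unit if and only if left multiplication by $b$ on $B\otimes_F R$ is invertible, that is, if and only if the polynomial function $\det(L_b)$ on $B$ is invertible in $R$. Thus $GL_1(B)$ is the open subscheme of $B$ where $\det(L_{(-)})$ does not vanish, and open subschemes of smooth schemes are smooth. Hence $G_x$ is smooth over $F$.

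The only non-trivial input is the functorial identification of $G_x$ with $GL_1(\End_A(M))$ together with the base-change compatibility $\End_{A\otimes_F R}(M\otimes_F R)\cong \End_A(M)\otimes_F R$, both of which are routine consequences of the finite-dimensionality of $M$ over $F$; I do not expect any further obstacle.
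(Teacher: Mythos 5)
Your proof is correct and follows essentially the same route as the paper's: both identify $G_x$ as an open subscheme of the affine space underlying $\End_A(M)$ (the paper via the Cartesian square pulling back $GL_n \hookrightarrow \mathbb{A}^{n^2}$ along $\End \hookrightarrow \mathbb{A}^{n^2}$, you via the equivalent description as the unit group scheme $GL_1(B)$ of the finite-dimensional $F$-algebra $B=\End_A(M)$), and both conclude smoothness from openness in affine space. The base-change identification $\End_{A\otimes_F R}(M\otimes_F R)\cong \End_A(M)\otimes_F R$ you invoke is the same ingredient the paper uses implicitly to justify its Cartesian diagram.
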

\begin{proof}
    The point $x$ corresponds to an action of the $F$-algebra $A \otimes_k F$ on the $F$-vector space $F^{\oplus n}$. The stabilizer $G_x$ is a closed subgroup scheme of $(GL_n)_F$, which can be described at the level of scheme-valued points as follows. For every $F$-algebra $R$, the group of $R$-points $G_x(R)$ is the subgroup of $GL_n(R)$ consisting of automorphisms $g \in GL_n(R)$ of the $R$-module $R^{\oplus n}$ that are compatible with the induced action of $A\otimes_k R$ on $R^{\oplus n}$. In other words, $G_x(R)$ is the group of automorphisms of the $R$-representation $R^{\oplus n}$ of the $R$-algebra $A \otimes_k R$.
    
    Let $\End \subset M_{n\times n}(F) = F^{n^2}$ denote the $F$-vector subspace of endomorphisms of $F^{\oplus n}$ that are compatible with the action of $A \otimes_k F$. We may think of $\End$ as an affine space $\mathbb{A}^N_F$ over $F$, equipped with a closed immersion of affine schemes $\End \hookrightarrow \mathbb{A}^{n^2}_F$. Then, our description of the functor of points of $G_x$ above shows that there is a Cartesian diagram of $F$-schemes      
    \[
\begin{tikzcd}
 G_x \ar[r]
  \ar[d] & \End \ar[d] \\ (GL_n)_F \ar[r] &
  \mathbb{A}^{n^2}_F,
\end{tikzcd}
\]
where the bottom horizontal arrow is the standard open immersion of $(GL_n)_F$ into the affine space $\mathbb{A}^{n^2}_F$ of matrices. It follows that $G_x$ is an open subscheme of the affine space $\End \cong \mathbb{A}^N_F$, and hence it is smooth over $F$.
    
\end{proof}

\begin{lem} \label{lemma: characterization of GL_n orbits}
    Let $x,y \in |\vMod(A,n)|$ be two closed points with residue fields $k_x,k_y$ which are separable over $k$. Let $M_x$ (resp. $M_y$) denote the $n$-dimensional representation of $A\otimes_k k_x$ (resp. $A \otimes_k k_y$) corresponding to the point $x$ (resp. $y$). Fix the choice of a separable closure $\overline{k} \supset k$ containing both $k_x$ and $k_y$. Then, the following are equivalent:
    \begin{enumerate}
        \item[(i)] The points $x,y$ are the in the same $GL_n$-orbit.
        \item[(ii)] $M_x \otimes_{k_x} \overline{k}$ and $M_y \otimes_{k_y} \overline{k}$ are isomorphic as representations of $A \otimes_k \overline{k}$.
    \end{enumerate}
\end{lem}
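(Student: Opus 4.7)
The plan is to establish the two implications separately. The reverse direction (ii) $\Rightarrow$ (i) is essentially a tautology once we unwind the functor of points of $\vMod(A,n)$: any isomorphism $\varphi \colon M_x \otimes_{k_x} \overline{k} \xrightarrow{\sim} M_y \otimes_{k_y} \overline{k}$ of $A \otimes_k \overline{k}$-representations on $\overline{k}^{\oplus n}$ corresponds to an element $g \in GL_n(\overline{k})$ satisfying $g \cdot \overline{x} = \overline{y}$, where $\overline{x}, \overline{y} \in \vMod(A,n)(\overline{k})$ are the $\overline{k}$-points obtained from $x,y$ by base-change along the chosen embeddings $k_x, k_y \hookrightarrow \overline{k}$. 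Combining $g$ with the embedding $k_x \hookrightarrow \overline{k}$, we obtain a $\overline{k}$-valued point of $G_{k_x} = GL_n \times \Spec(k_x)$ whose image under the orbit morphism $o_x \colon G_{k_x} \to \vMod(A,n)$ is the point $\overline{y}$. Since $\overline{y}$ factors topologically through $y$, the closed point $y$ lies in the image of $o_x$, and hence $x$ and $y$ lie in the same $G$-orbit.

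For the forward direction (i) $\Rightarrow$ (ii), I would mimic the transporter argument in the proof of \Cref{lemma: G-orbits vs G(k) orbits}, now carried out over a suitable finite separable extension of $k$ rather than over $k$ itself. Let $L \subseteq \overline{k}$ denote the compositum of $k_x$ and $k_y$; this is a finite separable extension of $k$ containing both residue fields, and the base-changes $x_L, y_L$ are $L$-valued points of $\vMod(A,n)$. By \cite[VIB, Ex. 6.2.4(e)]{sga3} the transporter functor $\text{Transp}_{x_L, y_L} \subseteq G_L$, sending an $L$-scheme $T$ to the set $\{g \in G_L(T) : g \cdot (x_L)_T = (y_L)_T\}$, is represented by a scheme of finite type over $L$. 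Since $x$ and $y$ lie in the same $G$-orbit, there is some field extension $F \supseteq L$ and some $g \in G_L(F)$ with $g \cdot (x_L)_F = (y_L)_F$, so $\text{Transp}_{x_L, y_L}$ is nonempty; it is then a left torsor under the base-changed stabilizer $(G_x)_L$. By \Cref{prop: smooth stabilizers for representation scheme} the stabilizer $G_x$ is smooth over $k_x$, hence $(G_x)_L$ is smooth over $L$ by base-change, and so the torsor $\text{Transp}_{x_L, y_L}$ is smooth over $L$. Since $\overline{k}$ is a separable closure of $L$ as well, \cite[\href{https://stacks.math.columbia.edu/tag/056U}{Tag 056U}]{sp} yields a $\overline{k}$-point of $\text{Transp}_{x_L, y_L}$, i.e.\ an element $g \in GL_n(\overline{k})$ with $g \cdot \overline{x} = \overline{y}$. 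Unwinding the functor of points of $\vMod(A,n)$, this $g$ provides the desired isomorphism $M_x \otimes_{k_x} \overline{k} \cong M_y \otimes_{k_y} \overline{k}$ as $A \otimes_k \overline{k}$-modules.

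The only substantive step is the use of smoothness of the stabilizer, via \Cref{prop: smooth stabilizers for representation scheme}, to upgrade the mere existence of a transporter over some overfield to the existence of one over the prescribed separable closure $\overline{k}$; everything else amounts to routine bookkeeping with the functor of points of the scheme of representations and the $GL_n$-action. No new geometric input beyond what already appears in the proof of \Cref{lemma: G-orbits vs G(k) orbits} is required.
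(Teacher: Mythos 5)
Your proof takes essentially the same approach as the paper's: the paper base-changes everything to $\overline{k}$ and then invokes \Cref{lemma: G-orbits vs G(k) orbits} with $\overline{k}$ as the ground field, whereas you rerun the transporter argument from the proof of that lemma directly over the compositum $L \subseteq \overline{k}$ of the two residue fields. The decisive ingredients are identical in both versions -- representability of the transporter, smoothness of the stabilizer via \Cref{prop: smooth stabilizers for representation scheme}, and the existence of a point of a nonempty smooth finite type scheme over a separable closure -- so the difference is only one of bookkeeping.
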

\begin{proof}
    By definition, the points $x,y$ lie in the same orbit if and only if the corresponding $\overline{k}$-points $\widetilde{x}, \widetilde{y} \in \vMod(A,n)(\overline{k})$ obtained by base-extension lie in the same $GL_n$-orbit. We may equivalently think of $\widetilde{x}$ and $\widetilde{y}$ as $\overline{k}$-points of the base-change $\vMod(A,n)_{\overline{k}} \cong \vMod(A\otimes_k \overline{k}, n)$ equipped with its action of $(GL_n)_{\overline{k}}$. The points $\widetilde{x}$ and $\widetilde{y}$ correspond to the representations $M_x \otimes_{k_x} \overline{k}$ and $M_y \otimes_{k_y} \overline{k}$ of $A \otimes_k \overline{k}$ equipped with the base-changed basis as $\overline{k}$-vector spaces inherited from $x$ and $y$. Since the stabilizer of every point in $\vMod(A\otimes_k \overline{k}, n)$ is smooth by \Cref{prop: smooth stabilizers for representation scheme}, an application of \Cref{lemma: G-orbits vs G(k) orbits} shows that $\widetilde{x}$ and $\widetilde{y}$ lie in the same orbit if and only if there is a matrix $g \in GL_n(\overline{k})$ such that $g \cdot \widetilde{x} = \widetilde{y}$. This is equivalent to $M_x \otimes_{k_x} \overline{k}$ and $M_y \otimes_{k_y} \overline{k}$ being isomorphic as representations of $A \otimes_k \overline{k}$, as desired.
\end{proof}

\begin{lem} \label{lemma: sep extension} 
Let $F \supset k$ be an algebraic field extension.
\begin{itemize}
    \item[(i)] If $M \otimes_k F \cong N \otimes_k F$ as $A\otimes_k F$-modules for finite dimensional $A$-modules $M$ and $N$, then $M \cong N$.
    \item[(ii)] Suppose further that $F \supset k$ is separable and of finite degree. Then every $A\otimes_k F$-module $M$ is isomorphic to a direct summand of $r(M) \otimes_k F$, where $r(M)$ equals $M$ but viewed as an $A$-module via $A\to A\otimes_k F$.
\end{itemize}

\begin{proof} (i) If we choose $F^{\oplus n}$ for the underlying vector space of $M \otimes_k F$ as well as $N \otimes_k F$ for $n\in \mathbb{N}$, then the isomorphism $M \otimes_k F\cong N \otimes_k F$ corresponds to an $n\times n$ matrix $X$ over $F$. Let $L/k$ be the field extension generated by the entries of $X$. Because $F/k$ is an algebraic extension, the degree $[L:k]$ is finite. Now $X$ induces an isomorphism $M \otimes_k L \cong N\otimes_k L$ of $A\otimes_k L$-modules. Restriction of scalars along $A \to  A\otimes_k L$ yields an isomorphism $M^{\oplus [L:k]} \cong N^{\oplus [L:k]}$ of $A$-modules. Counting the multiplicities of indecomposable direct summands in a decomposition of $M^{\oplus [L:k]}$ as well as $N^{\oplus [L:k]}$, it follows that $M\cong N$.

(ii) Since $F\supset k $ is separable and of finite degree, the map $F \otimes_k F \to F, x\otimes y \mapsto xy$ is a split epimorphism of $F$-$F$-bimodules. Applying $M\otimes_F (-)$, it follows that $r(M) \otimes_k F \to M$ is a split epimorphism of $A\otimes_k F$-modules.
\end{proof}
\end{lem}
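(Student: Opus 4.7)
For part (i), my plan is to reduce the hypothesis to a finite subextension. An isomorphism $M \otimes_k F \cong N \otimes_k F$ is encoded, after choosing $F$-bases, by an invertible matrix with entries in $F$; these finitely many entries generate a finite subextension $L \subseteq F$ because $F/k$ is algebraic. The same matrix realises an isomorphism $M \otimes_k L \cong N \otimes_k L$ of $A \otimes_k L$-modules. Restricting scalars along $A \hookrightarrow A \otimes_k L$ identifies $M \otimes_k L$ and $N \otimes_k L$ with $M^{\oplus [L:k]}$ and $N^{\oplus [L:k]}$ respectively, so we obtain $M^{\oplus [L:k]} \cong N^{\oplus [L:k]}$ in $\mod A$. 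Since $M$ and $N$ are finite dimensional, Krull--Schmidt applies and comparing multiplicities of indecomposable summands on both sides forces $M \cong N$.

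For part (ii), the key structural input is that a finite separable field extension $F/k$ is a separable $k$-algebra, equivalently the multiplication map $\mu \colon F \otimes_k F \to F$ admits a splitting as a morphism of $F$-$F$-bimodules. Concretely, there is a separability idempotent $\sum_i a_i \otimes b_i \in F \otimes_k F$ satisfying $\sum_i a_i b_i = 1$ and $\sum_i y a_i \otimes b_i = \sum_i a_i \otimes b_i y$ for all $y \in F$. My plan is to tensor this splitting with $M$ on the appropriate side. The natural map $\varepsilon \colon r(M) \otimes_k F \to M$, $m \otimes x \mapsto x \cdot m$, is easily checked to be $A \otimes_k F$-linear and surjective (evaluate at $x = 1$), and the formula $m \mapsto \sum_i (a_i \cdot m) \otimes b_i$ should then define an $A \otimes_k F$-linear section, with $\varepsilon \circ s = \textnormal{id}_M$ coming directly from $\sum_i a_i b_i = 1$.

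The step that requires the most care is the verification in (ii) that the candidate section is genuinely $A \otimes_k F$-linear rather than merely $A$-linear. The $A$-linearity is immediate because the $A$-action on $r(M)$ commutes with the $F$-action on $M$ inherited from the $A \otimes_k F$-module structure; the delicate part is $F$-linearity, and this uses precisely the bimodule identity satisfied by the separability idempotent — which is exactly where the separability of $F/k$ enters and where the argument would break down for a purely inseparable extension. Everything else in both parts amounts to bookkeeping once these two ingredients (a finite-degree reduction plus Krull--Schmidt for (i), and the separability idempotent for (ii)) are in place.
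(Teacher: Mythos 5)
Your proposal is correct and follows essentially the same route as the paper in both parts: for (i) you reduce to a finite subextension generated by the matrix entries, restrict scalars, and invoke Krull--Schmidt; for (ii) you use separability of $F/k$ to split the multiplication map $F \otimes_k F \to F$ as $F$-$F$-bimodules and then base-change by $M$. The only difference is presentational, in that you unpack the splitting in (ii) via an explicit separability idempotent and verify $A\otimes_k F$-linearity of the section directly, whereas the paper states the split epimorphism of bimodules and applies $M \otimes_F(-)$ in one stroke.
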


\subsection{Main results on schemes of representations}

\begin{thm} \label{thm: main result appendix}

    Let $A$ be a finitely generated $k$-algebra, and let $H \subset |\vMod(A,n)|$ be a constructible subset that is preserved by the action of $GL_n$. Suppose that there is an infinite set of non-isomorphic representations of $A$ of dimension $n$ over $k$ which belong to $H$. Then, there is a locally closed subcurve $C \hookrightarrow \vMod(A,n)$ whose underlying topological space is contained in $H$, a positive integer $m >0$ and a set of closed points $\Sigma' \subset |C|_{\leq m}$ satisfying the following:
    \begin{enumerate}
        \item[(i)] $\Sigma'$ has cardinality $|k|$, and so in particular it is infinite.
        \item[(ii)] The points in $\Sigma'$ are in distinct $GL_n$-orbits.
        \item[(iii)] There are $|k|$-many non-isomorphic indecomposable direct summands of the {$A$-modules} $r(M_x)$, where $M_x$ denotes the $n$-dimensional representation of $A \otimes_k k_x$ corresponding to a point $x\in \Sigma'$ with residue field $k_x$ and $r(M_x)$ equals $M_x$ but viewed as an $A$-module via $A \to A\otimes_k k_x$.  
    \end{enumerate}
\end{thm}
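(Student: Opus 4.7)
The strategy is a two-stage reduction: first apply \Cref{prop: main prop curves} to a suitably chosen $GL_n$-stable piece of $H$ to obtain the curve $C$ and the point set $\Sigma'$ satisfying (i) and (ii); then extract (iii) by combining the orbit characterization of \Cref{lemma: characterization of GL_n orbits} with a pigeonhole argument of size $|k|$.

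First, apply \Cref{lemma: equivariant constructible subsets} to the $GL_n$-stable constructible set $H$, yielding a decomposition $H = \bigsqcup_{i=1}^{\ell} |X_i|$ where the $X_i \subset \vMod(A,n)$ are $GL_n$-stable locally closed subschemes. Note that for two $k$-points $x,y \in \vMod(A,n)(k)$, \Cref{lemma: characterization of GL_n orbits} together with \Cref{lemma: sep extension}(i) (applied to the algebraic extension $\overline{k} \supset k$) shows that $x,y$ lie in the same $GL_n$-orbit if and only if the corresponding $n$-dimensional $A$-modules are isomorphic. Hence the hypothesis ensures that $H$ contains infinitely many distinct $GL_n$-orbits of $k$-points, and by pigeonhole on the finite index set $\{1,\ldots,\ell\}$ some $X_{i_0}$ already contains infinitely many such orbits.

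Second, apply \Cref{prop: main prop curves} to $X_{i_0}$; the required smoothness of all stabilizer group schemes is supplied by \Cref{prop: smooth stabilizers for representation scheme}, and $GL_n$ itself is smooth, connected and affine. This produces a closed subcurve $C \hookrightarrow X_{i_0}$, an integer $m > 0$, and a subset $\Sigma' \subset |C|_{\leq m}$ of cardinality $|k|$ whose points lie in pairwise distinct $GL_n$-orbits. Composing with the locally closed immersion $X_{i_0} \hookrightarrow \vMod(A,n)$ yields a locally closed subcurve of $\vMod(A,n)$ with underlying topological space inside $H$, establishing (i) and (ii).

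For (iii), argue by contradiction. Suppose the set $\mathcal{I}$ of isomorphism classes of indecomposable $A$-modules that occur as direct summands of some $r(M_x)$ with $x \in \Sigma'$ has cardinality strictly less than $|k|$. Since $\dim_k r(M_x) = n \cdot [k_x : k] \leq nm$, the Krull--Schmidt theorem shows that each $r(M_x)$ is determined by a multiset of at most $nm$ elements of $\mathcal{I}$, so the family $\{r(M_x) : x \in \Sigma'\}$ contains fewer than $|k|$ isomorphism classes. By infinite pigeonhole, some fixed $A$-module $N$ satisfies $r(M_x) \cong N$ for $|k|$-many $x \in \Sigma'$. For each such $x$, separability of $k_x / k$ yields $k_x \otimes_k \overline{k} \cong \prod_{\sigma : k_x \hookrightarrow \overline{k}} \overline{k}$, and hence $r(M_x) \otimes_k \overline{k} \cong \bigoplus_{\sigma} M_x \otimes_{k_x, \sigma} \overline{k}$ as $A \otimes_k \overline{k}$-modules. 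In particular, $M_x \otimes_{k_x} \overline{k}$ appears as a direct summand of the fixed module $N \otimes_k \overline{k}$, which admits only finitely many isomorphism classes of direct summands. By \Cref{lemma: characterization of GL_n orbits}, the $GL_n$-orbit of $x$ is determined by the isomorphism class of $M_x \otimes_{k_x} \overline{k}$, so only finitely many orbits can arise among the chosen $|k|$-many $x$, contradicting the fact that they lie in pairwise distinct orbits. The main obstacle is this last step: bridging from an isomorphism of the restricted $k$-modules $r(M_x) \cong r(M_y)$ to coincidence of the $GL_n$-orbits of $x$ and $y$ (up to finite ambiguity), for which the separability of the residue fields and the clean orbit criterion of \Cref{lemma: characterization of GL_n orbits} are essential.
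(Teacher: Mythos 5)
Your proposal is correct and follows essentially the same route as the paper: reduce via \Cref{lemma: characterization of GL_n orbits} and \Cref{lemma: sep extension}(i) to distinct $GL_n$-orbits, decompose $H$ by \Cref{lemma: equivariant constructible subsets}, extract a suitable stratum $X_{i_0}$, and invoke \Cref{prop: main prop curves} together with \Cref{prop: smooth stabilizers for representation scheme} to produce $C$ and $\Sigma'$. For (iii), the paper performs a single pigeonhole on the modules $M_x\otimes_{k_x}\overline{k}$ (each of dimension $n$ over $\overline{k}$, built from at most $|I|$-many indecomposables via \Cref{lemma: sep extension}(ii)), whereas you first pigeonhole on the restricted modules $r(M_x)$ to fix a common $N$, then observe that $N\otimes_k\overline{k}$ has finitely many indecomposable summands; the two variants are equivalent in substance, both hinging on the separability-based splitting of \Cref{lemma: sep extension}(ii) and the orbit criterion.
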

\begin{proof}
    By \Cref{lemma: characterization of GL_n orbits} we have that $x,y$ belong to the same $GL_n$-orbit if and only if the corresponding representations of $A \otimes_k \overline{k}$ obtained by tensoring with $\overline{k}$ are isomorphic. In view of \Cref{lemma: sep extension} (i), we conclude that $x,y$ are isomorphic as representations of $A$ if and only if they lie in the same $GL_n$-orbit. Hence, by our assumption we have that $H$ contains infinitely many $k$-points that lie in distinct $GL_n$-orbits. By \Cref{lemma: equivariant constructible subsets}, there is a finite set $X_1, X_2, \ldots, X_{\ell}$ of disjoint $GL_n$-stable locally closed subschemes of $\vMod(A,n)$ such that $H = \bigsqcup_{i=1}^{\ell} |X_i|$. In particular, at least one of these subschemes $X_i$ must contain infinitely many $k$-points that lie in distinct $GL_n$-orbits. Now, by \Cref{prop: smooth stabilizers for representation scheme}, the hypotheses of \Cref{prop: main prop curves} apply for the scheme $X_i$ equipped with the action of $GL_n$, and hence we may apply \Cref{prop: main prop curves} to obtain a curve $C \hookrightarrow X_i \subset \vMod(A,n)$ that fulfills (i) and (ii). 

    Let $\{M_i\}_{i\in I}$ be a set of representatives of all isomorphism classes of indecomposable direct summands of $r(M_x)$ with $x\in \Sigma'$. Suppose that (iii) does not hold, so $|I| < |k|$. By \Cref{lemma: sep extension} (ii), the $A\otimes_k k_x$-module $M_x$ is isomorphic to a direct summand of $r(M_x) \otimes_k k_x$. Thus, the $A \otimes_k \bar{k}$-module $M_x \otimes_{k_x} \bar{k}$ is isomorphic to a direct summand of $r(M_x) \otimes_k \bar{k}$. It follows that every indecomposable direct summand of $M_x \otimes_{k_x} \bar{k}$ is isomorphic to an indecomposable direct summand of $M_i \otimes_k \bar{k}$ with $i \in I$, which there are $|I|$-many of. Since $|\Sigma'| = |k|> |I|$ and the modules $M_x \otimes_{k_x} \bar{k}$ are of the same dimension over $\bar{k}$, it follows from the pigeonhole principle that $M_x \otimes_{k_x} \bar{k} \cong M_y \otimes_{k_y} \bar{k}$ for distinct $x,y \in \Sigma'$. This contradicts that $x,y$ are in distinct $GL_n$-orbits.    
\end{proof}

\end{document}